\documentclass[11pt, a4paper, reqno]{article}
\usepackage{amsmath,amsthm,amssymb}
\usepackage{graphicx} % Required for inserting images
\usepackage[margin=1.2in]{geometry}
\usepackage[colorlinks=true]{hyperref}
\usepackage{enumitem}
\usepackage{pifont}
\allowdisplaybreaks
\numberwithin{equation}{section}
\numberwithin{figure}{section}

\newtheorem{theorem}{Theorem}[section]
\newtheorem{lemma}{Lemma}[section]
\newtheorem{remark}{Remark}[section]
\newtheorem{corollary}{Corollary}[section]

\newtheorem{proposition}{Proposition}[section]

\makeatother

\usepackage{babel}

\newcommand{\R}{\mathbb R}

\begin{document}
	\title{Global well-posedness for nonlinear generalized Camassa-Holm equation}
	
	\author{Nes\.ibe Ayhan\\
		\textit{Department of Mathematics and Scientific Computing}\\
		\textit{University of Graz}\\
		\texttt{nesibe.ayhan@uni-graz.at}
		\and
		N\.ilay Duruk Mutluba\c{s}\\
		\textit{Faculty of Engineering and Natural Sciences}\\
		\textit{Sabanc\i\ University}\\
		\texttt{nilaydm@sabanciuniv.edu}
		\and
		Bao Quoc Tang\\
		\textit{Department of Mathematics and Scientific Computing}
		\textit{University of Graz}\\
		\texttt{quoc.tang@uni-graz.at}
	}
	\date{}
	%\author{Nes\.ibe Ayhan, N\.ilay Duruk Mutluba\c{s}, Bao Quoc Tang}
	
	%\address{Nes\.ibe Ayhan \hfill\break 
		%Department of Mathematics and Scientific Computing, University of Graz\hfill\break Mozartgasse 14, 8010 Graz, Austria}
	%\email{nesibe.ayhan@uni-graz.at}
	
	% \address{N\.ilay Duruk Mutluba\c{s}\hfill\break
		% Faculty of Engineering and Natural Sciences, Sabanc\i\ University\hfill\break 34956\ Istanbul, T\"urkiye}
	% \email{nilaydm@sabanciuniv.edu}
	
	% \address{Bao Quoc Tang \hfill\break 
		% Department of Mathematics and Scientific Computing, University of Graz\hfill\break Heinrichstrasse 36, 8010 Graz, Austria
		% }
	% \email{quoc.tang@uni-graz.at}
	
	% \keywords{Camassa-Holm equation; nonlinearity; dispersion; local well-posedness; global well-posedness; semigroup
		% theory; commutator estimate; conserved quantity }
	
	\maketitle
	
	\begin{abstract}
		We establish local and global well-posedness for the Cauchy problem of a generalized Camassa-Holm equation where orders of the momentum and the nonlinearity can be arbitrarily high.  More precisely, we  consider the equation
		\begin{equation*}
			m_t + m_x u^p + b m u^{p-1}u_x = -(g(u))_x + (b+1)u^p u_x, \quad m = (1-\partial_x^2)^k u,
		\end{equation*}
		where $p \geq 1$, $k \geq 1$ are arbitrary, $b$ is a real parameter, and $g(u)$ is a smooth function. %The standard Camassa-Holm equation corresponds to $k=1$, $p=1$, $b=2$, and $g(u)=0$.\\
		The local well-posedness is shown by using Kato's semigroup approach, where we treat the nonlinearity directly using commutator estimates and the fractional Leibniz rule without having to transform it in any specific differential form. This well-posedness is obtained in the phase space $H^s$ for $s > 2(k-1) + 3/2$, which is consistent with the results for the classical Camassa-Holm equation.  We also prove the global existence of solutions by obtaining conserved quantity and applying the same idea from our local theory.
	\end{abstract}
	
	% \tableofcontents
	
	\section{Introduction}
	The Camassa-Holm (CH) equation is a fundamental model in the study of nonlinear waves. It is usually written as
	\begin{equation}
		m_t + 2u_x m + u m_x = 0, \quad \text{with} \quad m = u - u_{xx}.
	\end{equation}
	This equation possesses a deeply rich mathematical structure, which includes special solutions called peakons and the phenomenon of wave breaking. It was introduced by Camassa and Holm \cite{camassa1993integrable} to model the unidirectional propagation of shallow water waves over a flat bottom. Over the years, there has been many studies this equation, see e.g. \cite{constantin2001scattering,constantin1998wave,constantin1999shallow,ji2022wave,rodriguez2001cauchy} and its various generalizations the literature, see e.g. \cite{coclite2023classical,daros2019instability,degasperis2002new,gui2008global,escher2006global,escher2007shock,gui2008global,ji2021wave,novikov2009generalizations} and references therein. 
	
	\medskip
	The first approach is to generalize the momentum density $m$. The standard CH equation uses $m=u-u_{xx}$, and the common higher order generalization is to consider $m = (I - \partial_x^2)^k$ for $k\ge 1$. Note that this generalization can be derived from an abstract Euler flow with a suitable $H^k$ metric, see e.g. McLachlan \& Zhang \cite{mclachlan2009well}. The analysis of such modified CH equation has been studied in many works, see e.g. \cite{mu2011well,tian2011global,tang2015well} for single equation or \cite{chen2017well} for coupled systems. There are also other generalizations for the Camassa-Holm equation. For instance, the $\mu-$Camassa-Holm equation uses the momentum density $m=\mu(u)-u_{xx}$, where $\mu$ is the mean of $u$, which was introduced in \cite{khesin2008generalized} and has been studied in other works like \cite{fu2012blow} and \cite{Yamane2020-nk}. Another example is the $\alpha-$Camassa-Holm equation, which uses $m=u-\alpha^2u_{xx}$, where $\alpha$ is a constant. This was considered by Holm, Naraigh \& Tronci \cite{holm2009singular}. In a previous work D. Mutluba\c{s} \& Ayhan \cite{duruk2025local}, a general differential operator $L$ was used to study such dispersive generalizations.
	
	\medskip
	The second approach is to consider general nonlinearity in the equation, that is, instead of the term $2u_x m + u m_x$, which corresponds to $p=1$ in \eqref{gch}, one can study a more general form $bu^{p-1}u)x m + u^pm_x$ with an arbitrary power $p \geq 1$. This generalized family includes several notable integrable equations as particular cases, such as the Degasperis-Procesi equation (with $p=1$, $b=3$) \cite{constantin2009hydrodynamical} and the Novikov equation (with $p=2, b=3$) \cite{zhou2025persistence}. For the case of $k=1$ in the operator, Zhou \& Ji \cite{zhou2022well} established well-posedness and wave breaking for this class of equations with general $p\geq1$ using Kato's semigroup approach.

	\medskip
	Although there are many studies that consider these two directions of generalization, they have typically been developed separately. Our work fills this gap by combining these two directions. More precisely, we study the general case where a higher-order operator and a generalized nonlinearity are present at the same time, allowing us to analyze a much wider class of equations under a single framework. In this work, we investigate the Cauchy problem for the following generalized Camassa-Holm equation
	\begin{equation}
		m_t + m_x u^p + b m u^{p-1}u_x = - (g(u))_x + (b+1)u^p u_x,\quad m = (1-\partial_x^2)^k u
		\label{gch}
	\end{equation}
	where $p \geq 1$, $k \geq 1$ are arbitrary, $b$ is a real parameter, and $g(u)$ is a smooth function. 
	
	\medskip
	Our first objective is to prove local well-posedness for this general equation using Kato's semigroup theory:
	\begin{theorem}
		Let $u_0 \in H^s, s>2(k-1)+3/2$ be given. Then, there exists a maximal time of existence $T>0$, depending on $u_0$, such that there is a unique solution $u$ satisfying
		\begin{equation*}
			u \in C([0, T); H^s) \cap C^1([0, T); H^{s-1}).
		\end{equation*}
		Moreover, the solution $u(t,x)$ depends continuously on the initial data, i.e., the solution mapping
		\begin{equation*}
			\begin{aligned}
				\Psi: H^s &\mapsto C([0, T); H^s) \cap C^1([0, T); H^{s-1})\\
				u_0 &\mapsto u(\cdot, u_0)
			\end{aligned}
		\end{equation*}
		is continuous.
		\label{thm-local}
	\end{theorem}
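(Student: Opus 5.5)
We will apply Kato's theorem for quasilinear evolution equations $\frac{du}{dt}+A(u)u=f(u)$, $u(0)=u_0$, with $X=H^{s-2(k-1)-1}$ and $Y=H^s$ and the isomorphism $Q=\Lambda^{2k-1}$, where $\Lambda=(1-\partial_x^2)^{1/2}$. With this choice $QYX^{-1}$-type bookkeeping works, since $Y=Q^{-1}X$ up to the shift $s\mapsto s-(2k-1)$; we adjust indices so that $Y\hookrightarrow X$ densely, with the operator $Q:Y\to X$ an isomorphism.

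\textbf{Step 1 (reformulation).} Apply $\Lambda^{-2k}$ to \eqref{gch}. Writing $m_x u^p=\partial_x(u^p m)-pu^{p-1}u_x m$, the equation becomes $u_t+u^pu_x=f(u)$, where
\[
f(u)=-\Lambda^{-2k}\Big(\big[\Lambda^{2k},u^p\big]u_x+(b-p+\dots)\,u^{p-1}u_x\Lambda^{2k}u+\partial_x g(u)-(b+1)u^pu_x\Big).
\]
More precisely, $\Lambda^{-2k}(u^p\partial_x\Lambda^{2k}u)=u^pu_x+\Lambda^{-2k}[u^p,\Lambda^{2k}]u_x$. Thus $A(u)=u^p\partial_x$, and $f$ collects the commutator together with $\Lambda^{-2k}(b\,u^{p-1}u_x\, m)$ and the lower-order terms. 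Rather than rewriting these terms in divergence form, we estimate them directly: the commutator $[\Lambda^{2k},u^p]$ has order $2k-1$ (Kato–Ponce/Calderón commutator estimates), and the product $u^{p-1}u_x m$ is handled by the fractional Leibniz rule. The target bounds are $\|f(u)\|_Y\le C(\|u\|_Y)$ and $\|f(u)-f(v)\|_X\le C\|u-v\|_X$, together with the analogous Lipschitz bound in $Y$.

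\textbf{Step 2 (Kato's hypotheses for $A(u)=u^p\partial_x$).} We verify three things:
\begin{enumerate}
\item[(i)] $A(u)$ is quasi-$m$-accretive in $X$. This follows from $\langle u^p\partial_x w,w\rangle_{X}\ge -C\|\partial_x(u^p)\|_{L^\infty}\|w\|_X^2$, obtained by a commutator estimate, together with the usual range argument. The needed bound $u^p\in W^{1,\infty}$ holds because $s>3/2$.
\item[(ii)] $QA(u)Q^{-1}=A(u)+B(u)$ with $B(u)=[Q,u^p\partial_x]Q^{-1}$ bounded on $X$, via the Kato–Ponce commutator estimate $\|[\Lambda^r,h]\partial_x w\|_{L^2}\lesssim \|h\|_{H^{r\vee \sigma}}\|w\|_{H^{r}}$ with $\sigma>3/2$.
\item[(iii)] $A(u)$ is bounded from $Y$ to $X$ and Lipschitz in $u$, which follows from product estimates in $H^{\sigma}$ with $\sigma>1/2$.
\end{enumerate}
Since $p$ may be non-integer, $u^p$ is only as smooth as allowed. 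We therefore either restrict to integer $p$, or use composition estimates $\|F(u)\|_{H^r}\le C(\|u\|_{L^\infty})\|u\|_{H^r}$ valid for smooth $F$. The same applies to $g$.

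\textbf{Main obstacle.} The key difficulty is the estimate of $f$ in $Y=H^s$ at the low regularity $s>2(k-1)+3/2$. The term $\Lambda^{-2k}(u^{p-1}u_x\Lambda^{2k}u)$ requires controlling $\|u^{p-1}u_x\,m\|_{H^{s-2k}}$ where $m\in H^{s-2k}$, and $s-2k$ may be negative: it exceeds $-1/2$ but can fall below zero. We will try duality. For $-1/2<\sigma<0$ and $\sigma+r>0$ with $r>1/2$, the product rule $\|hw\|_{H^\sigma}\lesssim\|h\|_{H^{r}}\|w\|_{H^\sigma}$ applies with $h=u^{p-1}u_x\in H^{s-1}$ and $s-1>1/2$. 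The commutator term is treated similarly: $\|[\Lambda^{2k},u^p]u_x\|_{H^{s-2k}}$ is bounded by $\|u\|_{H^s}^{p}\|u\|_{H^{s}}$ using a negative-index commutator estimate. This is exactly why the threshold $s>2k-1/2$ appears. The Lipschitz estimates in $X$ rely on the same tools, one index lower.

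\textbf{Step 3.} With these hypotheses verified, Kato's theorem yields existence, uniqueness, the regularity $u\in C([0,T);H^s)\cap C^1([0,T);H^{s-1})$, and continuous dependence of the solution on $u_0$. The maximal time $T$ is obtained by the standard continuation argument.
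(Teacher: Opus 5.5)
There is a genuine gap, and it comes from your choice of spaces $X=H^{s-2k+1}$, $Q=\Lambda^{2k-1}$. For $k=1$ this coincides with the paper's choice, but for $k\ge2$ it breaks assumption (A4). The $X$-Lipschitz condition becomes $\|f(u)-f(v)\|_{H^{s-2k+1}}\lesssim\|u-v\|_{H^{s-2k+1}}$ on $H^s$-balls. After removing $\Lambda^{-2k}$, this means estimating the bracket in $H^{s-4k+1}$, which is $2k-1$ derivatives below $Y$, not ``one index lower.''

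Your term-by-term strategy then needs, for instance,
\[
\|u^{p-1}u_x\,\Lambda^{2k}(u-v)\|_{H^{s-4k+1}}\lesssim \|u^{p-1}u_x\|_{H^{s-1}}\|u-v\|_{H^{s-2k+1}},
\]
that is, multiplication by an $H^{s-1}$ function acting on $H^{s-4k+1}$. Lemma~\ref{leibniz} requires $s-4k+1>-(s-1)$, i.e.\ $s>2k$. This is not just a weakness of the lemma: for $2k-\tfrac12<s<2k$ the bound is false. Take $h_N=N^{1-s}\phi(x)\cos(Nx)$ and $w_N=N^{2k-1-s}\phi(x)\cos(Nx)$ with $\phi\in C_c^\infty$. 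Both have norm $\sim1$, in $H^{s-1}$ and $H^{s-2k+1}$ respectively. However, $h_N\Lambda^{2k}w_N$ contains the low-frequency piece $\approx\tfrac12 N^{4k-2s}\phi^2$, so $\|\Lambda^{-2k}(h_N\Lambda^{2k}w_N)\|_{H^{s-2k+1}}\gtrsim N^{4k-2s}\to\infty$.

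The commutator piece $[\Lambda^{2k},u^p-v^p]v_x$ has the same problem. Its top-order part is $\partial_x^{2k}(u^p-v^p)\,v_x$, and $\partial_x^{2k}(u^p-v^p)$ is only controlled in $H^{s-4k+1}$. Rescuing your $X$ would require exhibiting cancellations between $[\Lambda^{2k},u^p]u_x$ and $b\,u^{p-1}u_x m$ (a divergence-type restructuring), which you explicitly set aside. Your ``main obstacle'' paragraph only treats the $Y$-bounds, which are fine; the regularity threshold actually bites in the $X$-Lipschitz estimate. Separately, Kato's theorem with your $X$ only gives $C^1([0,T);H^{s-2k+1})$. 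Reaching $C^1([0,T);H^{s-1})$ and the stated continuous dependence needs an extra argument via the equation.

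The paper avoids this by taking $X=H^{s-1}$, $Y=H^s$, $Q=\Lambda$. The worst products in the $X$-Lipschitz estimate are then of the type $H^{s-1}\cdot H^{s-2k-1}\subset H^{s-2k-1}$, for example $\partial_x(v^p)\,\partial_x^{2k}(u-v)$ or $\partial_x^{2k}(u^p-v^p)\,\partial_xv$. These need only $s>k+1$, which follows from $s>2k-\tfrac12$ when $k\ge2$. The operator hypotheses become:
\begin{itemize}
\item boundedness and Lipschitz continuity of $B(u)=[\Lambda,u^p]\Lambda^{-1}\partial_x$ on $H^{s-1}$, via Lemma~\ref{estimate};
\item quasi-$m$-accretivity of $u^p\partial_x$ in $H^{s-1}$, obtained by conjugating with $\Lambda^{s-1}$ and a bounded perturbation.
\end{itemize}
With this $X$, the tools you list (fractional Leibniz and commutator estimates) suffice, and Kato's theorem directly yields $u\in C([0,T);H^s)\cap C^1([0,T);H^{s-1})$.
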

	We prove this theorem by rewriting the equation in the quasi-linear form
	\begin{equation*}
		u_t+A(u)u=f(u),
	\end{equation*}
	then applying Kato's semigroup method. This is done by verifying key properties of the operator $A(u)$ and the nonlinearity $f(u)$. In many previous studies, a standard technique involved a tricky manipulation of the equation to express the nonlinear term $f(u)$ as a single spatial derivative of a certain $F$, $f(u)=\partial_x(F(u))$. This reformulation is highly effective, as it significantly simplifies the proof of the Lipschitz estimates needed for $f(u)$. For equations where either $k=1$ or $p=1$, such a manipulation is indeed possible. However, since we combine both $k>1$ and $p>1$, the algebraic structure becomes too complicated and it seems not possible to manipulate the terms into this convenient $\partial_x(F(u))$ form.
	
	\medskip
	Our work therefore takes a different approach.  The core of our idea lies in breaking down these terms directly and using the fractional Leibniz rule and some commutator estimates. This technique is particularly helpful because it allows us to eliminate the highest-order derivatives that would otherwise be problematic. This strategy of using commutators to handle higher order derivative terms was used in the recent work of D. Mutluba\c{s} \& Ayhan \cite{duruk2025local}, which we extend here to address the more general structure. Through this direct and systematic approach, we establish all necessary bounds for the local well-posedness proof in the Sobolev space $H^s(\mathbb{R})$ for $s > 2(k-1) + 3/2$. For the standard CH equation where $k=1$, this regularity requirement reduces to $s > 3/2$, demonstrating that our result aligns naturally with the classical theory. Moreover, this result is also a clear improvement over the previous work \cite{duruk2025local}, which required the stronger regularity $s > 2(k-1) + 5/2$.\\
	
	Our second objective is to establish the global existence of solutions when $b = p+1$.
	\begin{theorem}
		Let $k\geq2, p\geq 1$, and $b=p+1$. Assume that $u_0 \in H^s(\R)$ for $s>2(k-1)+3/2$, and $m_0 \in L^2(\R)$. Then, the Cauchy problem for the generalized Camassa-Holm equation \eqref{gch} admits a unique global solution
		\begin{equation*}
			u \in C([0, \infty), H^s(\R)) \cap C^{1}([0, \infty), H^{s-1}(\R)),
		\end{equation*}
		and the solution depends continuously on the initial data.
		\label{global-wellposedness}
	\end{theorem}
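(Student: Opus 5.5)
The plan is to find a conserved quantity when $b=p+1$, use it to get a uniform $L^\infty$ (indeed $W^{1,\infty}$) bound on $u$, and then rerun the energy estimates from the local theory to show the $H^s$ norm cannot blow up in finite time. Theorem \ref{thm-local} then extends the solution globally.

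\textbf{Step 1: the conserved quantity.} Multiply \eqref{gch} by $u$ and integrate over $\R$. Since $m=(1-\partial_x^2)^k u$ is self-adjoint, $\int m_t u\,dx = \frac12\frac{d}{dt}\int u\,(1-\partial_x^2)^k u\,dx = \frac12\frac{d}{dt}\sum_{j=0}^k\binom{k}{j}\|\partial_x^j u\|_{L^2}^2 =: \frac12\frac{d}{dt}E(u)$. Integrating by parts,
\[
\int (m_x u^p + b\,m u^{p-1}u_x)u\,dx = \int m_x u^{p+1}+ b\, m u^p u_x = (b-(p+1))\int m u^p u_x\,dx ,
\]
which vanishes exactly when $b=p+1$. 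The right-hand side gives $-\int g(u)_x u\,dx = \int g(u)u_x\,dx = \int \partial_x G(u)\,dx = 0$ with $G'=g$, and $(b+1)\int u^{p+1}u_x\,dx=0$. Hence $E(u(t))=E(u_0)$, i.e. $\|u(t)\|_{H^k}\simeq\|u_0\|_{H^k}$. The computation is justified first for smooth data (or $s$ large) and then by density and the continuous dependence in Theorem \ref{thm-local}. Since $k\ge2$, Sobolev embedding gives $\|u\|_{L^\infty}+\|u_x\|_{L^\infty}\le C\|u\|_{H^k}\le C(u_0)$ uniformly in time. This is where $k\ge 2$ is used. The hypothesis $m_0\in L^2$ is equivalent to $u_0\in H^{2k}$; I would use it only to make the formal computation rigorous, or to bound $E$ finitely. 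For $s\ge k$, $E(u_0)$ is finite anyway.

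\textbf{Step 2: a blow-up criterion.} Following the local theory, write the equation as $u_t+u^pu_x = f(u)$ with $f(u) = (1-\partial_x^2)^{-k}[\ldots]$ built from the remaining terms. Apply $\Lambda^s=(1-\partial_x^2)^{s/2}$, pair with $\Lambda^s u$, and use the Kato--Ponce commutator estimate on $[\Lambda^s,u^p]\partial_x u$. This gives
\[
\frac{d}{dt}\|u\|_{H^s}^2\le C\big(\|u\|_{L^\infty},\|u_x\|_{L^\infty}\big)\,\|u\|_{H^s}^2 .
\]
The key point is that the nonlinearity $f(u)$ gains $2k$ derivatives from $(1-\partial_x^2)^{-k}$ while the terms inside lose at most $2k$ derivatives on $u$. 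Using the fractional Leibniz rule and the same commutator decompositions as in the local Lipschitz estimates, the terms carrying the highest derivatives (e.g. $u^{p-1}u_x\partial_x^{2k}u$) are rewritten as commutators $[(1-\partial_x^2)^k, u^{p-1}u_x]u$ plus lower-order pieces. These can then be bounded by $\|u\|_{W^{1,\infty}}$ times $\|u\|_{H^s}$, with all other factors controlled by $\|u\|_{H^k}$ and hence conserved.

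\textbf{Step 3: conclusion.} Gronwall's inequality with the uniform bound from Step 1 gives $\|u(t)\|_{H^s}\le \|u_0\|_{H^s}e^{Ct}$ on any finite interval. The maximal time of Theorem \ref{thm-local} is therefore infinite, and uniqueness and continuous dependence are inherited from the local result.

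\textbf{Main obstacle.} I expect Step 2 to be the hard part. The difficulty is ensuring that every term of $f(u)$ is bounded linearly in $\|u\|_{H^s}$ with a coefficient depending only on $\|u\|_{W^{1,\infty}}$ and $\|u\|_{H^k}$, without losing derivatives. Where this fails, I would fall back on the coarser estimate with coefficient depending on $\|u\|_{H^{s_0}}$ for some $s_0\le k$ that is still controlled by the conservation law. I would handle this first for the top-order products before the lower-order ones.
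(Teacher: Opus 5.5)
Your Step 1 matches the paper: $\int um\,dx$ is conserved when $b=p+1$, which gives uniform bounds on $\|u\|_{H^k}$, $\|u\|_{L^\infty}$ and $\|u_x\|_{L^\infty}$. The gap is Step 2. It carries the whole proof but is only asserted, and none of the tools you name gives $\|f(u)\|_{H^s}\le C\|u\|_{H^s}$ with $C$ controlled by the conserved quantity.

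\begin{itemize}
\item The estimates from the local Lipschitz proof are not tame. For the top-order part of $\Gamma^{-1}(u^{p-1}u_x\Gamma u)$ they give $\|\partial_x(u^p)\|_{H^{s-2}}\|\partial_x^{2k}u\|_{H^{s-2k}}\lesssim C\|u\|_{H^{s-1}}\|u\|_{H^s}$. Since $s-1>k$ for every admissible $s$, this is superlinear in uncontrolled norms and Gronwall does not close.
\item Lemma \ref{product} applied to $u^{p-1}u_x\,\partial_x^{2k}u$ or $\partial_x^j(u^p)\,\partial_x^{2k+1-j}u$ in $H^{s-2k}$ needs $\partial_x^{2k}u$ in $L^\infty$, or $\partial_x^{2k-1}u$ after moving one derivative. $H^k$ does not control either.
\item Your commutator rewriting goes the wrong way. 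You have $u^{p-1}u_x\Gamma u=\Gamma(u^pu_x)-[\Gamma,u^{p-1}u_x]u$, and the commutator contains $\partial_x^{2k}(u^{p-1}u_x)\,u$, whose $H^{s-2k}$ norm needs $u\in H^{s+1}$. The original product is of admissible order; the pieces are not.
\item Your fallback, with coefficients depending on $\|u\|_{H^{s_0}}$ for $s_0\le k$, does not rescue $s>2k$. With these lemmas a term like $\partial_x^k(u^p)\,\partial_x^{k+1}u$ in $H^{s-2k}$ cannot be split as an $H^k$-controlled factor times $\|u\|_{H^s}$.
\end{itemize}

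The paper closes this hole with the hypothesis you dismissed as a technicality, $m_0\in L^2$. Multiplying the equation by $m$, the cubic terms reduce to $(\frac p2+1)\int u^{p-1}u_x m^2\,dx$ plus terms linear in $m$, all controlled by the $W^{1,\infty}$ bound. This gives $\|m(t)\|_{L^2}\le e^{C_0t}\|m_0\|_{L^2}$, and interpolation with the $H^k$ bound then gives $\|u(t)\|_{H^{2k}}\le C(t)$. That bound controls $\|\partial_x^j u\|_{L^\infty}$ for $j\le 2k-1$. With the splitting $\partial_x(u^p)\partial_x^{2k}u=\partial_x[\partial_x(u^p)\partial_x^{2k-1}u]-\partial_x^2(u^p)\,\partial_x^{2k-1}u$, Lemma \ref{product} then yields $\|f(u)\|_{H^s}\le C(t)\|u\|_{H^s}$.

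Your strategy can be completed, and for $2k-\frac12<s<2k$ even without $m_0\in L^2$. Take $s_0=\min\{s,2k\}$ and apply Lemma \ref{leibniz} so that one factor of each product is measured in an $H^k$-controlled norm. For example:
\begin{itemize}
\item $\|u^{p-1}u_x\,\partial_x^{2k}u\|_{H^{s_0-2k}}\lesssim\|u^{p-1}u_x\|_{H^{k-1}}\|u\|_{H^{s_0}}$;
\item $\|\partial_x^k(u^p)\,\partial_x^{k+1}u\|_{H^{s_0-2k}}\lesssim\|u^p\|_{H^{k}}\|u\|_{H^{s_0}}$.
\end{itemize}
These use $2k-\frac12<s_0\le 2k$ and $k\ge 2$. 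Together they give $\|f(u)\|_{H^{s_0}}\le C(\|u\|_{H^k})\|u\|_{H^{s_0}}$, hence a global $H^{s_0}$ bound. For $s\le 2k$ this finishes the proof. For $s>2k$ it is exactly the $H^{2k}$ bound the paper's final estimate needs. That argument, however, is not in your proposal.
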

	We achieve this by identifying a conserved quantity that provides a uniform control on the solution, which explains the requirement $p = b+1$. To extend this global persistence to higher regularity Sobolev spaces $H^s$ with 
	$s>k$, we utilize the idea from our local theory. The commutator estimates and fractional Leibniz rule allow us to control the growth of the $H^s$-norm, leading to a global bound via Gronwall's lemma.
	
	\medskip
	\textbf{This paper is organized as follows}: In Section \ref{local}, we give some preliminaries and obtain the local well-posedness of solutions by using Kato's semigroup theory. In Section \ref{global}, we establish some conservation laws and uniform bounds and obtain the global existence of solutions.
	
	\medskip
	\textbf{Notation}: In this paper, we use the following notation:
	\begin{itemize}
		\item The Sobolev spaces $L^p(\R)$ and $H^{s}(\R)$, $s\in \mathbb R$, are defined through the norms
		\begin{equation*}
			\|u\|_{L^p}:= \|u\|_{L^p(\R)} = \left(\int_{\R}|u(x)|^pdx\right)^{1/p} \quad \text{ for } \quad 1 \le p < \infty,
		\end{equation*}
		\begin{equation*}
			\|u\|_{L^\infty}:= \|u\|_{L^{\infty}(\R)}:= \text{ess sup}\{|f(x)|: x\in \R\},
		\end{equation*}
		and
		\begin{equation*}
			\|u\|_{H^s}:= \|u\|_{H^s(\R)}:= \|(1+|\xi|^2)^{s/2}\widehat{f}(\cdot)\|_{L^2(\R)}
		\end{equation*}
		where $\widehat{f}(\xi)$ is the one dimensional Fourier transform of $f$.
		\item We denote by $\Lambda:= (1 - \partial_x^2)^{1/2}$ and for $s\in \R$, we have the following norms equivalence
		\begin{equation*}
			\|u\|_{H^s} \sim \|\Lambda^{s} u\|_{L^2}.
		\end{equation*}
	\end{itemize}
	\section{Local well-posedness}\label{local}
	\subsection{Preliminaries}
	Consider the Cauchy problem for the quasi-linear equation of evolution:
	\begin{equation}
		\frac{du}{dt}+A(u)u=f(u), \quad t\geq0, \quad u(0)=u_{0}.
		\label{general-form}
	\end{equation}
	Let $X$ and $Y$ be Hilbert spaces such that $Y$ is continuously and densely embedded in $X$, and let $Q: Y \mapsto X$ be a topological isomorphism. We suppose the following assumptions on the operator $A$ and nonlinearity $f$:
	
	\medskip
	$(A1)$ %Let $B_{r}(0)$ be an open ball centered in the origin in $Y$ with radius $r>0$. 
	For each $u\in Y$, the linear operator $-A(u): X \mapsto X$ generates a $C_{0}$-semigroup $T(t)$ on $X$ which satisfies
	\begin{equation*}
		\|T(t)\|_{\mathcal{L}(X)} \leq e^{\beta t}, \forall t\in[0, \infty),
	\end{equation*}
	for a constant $\beta > 0$.
	
	\medskip
	$(A2)$ For any $u \in Y$, $A(u)$ is a bounded linear operator from $Y$ to $X$, i.e, $A(u) \in \mathcal{L}(Y,X)$ with
	\begin{equation*}
		\|(A(u)-A(v))w\|_{X} \leq \lambda_{1} \|u-v\|_{X}\|w\|_{Y}, \quad u,v,w \in X.
	\end{equation*}

	\medskip
	$(A3)$ For any $u\in Y$, there exists a bounded linear operator $B(u) \in \mathcal{L}(X)$ satisfying $B(u) = QA(u)Q^{-1}-A(u)$, where $B:Y \mapsto \mathcal{L}(X)$ is uniformly bounded on bounded sets in $Y$. Moreover,
	\begin{equation*}
		\|(B(u)-B(v))w\|_{X} \leq \lambda_{2}\|u-v\|_{Y}\|w\|_{X}, \quad u,v \in Y, w\in X.
	\end{equation*}
	
	\medskip
	$(A4)$ $f$ is uniformly bounded on bounded sets in $Y$. Moreover, the map $f: Y \mapsto Y$ is locally $X$-Lipschitz continuous in the sense that there exists a constant $\lambda_{3}>0$ such that
	\begin{equation*}
		\|f(u)-f(v)\|_{X} \leq \lambda_{3}\|u-v\|_{X},\ \ \forall u,v \in B_{r}(0) \subseteq Y,
	\end{equation*}
	and locally $Y$-Lipschitz continuous in the sense that there exists a constant $\lambda_{4}>0$ such that
	\begin{equation*}
		\|f(u)-f(v)\|_{Y} \leq \lambda_{4}\|u-v\|_{Y},\ \ \forall u,v \in B_{r}(0) \subseteq Y,
	\end{equation*}
	where $B_r(0)$ is the open ball centered at $0$ in $Y$.%, and $\lambda_{1}, \lambda_{2}, \lambda_{3},$ and $\lambda_{4}$ depend only on $r$.
	\begin{theorem}\label{Kato} (\textit{Kato's Semigroup Theorem})\\
		Assume that $(A1)-(A4)$ hold. For given $u_{0} \in Y$, there is a $T>0$, depending on $u_{0}$, and a unique solution $u$ to \eqref{general-form} such that
		\begin{equation*}
			u=u(\cdot,u_{0}) \in C([0, T), Y) \cap C^{1}([0, T), X).
		\end{equation*}
		Moreover, the solution depends continuously on the initial data, i.e, the map \mbox{$u_{0} \mapsto u(\cdot, u_{0})$} is continuous from $Y$ to $C([0, T), Y) \cap C^{1}([0, T), X)$.
		\label{kato}
	\end{theorem}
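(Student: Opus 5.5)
This is Kato's classical theorem for quasilinear evolution equations, so the plan is to reproduce the standard argument of Kato (1975) in the setting $(A1)$--$(A4)$. The proof has two parts: a linear theory of evolution operators, then a fixed-point argument on the nonlinear problem.

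\emph{Linear theory.} Fix $r>0$ with $\|u_0\|_Y<r$ and a time horizon $T>0$ to be chosen. Let $E$ be the set of $v\in C([0,T];X)$ with $\|v(t)\|_Y\le r$ for all $t$, equipped with the $C([0,T];X)$ metric. Since $Y$ is reflexive, $E$ is complete.

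For $v\in E$ the family $t\mapsto A(v(t))$ is stable on $X$ with constants $(1,\beta)$ by $(A1)$. It is continuous in $\mathcal L(Y,X)$ by $(A2)$. By $(A3)$ we have
\begin{equation*}
QA(v(t))Q^{-1}=A(v(t))+B(v(t)),
\end{equation*}
with $B(v(t))$ bounded in $\mathcal L(X)$ uniformly for $v\in E$. This makes $Y$ an admissible space for the family. Kato's theory of linear evolution equations of hyperbolic type then produces an evolution operator $U^v(t,s)$ on $X$ with $\|U^v(t,s)\|_{\mathcal L(X)}\le e^{\beta(t-s)}$. It maps $Y$ into $Y$ with $\|U^v(t,s)\|_{\mathcal L(Y)}\le C e^{\beta'(t-s)}$, where $\beta'$ is controlled by $\sup\|B\|$ and the constants depend only on $r$. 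It also satisfies $\partial_t U^v(t,s)w=-A(v(t))U^v(t,s)w$ for $w\in Y$.

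\emph{Fixed point.} Define
\begin{equation*}
\Phi(v)(t)=U^v(t,0)u_0+\int_0^t U^v(t,\tau)f(v(\tau))\,d\tau .
\end{equation*}
Using the $Y$-bound on $U^v$ and the boundedness of $f$ on $B_r(0)$ from $(A4)$, we get
\begin{equation*}
\|\Phi(v)(t)\|_Y\le Ce^{\beta' T}\bigl(\|u_0\|_Y+T\sup\|f\|_Y\bigr)\le r
\end{equation*}
for $T$ small, after possibly enlarging $r$ relative to $\|u_0\|_Y$ (taking $Q$-norms makes the leading constant essentially $1$).

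For the contraction, use the perturbation identity
\begin{equation*}
U^v(t,s)w-U^{\tilde v}(t,s)w=\int_s^t U^v(t,\tau)\bigl(A(\tilde v(\tau))-A(v(\tau))\bigr)U^{\tilde v}(\tau,s)w\,d\tau .
\end{equation*}
Together with $(A2)$, this gives
\begin{equation*}
\|U^v(t,s)w-U^{\tilde v}(t,s)w\|_X\le C\lambda_1 T\|v-\tilde v\|_{C(X)}\|w\|_Y .
\end{equation*}
Combining this with the $X$-Lipschitz bound in $(A4)$ yields
\begin{equation*}
\|\Phi(v)-\Phi(\tilde v)\|_{C(X)}\le CT\|v-\tilde v\|_{C(X)},
\end{equation*}
so $\Phi$ is a contraction for small $T$. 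Its fixed point $u$ solves the equation in $X$.

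\emph{Regularity and uniqueness.} The fixed point satisfies $u\in L^\infty(0,T;Y)\cap C([0,T];X)$. Upgrading to $u\in C([0,T);Y)$ is done with the operator $B$. Set $w=Qu$; it solves
\begin{equation*}
w_t+(A(u)+B(u))w=Qf(u).
\end{equation*}
This is a linear problem in $X$ with coefficients continuous in the right sense, since $(A3)$ gives $Y$-Lipschitz dependence of $B$. Hence $w\in C([0,T];X)$, i.e.\ $u\in C([0,T];Y)$. Then $u_t=-A(u)u+f(u)\in C([0,T];X)$.

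Uniqueness follows from the same $X$-difference estimate and Gronwall's inequality. The maximal time is obtained by the usual continuation argument.

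\emph{Continuous dependence.} Continuity of $u_0\mapsto u$ into $C([0,T);Y)$ is the main obstacle. The difference estimates only give Lipschitz continuity in the weaker $X$-norm, and $Y$-continuity requires the Bona--Smith type argument of Kato. The plan is to mollify the data, approximate $Q u_0$ in $X$, and use the $Y$-Lipschitz bound on $f$ and the $Y$-Lipschitz bound on $B$ in $(A3)$. These control $Q(u^n-u)$ through the linear equation for $w$ with perturbed coefficients. One then passes to the limit via strong continuity of the evolution operators (dominated convergence in the Duhamel formula). Everything else is routine once Kato's linear evolution operator theorem is available.
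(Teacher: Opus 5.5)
The paper does not prove this statement. It is Kato's 1975 theorem quoted as a black box, so your outline (linear evolution operators, contraction in $C([0,T];X)$ on a $Y$-bounded set, then $Y$-regularity and continuous dependence) is the standard argument behind that citation. The existence and contraction part is fine at this level.

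The step that fails as written is the upgrade to $u\in C([0,T];Y)$. At that point you only know $u\in L^\infty(0,T;Y)\cap C([0,T];X)$. The $Y$-Lipschitz bound in (A3), $\|B(u(t))-B(u(s))\|_{\mathcal L(X)}\le\lambda_2\|u(t)-u(s)\|_Y$, makes the coefficient continuous only if $u$ is already $Y$-continuous, so the argument is circular. The repair uses tools you already invoked:
\begin{itemize}
\item Kato's linear theory needs $t\mapsto B(v(t))$ to be only bounded and strongly measurable. You need this anyway to build $U^v$ for $v\in E$, where $B(v(t))$ is not continuous either. Measurability follows, e.g., from weak continuity of $Q(\lambda+A(v(t)))^{-1}Q^{-1}$ and separability of $X$.
\item The theory then gives $QU^u(t,s)Q^{-1}$ as the evolution operator of $A(u(t))+B(u(t))$ on $X$, so $U^u$ is strongly continuous on $Y$.
\item Then $u\in C([0,T];Y)$ follows directly from the Duhamel formula, once $\tau\mapsto f(u(\tau))$ is strongly measurable in $Y$. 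It is $X$-continuous and $Y$-bounded, hence weakly continuous in the reflexive $Y$, and Pettis applies.
\end{itemize}
The $Y$-Lipschitz condition on $B$ is not needed for existence at all.

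Your continuous-dependence sketch has the same problem. If you compare the evolution operators of $A(u^n)+B(u^n)$ and $A(u)+B(u)$, their strong convergence needs $B(u^n(t))\to B(u(t))$, which is the $Y$-convergence you are trying to prove. Kato's device is to keep only $A$ in the propagator and move $B$ to the right-hand side. With $w^n=Qu^n$, $w=Qu$, and $U_n$, $U$ the evolution operators of $A(u^n(t))$, $A(u(t))$ on $X$,
\[
w^n(t)=U_n(t,0)w^n(0)+\int_0^t U_n(t,\tau)\bigl(-B(u^n)w^n+Qf(u^n)\bigr)(\tau)\,d\tau ,
\]
and similarly for $w$. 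Your contraction estimate gives $u^n\to u$ in $C([0,T];X)$ on a common interval. Then (A2) gives $A(u^n)\to A(u)$ in $\mathcal L(Y,X)$ uniformly in $t$, hence $U_n\to U$ strongly on $X$, uniformly in $(t,s)$, by the perturbation identity on $Y$ plus density. Write $-B(u^n)w^n+B(u)w=-B(u^n)(w^n-w)-(B(u^n)-B(u))w$ and use the $Y$-Lipschitz bounds on $B$ and $f$. You get
\[
\|w^n(t)-w(t)\|_X\le C\|Q(u^n_0-u_0)\|_X+\varepsilon_n+C\int_0^t\|w^n(\tau)-w(\tau)\|_X\,d\tau ,
\]
where $\varepsilon_n\to0$ collects the terms in which $U_n-U$ acts on the fixed data $w(0)$ and $-B(u)w+Qf(u)$ (by dominated convergence). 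Gronwall closes this, so no mollification or Bona--Smith argument is needed. Convergence in $C^1([0,T];X)$ then follows from the equation via (A2) and (A4).
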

	The following lemmas also will be useful for our main results:
	\begin{lemma}(Fractional Leibniz Rule)\ \cite{kato2006quasi}
		Let $\beta, \alpha$ be real numbers such that $-\beta < \alpha \leq \beta$. Then
		\begin{align*}
			&\|fg\|_{H^\alpha} \leq c \|f\|_{H^\beta} \|g\|_{H^\alpha}, \quad \text{if}\quad \beta > 1/2,&\\
			&\|fg\|_{H^{\alpha + \beta - \frac{1}{2}}} \leq c \|f\|_{H^\beta} \|g\|_{H^\alpha}, \quad \text{if}\quad \beta < 1/2,
		\end{align*}
		where $c$ is a positive constant depending only on  $\alpha$ and $\beta$.
		\label{leibniz}
	\end{lemma}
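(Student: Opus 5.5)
This statement is the fractional Leibniz rule quoted from \cite{kato2006quasi}, and I will prove it on the Fourier side, treating $f$ and $g$ as Schwartz functions and then extending by density. With the notation $\langle\xi\rangle=(1+|\xi|^2)^{1/2}$, we have $\widehat{fg}=\widehat f*\widehat g$. Setting $F=\langle\cdot\rangle^{\beta}|\widehat f|$ and $G=\langle\cdot\rangle^{\alpha}|\widehat g|$, the claim reduces, by duality against $H\in L^2$, to a bound for the trilinear form
\[
I=\iint \frac{\langle\xi\rangle^{\gamma}}{\langle\eta\rangle^{\beta}\langle\xi-\eta\rangle^{\alpha}}\,F(\eta)G(\xi-\eta)H(\xi)\,d\eta\,d\xi .
\]
Here $\gamma=\alpha$ in the first inequality and $\gamma=\alpha+\beta-\tfrac12$ in the second. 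The goal is $I\le c\|F\|_{L^2}\|G\|_{L^2}\|H\|_{L^2}$.

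\textbf{Frequency splitting.} I split the integration region into two parts. On the part where $|\xi-\eta|\ge|\eta|$, we have $\langle\xi\rangle\lesssim\langle\xi-\eta\rangle$. On the part where $|\eta|>|\xi-\eta|$, we have $\langle\xi\rangle\lesssim\langle\eta\rangle$.

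\textbf{Case $\beta>1/2$.}
\begin{itemize}
\item First region, with $\alpha\ge0$: the weight is bounded by $\langle\eta\rangle^{-\beta}$. Young's inequality then gives $I\lesssim\|\langle\cdot\rangle^{-\beta}F\|_{L^1}\|G\|_{L^2}\|H\|_{L^2}$, and Cauchy--Schwarz bounds the $L^1$ norm by $\|F\|_{L^2}$, since $\langle\cdot\rangle^{-\beta}\in L^2$ exactly when $\beta>1/2$.
\item Second region, with $\alpha\ge0$: the weight is at most $\langle\eta\rangle^{\alpha-\beta}\langle\xi-\eta\rangle^{-\alpha}\le\langle\xi-\eta\rangle^{-\beta}$, because $\alpha\le\beta$ and $|\xi-\eta|<|\eta|$. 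The same argument applies with the roles of $F$ and $G$ swapped.
\item $\alpha<0$: I use duality, since multiplication by $f$ is (up to conjugation) self-adjoint. Because $H^{\alpha}=(H^{-\alpha})'$ and $0<-\alpha<\beta$, the already proven case gives $\|fh\|_{H^{-\alpha}}\lesssim\|f\|_{H^\beta}\|h\|_{H^{-\alpha}}$, and transposing yields the estimate on $H^{\alpha}$.
\end{itemize}

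\textbf{Case $\beta<1/2$.} Now $\langle\cdot\rangle^{-\beta}$ is no longer in $L^2$, and the loss of $\tfrac12-\beta$ derivatives must be recovered through the sharper one-dimensional Hardy--Littlewood--Sobolev/Sobolev embedding $H^\beta\subset L^{q}$ with $\tfrac1q=\tfrac12-\beta$. Here the hypothesis $-\beta<\alpha$ gives $\alpha+\beta>0$, so the target regularity $\gamma=\alpha+\beta-\tfrac12$ is greater than $-\tfrac12$.
\begin{itemize}
\item First region: the weight is bounded by $\langle\xi-\eta\rangle^{\gamma-\alpha}\langle\eta\rangle^{-\beta}$ when $\gamma\ge0$. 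More generally, I bound the weight using $\langle\xi\rangle^{\gamma}\lesssim\langle\xi-\eta\rangle^{\gamma}$ for $\gamma\ge0$, and when $\gamma<0$ I instead keep $\langle\xi\rangle^{\gamma}$ on $H$.
\item In each subcase, the resulting estimate is a product estimate of the form $L^{q}\cdot L^{2}\to L^{r}$ followed by the dual embedding $L^{r}\subset H^{\gamma-\alpha}$ (respectively $H^{\gamma}$). The exponents match exactly because $\tfrac1r=\tfrac12+\tfrac12-\beta$ and $H^{-(1/2-\beta)}\supset L^{r}$ by the dual Sobolev embedding.
\end{itemize}
In physical space this is the Kato--Ponce/Coifman--Meyer paraproduct decomposition $fg=T_fg+T_gf+R(f,g)$:
\begin{itemize}
\item $T_fg$ is bounded using $\|P_{<j}f\|_{L^\infty}\lesssim 2^{j(1/2-\beta)}\|f\|_{H^\beta}$;
\item $T_gf$ is bounded symmetrically;
\item the remainder $R(f,g)$ is bounded using $\alpha+\beta>0$ to sum the high--high interactions.
\end{itemize}

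\textbf{Main obstacle.} I expect the second inequality to be the main difficulty, particularly the sign bookkeeping when $\alpha$ or $\gamma$ is negative and the high--high remainder. The remainder is where the condition $\alpha>-\beta$ is sharp. I would first try the Littlewood--Paley summation, using Bernstein's inequality on each block and Cauchy--Schwarz in the dyadic index. I would then check the endpoint exponents carefully, noting that $c$ depends only on $\alpha$ and $\beta$, and that $c$ blows up as $\alpha+\beta\to0$ or $\beta\to\tfrac12$.
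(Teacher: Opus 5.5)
The paper gives no proof of this lemma. It is quoted from the cited work of Kato and used as a black box, so there is nothing in the paper to compare against; what you outline is the standard Fourier/paraproduct proof.

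Your $\beta>\frac12$ argument is complete and correct: the region splitting, Young plus Cauchy--Schwarz with $\langle\cdot\rangle^{-\beta}\in L^2$, and duality for $\alpha<0$ all work.

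For $\beta<\frac12$, your Littlewood--Paley outline is also correct and suffices on its own. Write $a_j=2^{j\beta}\|P_jf\|_{L^2}$ and $b_j=2^{j\alpha}\|P_jg\|_{L^2}$.
\begin{itemize}
\item The $T_fg$ bound closes because $\gamma+\frac12-\beta=\alpha$.
\item The $T_gf$ bound uses $\|P_{<j}g\|_{L^\infty}\lesssim 2^{j(1/2-\alpha)}\|g\|_{H^\alpha}$, which holds for every $\alpha<\frac12$ of either sign.
\item For the remainder, Bernstein gives $2^{\ell\gamma}\|P_\ell R\|_{L^2}\lesssim\sum_{j\ge\ell-3}2^{-(j-\ell)(\alpha+\beta)}a_jb_j$. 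This is square-summable in $\ell$ exactly because $\alpha+\beta>0$.
\end{itemize}

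Your Fourier-side bullets for $\beta<\frac12$, however, do not close as written.

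First, in the branch $\gamma<0$ you claim the exponents match with $\frac1r=1-\beta$. That is the exponent for $L^r\subset H^{\gamma-\alpha}$ from the $\gamma\ge0$ branch. The embedding $L^r\subset H^{\gamma}$ needs $\frac1r=\frac12-\gamma=1-\alpha-\beta$. You only get this by placing $g$ in $H^\alpha\subset L^{2/(1-2\alpha)}$ rather than in $L^2$, which is possible only when $\alpha\ge0$.

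Second, when $\alpha<0$ (which forces $\gamma<0$), the argument breaks down. Here $g$ lies in no Lebesgue space and the factor $\langle\xi-\eta\rangle^{-\alpha}$ grows. In the high--high zone $|\eta|\sim|\xi-\eta|\gg|\xi|$ of your first region, nothing in your bound absorbs this growth. This is the only case in which the hypothesis $\alpha>-\beta$ is a genuine restriction, and your Fourier sketch does not use it there.

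The cheapest repair is to reuse your own duality step:
\[
\|fg\|_{H^\gamma}\le\|g\|_{H^\alpha}\sup_{\|h\|_{H^{-\gamma}}\le1}\|\bar f\,h\|_{H^{-\alpha}} .
\]
The needed estimate $H^{\beta}\cdot H^{1/2-\alpha-\beta}\subset H^{-\alpha}$ has both indices in $(0,\frac12)$ and positive target index $-\alpha$, so it falls in the $\gamma\ge0$ situation. There your region splitting works, and it does not use $\alpha\le\beta$:
\begin{itemize}
\item In the first region the weight is $\lesssim\langle\xi-\eta\rangle^{\beta-1/2}\langle\eta\rangle^{-\beta}$. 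This closes via $H^\beta\subset L^{2/(1-2\beta)}$, $H^{1/2-\beta}\subset L^{1/\beta}$ and H\"older into $L^2$.
\item In the second region the weight is $\lesssim\langle\eta\rangle^{\alpha-1/2}\langle\xi-\eta\rangle^{-\alpha}$, which closes symmetrically. Note that $\gamma\ge0$ forces $\alpha\ge\frac12-\beta>0$.
\end{itemize}
The remaining case $\alpha\ge0>\gamma$ is handled directly by
\[
\|fg\|_{H^\gamma}\lesssim\|fg\|_{L^r}\le\|f\|_{L^{2/(1-2\beta)}}\|g\|_{L^{2/(1-2\alpha)}} .
\]
Together these give a complete Fourier-side proof, equivalent to your paraproduct one.
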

	
	For an operator $\mathcal T$ and a given function $f$, we can define the commutator
	\begin{equation*}
		[\mathcal T, f]g = \mathcal{T}(fg) - f\mathcal{T}g.
	\end{equation*}
	Then we have the following commutator estimate.
	\begin{lemma}\cite{lannes2013water}
		Let $n > 0$, $\tilde{s} \geq 0$, and $3/2 < \tilde{s} + n \leq \sigma$. Then, for all $f \in H^{\sigma}$ and $g \in H^{\tilde{s}+n-1}$, one has
		\begin{equation*}
			\|[\Lambda^n, f]g\|_{H^{\tilde{s}}} \leq c \|f\|_{H^\sigma} \|g\|_{H^{\tilde{s}+n-1}},
		\end{equation*}
		where $\Lambda = (I-\partial_x^2)^{1/2}$ and $c$ is a constant which is independent of $f$ and $g$.
		\label{estimate}
	\end{lemma}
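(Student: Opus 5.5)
The plan is to argue on the Fourier side and reduce the commutator estimate to weighted convolution bounds, using Young's inequality $L^1 * L^2 \subset L^2$ together with the embedding $\|\langle\cdot\rangle^{a}\widehat h\|_{L^1}\le c\|h\|_{H^{r}}$ for $r>a+1/2$. Here $\langle\xi\rangle=(1+\xi^2)^{1/2}$. Since $\Lambda^n(fg)-f\Lambda^n g$ has Fourier transform
\begin{equation*}
\widehat{[\Lambda^n,f]g}(\xi)=\int_{\R}\bigl(\langle\xi\rangle^{n}-\langle\eta\rangle^{n}\bigr)\widehat f(\xi-\eta)\widehat g(\eta)\,d\eta ,
\end{equation*}
it suffices to bound the $L^2_\xi$ norm of $\langle\xi\rangle^{\tilde s}$ times this integral by $c\|f\|_{H^\sigma}\|g\|_{H^{\tilde s+n-1}}$. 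To do so, I will split the $\eta$-integral into two regions: region I, where $\langle\xi-\eta\rangle\le \frac12\langle\eta\rangle$ ($f$ at low frequency), and region II, its complement ($\langle\xi-\eta\rangle\gtrsim\langle\eta\rangle$).

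In region I, $\langle\xi\rangle\sim\langle\eta\rangle$, because $|\xi-\eta|$ is small relative to $\langle\eta\rangle$. The key input is the mean value theorem applied to $t\mapsto\langle t\rangle^{n}$, whose derivative is bounded by $n\langle t\rangle^{n-1}$. Since every point $t$ on the segment between $\eta$ and $\xi$ satisfies $\langle t\rangle\sim\langle\eta\rangle$, this gives
\begin{equation*}
\bigl|\langle\xi\rangle^{n}-\langle\eta\rangle^{n}\bigr|\le c\,|\xi-\eta|\,\langle\eta\rangle^{n-1},
\end{equation*}
valid for every $n>0$, including $0<n<1$, precisely because the comparability holds along the whole segment. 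Combined with $\langle\xi\rangle^{\tilde s}\sim\langle\eta\rangle^{\tilde s}$, the integrand is dominated by $\langle\xi-\eta\rangle|\widehat f(\xi-\eta)|\cdot\langle\eta\rangle^{\tilde s+n-1}|\widehat g(\eta)|$. Young's inequality then bounds this contribution by
\begin{equation*}
\|\langle\cdot\rangle\widehat f\|_{L^1}\,\|g\|_{H^{\tilde s+n-1}} .
\end{equation*}
The first factor is at most $c\|f\|_{H^\sigma}$, since $\sigma\ge\tilde s+n>3/2$.

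In region II, no cancellation is needed. We have $\langle\xi\rangle\le\langle\xi-\eta\rangle+\langle\eta\rangle\lesssim\langle\xi-\eta\rangle$, so the crude bound $|\langle\xi\rangle^{n}-\langle\eta\rangle^{n}|\lesssim\langle\xi-\eta\rangle^{n}$ holds. Hence the integrand is dominated by $\langle\xi-\eta\rangle^{\tilde s+n}|\widehat f(\xi-\eta)|\cdot|\widehat g(\eta)|$. This time I put $f$ in $L^2$ and $g$ in $L^1$, which gives the bound
\begin{equation*}
\|f\|_{H^{\tilde s+n}}\|\widehat g\|_{L^1}\le c\|f\|_{H^\sigma}\|g\|_{H^{\tilde s+n-1}} .
\end{equation*}
The last step uses $\|\widehat g\|_{L^1}\lesssim\|g\|_{H^{r}}$ for $r>1/2$, and here $r=\tilde s+n-1>1/2$ by the hypothesis $\tilde s+n>3/2$. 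This is exactly where the threshold $3/2$ is used twice, once for each region.

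The main obstacle is the symbol estimate in region I: making the first-order Taylor bound uniform in $n>0$, without losing a factor $\langle\xi-\eta\rangle^{n-1}$ when $n>1$. The splitting threshold $\frac12$ is chosen so that $\langle t\rangle\sim\langle\eta\rangle$ on the segment joining $\eta$ and $\xi$, which makes the bound uniform. I will check this comparability carefully, using $\langle a+b\rangle\le\sqrt2\,\langle a\rangle\langle b\rangle$ or a direct computation. The remaining work, adding the two regional bounds and using $\tilde s\ge0$ so that no negative weights appear, is routine.
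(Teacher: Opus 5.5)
Your proof is correct, and it takes a genuinely different route from the paper. The paper does not prove this lemma; it imports it from Lannes, where it follows from general commutator estimates for pseudo-differential operators with symbols of limited smoothness. Those are obtained by Littlewood--Paley/paradifferential decompositions and hold in any dimension and in sharper, tame forms.

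You instead give a self-contained one-dimensional Fourier argument in two regions:
\begin{itemize}
\item On $\langle\xi-\eta\rangle\le\frac12\langle\eta\rangle$, the mean value theorem moves exactly one derivative from the symbol onto $f$. The bound then closes with $\|\langle\cdot\rangle\widehat f\|_{L^1}\lesssim\|f\|_{H^\sigma}$.
\item On the complementary region, you give up the cancellation and put all $\tilde s+n$ derivatives on $f$. The bound closes with $\|\widehat g\|_{L^1}\lesssim\|g\|_{H^{\tilde s+n-1}}$.
\end{itemize}
Your route is elementary and transparent. It shows that $\tilde s\ge 0$ is used only to get $\langle\xi\rangle^{\tilde s}\lesssim\langle\xi-\eta\rangle^{\tilde s}$ in the second region. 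It also shows that the threshold $\tilde s+n>3/2$ enters only through the two $L^1$ bounds on Fourier transforms. What it does not give is a tame bound separating high and low norms of $f$ and $g$, which is what Lannes' approach is built to deliver. Since the paper only uses the non-tame form as stated, your argument suffices for its purposes.

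Two small points for the write-up. First, Peetre's inequality $\langle a+b\rangle\le\sqrt2\,\langle a\rangle\langle b\rangle$ is too lossy for the comparability on the segment: it only yields $\langle t\rangle\lesssim\langle\eta\rangle^{2}$. Use instead $\langle a+b\rangle\le\langle a\rangle+|b|$, which is the triangle inequality for the Euclidean norm of $(1,a)$. It gives $\frac12\langle\eta\rangle\le\langle t\rangle\le\frac32\langle\eta\rangle$ for every $t$ between $\eta$ and $\xi$, hence the uniform mean-value bound for all $n>0$. Second, it is cleanest to run the Fourier computation for Schwartz $f,g$ and then conclude by density.
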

	
	\begin{lemma}\cite{zhou2022well}
		If $F \in C^{\infty}(\mathbb{R})$ with $F(0) = 0$, then for any $r > 1/2$, we have
		\begin{equation*}
			\|F(u)\|_{H^r} \leq \widetilde{F}(\|u\|_{L^\infty})\|u\|_{H^r}, \quad u \in H^r(\mathbb{R}),
		\end{equation*}
		where $\widetilde{F}$ is a monotone increasing function depending only on $F$ and $r$.
		\label{composition}
	\end{lemma}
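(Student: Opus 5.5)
The statement is the composition estimate: for $F\in C^\infty(\mathbb R)$ with $F(0)=0$ and $r>1/2$,
\[
\|F(u)\|_{H^r}\le \widetilde F(\|u\|_{L^\infty})\|u\|_{H^r}.
\]
I will prove it by splitting into a low-frequency part, handled through $L^2$, and a high-frequency part, handled through a paraproduct (paralinearization) or Littlewood--Paley argument. Throughout, $u\in H^r\subset L^\infty$ because $r>1/2$, so $M:=\|u\|_{L^\infty}$ is finite. Set $C_j(M)=\sup_{|z|\le M}|F^{(j)}(z)|$.

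\textbf{Step 1 ($L^2$ bound).} Since $F(0)=0$, the mean value theorem gives $|F(u)|\le C_1(M)|u|$ pointwise. Hence $\|F(u)\|_{L^2}\le C_1(M)\|u\|_{L^2}$.

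\textbf{Step 2 (integer $r=m$).} Expand $\partial_x^m F(u)$ by the Fa\`a di Bruno formula. It is a sum of terms
\[
F^{(j)}(u)\,\partial^{\alpha_1}u\cdots\partial^{\alpha_j}u,\qquad \alpha_1+\dots+\alpha_j=m,\ \alpha_i\ge 1.
\]
Each coefficient satisfies $|F^{(j)}(u)|\le C_j(M)$. The products are estimated by the Gagliardo--Nirenberg inequality
\[
\|\partial^{\alpha_i}u\|_{L^{2m/\alpha_i}}\lesssim \|u\|_{L^\infty}^{1-\alpha_i/m}\|\partial^m u\|_{L^2}^{\alpha_i/m},
\]
combined with H\"older's inequality. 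This gives $\|\partial^m F(u)\|_{L^2}\lesssim \sum_j C_j(M)M^{j-1}\|u\|_{H^m}$, which has the required linear dependence on $\|u\|_{H^m}$ and a coefficient that is increasing in $M$.

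\textbf{Step 3 (general real $r>1/2$, the main obstacle).} I would use the Littlewood--Paley decomposition. Write
\[
F(u)=\sum_{q} \bigl(F(S_{q+1}u)-F(S_q u)\bigr)=\sum_q m_q\,\Delta_q u,\qquad m_q=\int_0^1F'(S_qu+\tau\Delta_q u)\,d\tau,
\]
which is Meyer's paralinearization trick. Bernstein's inequalities give $\|\partial^\ell m_q\|_{L^\infty}\le C_\ell(M)2^{q\ell}$, using $\|S_q u\|_{L^\infty}\lesssim M$. The dyadic block $\Delta_j$ of the sum is then split into $q\ge j-N$ and $q<j-N$.
\begin{itemize}
\item For $q\ge j-N$, use $\|m_q\Delta_q u\|_{L^2}\le C(M)\|\Delta_q u\|_{L^2}$ and Young's inequality for the discrete convolution $\sum_{q\ge j-N} 2^{(j-q)r}$. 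This sum converges because $r>0$.
\item For $q<j-N$, integrate by parts $L$ times in frequency, with $L>r$, using the derivative bounds on $m_q$. This yields decay $2^{-(j-q)(L-r)}$ times $2^{qr}\|\Delta_q u\|_{L^2}$.
\end{itemize}
Summing in $\ell^2$ gives
\[
\Bigl(\sum_j 2^{2jr}\|\Delta_jF(u)\|_{L^2}^2\Bigr)^{1/2}\le \widetilde F(M)\|u\|_{H^r}.
\]
The delicate part is bookkeeping the $M$-dependence so that $\widetilde F$ depends only on finitely many $C_\ell(M)$ with $\ell\le L\approx \lceil r\rceil+1$. Taking the supremum over $|z|\le CM$ then makes $\widetilde F$ increasing in $M$.

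Combining Step 1 with the homogeneous bound from Step 3 proves the lemma. The hypothesis $r>1/2$ enters only to guarantee $M<\infty$ via the embedding $H^r\subset L^\infty$, and Step 2 serves as a consistency check for integer $r$. Alternatively, one could interpolate the integer cases, but the map $u\mapsto F(u)$ is nonlinear, so Step 3 is the cleaner route.
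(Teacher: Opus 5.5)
Your proof is correct. The paper itself does not prove this lemma. It quotes it from Zhou--Ji, where it is a standard Moser-type composition estimate, and uses it only once, to bound $\|g(u)\|_{H^{s-2k+1}}$ in the global energy estimate.

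What you give is the standard self-contained Littlewood--Paley proof via Meyer's paralinearization. It is essentially the textbook Besov-space argument specialised to $H^r=B^r_{2,2}$. The ingredients all check out:
\begin{itemize}
\item The telescoping series starts from $F(S_{-1}u)=F(0)=0$. It converges absolutely in $L^2$, because $\sum_q\|\Delta_q u\|_{L^2}\lesssim\|u\|_{H^r}$ for $r>0$.
\item The bound $\|\partial_x^\ell m_q\|_{L^\infty}\le C_\ell(M)2^{q\ell}$ follows from Fa\`a di Bruno together with $\|\partial_x^\alpha S_q u\|_{L^\infty}+\|\partial_x^\alpha \Delta_q u\|_{L^\infty}\lesssim 2^{q\alpha}M$.
\item The two regimes produce the kernel $K(n)=2^{nr}\mathbf{1}_{n\le N}+2^{-n(L-r)}\mathbf{1}_{n>N}$. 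This lies in $\ell^1(\mathbb Z)$ precisely because $0<r<L$, so the discrete Young inequality closes the $\ell^2$ sum.
\end{itemize}

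Two cosmetic remarks. First, the step you call integrating by parts in frequency is just the Bernstein bound $\|\Delta_j h\|_{L^2}\lesssim 2^{-jL}\|\partial_x^L h\|_{L^2}$ for $j\ge 0$. You apply it to $h=m_q\Delta_q u$ and combine it with the Leibniz rule. Second, Step 2 is not needed, and Step 1 is also subsumed. The regime $q\ge j-N$ uses only $\|\Delta_j h\|_{L^2}\le\|h\|_{L^2}$, so it already covers the low block $j=-1$.

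Compared with the paper's bare citation, your route has two advantages:
\begin{itemize}
\item It makes $\widetilde F$ explicit: a combination of $\sup_{|z|\le CM}|F^{(\ell)}(z)|$ for $\ell\le L+1$ with powers of $M$, which is visibly increasing in $M$.
\item It shows the estimate holds for every $r>0$ once $u\in H^r\cap L^\infty$. The hypothesis $r>1/2$ serves only to make $M=\|u\|_{L^\infty}$ finite.
\end{itemize}
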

	
	\begin{lemma}\cite{kato1988commutator}\label{est1}
		If $r>0$, then $H^r(\R)\ \cap L^{\infty}(\R)$ is an algebra. Moreover,
		\begin{equation}
			\|fg\|_{H^r} \leq c(\|f\|_{L^\infty}\|g\|_{H^r} + \|f\|_{H^r}\|g\|_{L^\infty}),
		\end{equation}
		where $c$ is a constant depending only on $r$.
		\label{product}
	\end{lemma}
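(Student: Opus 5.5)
The plan is to prove the estimate with a Littlewood--Paley paraproduct decomposition; the algebra property then follows at once. Fix a dyadic partition of unity $1=\sum_{j\ge -1}\varphi_j(\xi)$, with $\varphi_{-1}$ supported in a ball and $\varphi_j(\xi)=\varphi(2^{-j}\xi)$ supported in the annulus $\{2^{j-1}\le|\xi|\le 2^{j+1}\}$ for $j\ge0$. Set $\Delta_j=\varphi_j(D)$ and $S_j=\sum_{l\le j-1}\Delta_l$. We use the standard equivalence
$$\|u\|_{H^r}^2\sim\sum_{j\ge-1}2^{2jr}\|\Delta_j u\|_{L^2}^2 .$$
We also use the uniform bounds $\|S_j u\|_{L^\infty}\le C\|u\|_{L^\infty}$ and $\|\Delta_j u\|_{L^\infty}\le C\|u\|_{L^\infty}$, which hold because the kernels of $S_j$ and $\Delta_j$ are $L^1$-bounded uniformly in $j$. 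Bony's decomposition then gives
$$fg=T_fg+T_gf+R(f,g),\quad T_fg=\sum_j S_{j-1}f\,\Delta_jg,\quad R(f,g)=\sum_{|j-j'|\le1}\Delta_jf\,\Delta_{j'}g,$$
and it suffices to bound each of the three pieces in $H^r$.

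\textbf{Paraproducts.} Each summand $S_{j-1}f\,\Delta_jg$ has Fourier support in an annulus of size $\sim2^j$. Hence $\Delta_q(T_fg)$ only involves those $j$ with $|j-q|\le N_0$ for a fixed $N_0$. This gives
$$2^{qr}\|\Delta_q T_fg\|_{L^2}\le C\sum_{|j-q|\le N_0}\|S_{j-1}f\|_{L^\infty}\,2^{jr}\|\Delta_jg\|_{L^2}.$$
Taking the $\ell^2$ norm in $q$ yields $\|T_fg\|_{H^r}\le C\|f\|_{L^\infty}\|g\|_{H^r}$, and this step works for every real $r$. The symmetric term obeys $\|T_gf\|_{H^r}\le C\|g\|_{L^\infty}\|f\|_{H^r}$.

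\textbf{Remainder.} This is the only place where $r>0$ is used, and it is the main point. The product $\Delta_jf\,\Delta_{j'}g$ with $|j-j'|\le1$ has Fourier support in a ball of radius $\sim2^{j}$, not in an annulus. Therefore $\Delta_qR$ collects all terms with $j\ge q-N_1$, and
$$2^{qr}\|\Delta_qR\|_{L^2}\le C\|g\|_{L^\infty}\sum_{j\ge q-N_1}2^{(q-j)r}\,2^{jr}\|\Delta_jf\|_{L^2}.$$
Since $r>0$, the kernel $2^{-nr}\mathbf 1_{n\ge -N_1}$ belongs to $\ell^1(\mathbb Z)$. Young's inequality for discrete convolutions then gives $\|R(f,g)\|_{H^r}\le C_r\|g\|_{L^\infty}\|f\|_{H^r}$. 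The constant blows up like $(1-2^{-r})^{-1}$ as $r\to0$, which is why $r>0$ is needed.

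Summing the three estimates gives $\|fg\|_{H^r}\le c(\|f\|_{L^\infty}\|g\|_{H^r}+\|f\|_{H^r}\|g\|_{L^\infty})$. Combined with the trivial bound $\|fg\|_{L^\infty}\le\|f\|_{L^\infty}\|g\|_{L^\infty}$, this shows that $H^r\cap L^\infty$ is closed under multiplication, i.e.\ it is an algebra. The constant depends only on $r$ through $\varphi$, $N_0$, $N_1$, and the $\ell^1$ norm of the kernel above.
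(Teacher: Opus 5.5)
The paper does not prove this lemma; it quotes it from Kato--Ponce. They prove the $L^p$ version ($1<p<\infty$) by splitting the symbol of $\Lambda^r(fg)$ according to the relative sizes of the two input frequencies and applying the Coifman--Meyer bilinear multiplier theorem.

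Your Bony paraproduct argument is the standard $L^2$ alternative, as in Bahouri--Chemin--Danchin. It is self-contained: it uses only Plancherel, uniform $L^1$ bounds on the kernels of $S_j$ and $\Delta_j$, and discrete Young's inequality. It also shows that $r>0$ enters only through the high--high interactions. The cited route relies on a heavier black box, but in exchange it gives the $L^p$ statement.

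One detail is wrong as written. With $\mathrm{supp}\,\varphi_j\subset\{2^{j-1}\le|\xi|\le 2^{j+1}\}$, the Fourier support of $S_{j-1}f=\sum_{l\le j-2}\Delta_l f$ lies in $\{|\xi|\le 2^{j-1}\}$. Its radius equals the inner radius of $\mathrm{supp}\,\widehat{\Delta_j g}$. So $S_{j-1}f\,\Delta_j g$ is only known to be supported in the ball $\{|\zeta|\le 5\cdot 2^{j-1}\}$, not in an annulus. Consequently the claim that $\Delta_q T_f g$ involves only $|j-q|\le N_0$, and that this step works for every real $r$, does not follow.

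There are three easy fixes:
\begin{itemize}
\item Set $T_f g=\sum_j S_{j-2}f\,\Delta_j g$ and take the remainder over $|j-j'|\le 2$. The summands are then supported in $\{2^{j-2}\le|\zeta|\le 9\cdot 2^{j-2}\}$.
\item Use the usual supports: $\{3/4\le|\xi|\le 8/3\}$ for $\varphi$ and $\{|\xi|\le 4/3\}$ for the low-frequency cutoff.
\item Since $r>0$ anyway, estimate $T_f g$ exactly as you estimate the remainder. Ball support plus the $\ell^1$ kernel $2^{-nr}\mathbf 1_{n\ge -N}$ suffices.
\end{itemize}
With any of these adjustments your argument is complete.
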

	
	\begin{lemma}\cite{kato1988commutator}
		If $r>0$, then
		\begin{equation*}
			\|[\Lambda^r, f]\, g\|_{L^2(\mathbb{R})} \leq c\ (\|\partial_x f\|_{L^\infty(\mathbb{R})} \|\Lambda^{r-1} g\|_{L^2(\mathbb{R})} + \|\Lambda^r f\|_{L^2(\mathbb{R})} \|g\|_{L^\infty(\mathbb{R})}),
		\end{equation*}
		where $c$ is a constant depending only on $r$.
		\label{commutator_estimate}
	\end{lemma}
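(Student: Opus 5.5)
This is the Kato--Ponce commutator estimate, and I would prove it by a Littlewood--Paley / Bony paraproduct argument rather than by kernel estimates in physical space. Fix a dyadic decomposition $1=\sum_{j\ge -1}\Delta_j$ with low-frequency cut-offs $S_j=\sum_{i<j}\Delta_i$. For two functions write
\[
ab = T_a b + T_b a + R(a,b),
\]
where $T_a b=\sum_j S_{j-2}a\,\Delta_j b$ and $R(a,b)=\sum_{|i-j|\le 1}\Delta_i a\,\Delta_j b$. Applying this to both $fg$ and $f\Lambda^r g$ gives
\[
[\Lambda^r,f]g = [\Lambda^r,T_f]g + \Lambda^r T_g f - T_{\Lambda^r g} f + \Lambda^r R(f,g) - R(f,\Lambda^r g).
\]
I would estimate the five terms separately. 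The aim is to produce either $\|\partial_x f\|_{L^\infty}\|\Lambda^{r-1}g\|_{L^2}$ or $\|\Lambda^r f\|_{L^2}\|g\|_{L^\infty}$ from each term.

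\textbf{The principal term $[\Lambda^r,T_f]g$.} This is the only term where the derivative must genuinely fall on $f$. Each piece $S_{j-2}f\,\Delta_j g$ has Fourier support in an annulus of size $2^j$. There one may replace $\Lambda^r$ by $\Lambda^r\tilde\varphi(2^{-j}D)$, which has an integrable kernel $K_j$ with $\int |y|\,|K_j(y)|\,dy\lesssim 2^{j(r-1)}$. The mean value theorem then gives
\[
\big\|[\Lambda^r\tilde\varphi(2^{-j}D),S_{j-2}f]\Delta_j g\big\|_{L^2}\lesssim 2^{j(r-1)}\|\partial_x f\|_{L^\infty}\|\Delta_j g\|_{L^2}.
\]
Since the outputs are almost orthogonal (annular supports), square-summing yields $\|\partial_x f\|_{L^\infty}\|\Lambda^{r-1}g\|_{L^2}$. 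The finitely many low-frequency blocks ($j\le 1$) are handled directly, because $\Lambda^r$ is bounded on them.

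\textbf{The terms $\Lambda^r T_g f$ and $T_{\Lambda^r g}f$.} Here $f$ carries the high frequency. For the first, $\|S_{j-2}g\|_{L^\infty}\le c\|g\|_{L^\infty}$, and annular support plus orthogonality give the bound $\|g\|_{L^\infty}\|\Lambda^r f\|_{L^2}$. For the second I use Bernstein together with $r>0$:
\[
\|S_{j-2}\Lambda^r g\|_{L^\infty}\lesssim \sum_{i<j-2}2^{ir}\|g\|_{L^\infty}\lesssim 2^{jr}\|g\|_{L^\infty},
\]
which again gives $\|g\|_{L^\infty}\|\Lambda^r f\|_{L^2}$. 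This is the point where $r>0$ enters.

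\textbf{The remainder terms.} I expect these to be the main obstacle. In $R(f,\Lambda^r g)$ and $\Lambda^r R(f,g)$ the pieces $\Delta_j f\,\tilde\Delta_j g$ have Fourier support only in a ball of radius $\sim 2^j$, not an annulus, so there is no orthogonality. Naive $\ell^1$ summation then loses, because each piece is only bounded by $2^{jr}\|\Delta_j f\|_{L^2}\|g\|_{L^\infty}$, which is square-summable but not summable.

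\begin{itemize}
\item For $\Lambda^r R(f,g)$ with $r>0$, the standard trick works. Decompose the output frequency $2^k$ and use that $\Lambda^r$ at frequency $2^k\lesssim 2^j$ costs $2^{kr}=2^{jr}2^{-(j-k)r}$. The geometric factor $2^{-(j-k)r}$ then makes Young's inequality in $\ell^2$ applicable, giving $\|g\|_{L^\infty}\|\Lambda^r f\|_{L^2}$.
\item For $R(f,\Lambda^r g)$ there is no such positive power to exploit. I would not estimate it in isolation but combine it with $\Lambda^r R(f,g)$, so that the combined symbol is $\langle\xi+\eta\rangle^r-\langle\eta\rangle^r$ restricted to $|\xi|\sim|\eta|$. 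I would treat this as a Coifman--Meyer bilinear multiplier, whose symbol after factoring out $\langle\xi\rangle^r$ satisfies the Coifman--Meyer symbol bounds. The $L^2\times L^\infty\to L^2$ boundedness of such operators then gives $\|\Lambda^r f\|_{L^2}\|g\|_{L^\infty}$.
\end{itemize}

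Collecting the five estimates proves the lemma. If the Coifman--Meyer step proved awkward, the fallback is to cite the original argument of \cite{kato1988commutator}, which proceeds via pseudodifferential calculus.
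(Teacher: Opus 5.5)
Your decomposition of $[\Lambda^r,f]g$ is correct. The estimates for $[\Lambda^r,T_f]g$, $\Lambda^r T_g f$, $T_{\Lambda^r g}f$ and $\Lambda^r R(f,g)$ all go through as you describe.

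A minor point first. The blocks $j\le 1$ of $T_f g$ vanish because $S_{j-2}=0$ there. Bounding them ``directly'' would produce $\|f\|_{L^\infty}$, which is not on the right-hand side, but your kernel bound is uniform in $j$, so nothing is lost.

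The step that fails is the last one. The combined symbol $\bigl(\langle\xi+\eta\rangle^r-\langle\eta\rangle^r\bigr)\langle\xi\rangle^{-r}$ on $|\xi|\sim|\eta|\sim 2^j$ does \emph{not} satisfy the Coifman--Meyer bounds $|\partial_\xi^\alpha\partial_\eta^\beta\sigma|\lesssim(|\xi|+|\eta|)^{-|\alpha|-|\beta|}$.

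The resonant region contains the anti-diagonal $\xi+\eta\approx 0$, where $\partial_\eta^n\langle\xi+\eta\rangle^r=O(1)$. After dividing by $\langle\xi\rangle^r\sim 2^{jr}$, the $n$-th derivative has size $2^{-jr}$ rather than $2^{-jn}$. This violates the required bound for every $n>r$, already for $n=1$ when $r<1$. The only exception is $r$ an even integer, when $\langle\zeta\rangle^r$ is a polynomial.

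So merging $R(f,\Lambda^r g)$ with $\Lambda^r R(f,g)$ makes the symbol worse, not better. The Coifman--Meyer theorem cannot be invoked as you state it.

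The repair is to do the opposite: keep your separate bound for $\Lambda^r R(f,g)$ and treat $R(f,\Lambda^r g)$ in isolation. Put $F=\Lambda^r f$ and let $\varphi_i$ denote the symbol of $\Delta_i$. Then $R(f,\Lambda^r g)=T_\sigma(F,g)$ with
\[
\sigma(\xi,\eta)=\sum_{|i-j|\le 1}\varphi_i(\xi)\langle\xi\rangle^{-r}\,\varphi_j(\eta)\langle\eta\rangle^{r}.
\]
Since $|\xi|\sim|\eta|$ on each support, this is a genuine order-zero Coifman--Meyer symbol. No decay factor coming from $r>0$ is needed here.

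The $L^2\times L^\infty\to L^2$ case of the Coifman--Meyer theorem then gives $\|R(f,\Lambda^r g)\|_{L^2}\lesssim\|\Lambda^r f\|_{L^2}\|g\|_{L^\infty}$. Equivalently, you can use the Carleson-measure bound for the resonant paraproduct. This is the one ingredient that orthogonality, Bernstein and Young cannot supply.

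With this correction your argument is a Littlewood--Paley version of the classical proof. The paper gives no proof of its own and simply cites Kato--Ponce. Their argument likewise decomposes frequency space and relies on Coifman--Meyer bilinear estimates.
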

	% \begin{remark}
		%     Throughout this paper, the integral $\int$ without explicit domain denotes integration over $\mathbb{R}$.
		% \end{remark}
	
	\subsection{Existence of Unique Local Solutions}
	First, we construct the non-local form of this equation by inserting the formulation of $m$ into the equation and applying $(I-\partial_x^2)^{-k}$ to both sides to get
	\begin{align*}
		u_t + (I-\partial_x^2)^{-k}\Big(\big((I-\partial_x^2)^{k} u\big)_x u^p\Big) + b(I-\partial_x^2)^{-k}\Big(\big((I-\partial_x^2)^{k} u\big)u^{p-1}u_x\Big)\\ = (I-\partial_x^2)^{-k}\Big(-(g(u))_x + (b+1)(u^pu_x)\Big).
	\end{align*}
	Let's denote $\Gamma=(I-\partial_x^2)^k$. Then, we obtain
	\begin{equation*}
		u_t + \Gamma^{-1}\big((\Gamma u)_x u^p\big) +  b\Gamma^{-1}\big((\Gamma u)u^{p-1}u_x\big) = -\Gamma^{-1}\big(g(u))_{x} - (b+1)(u^pu_x)\big).
	\end{equation*}
	By using the commutator formulation we have
	\begin{equation*}
		\Gamma^{-1}\big((\Gamma u)_x u^p\big) = \Gamma^{-1}((\Gamma u)_x u^p) = \Gamma^{-1}(\Gamma(u^pu_x) - [\Gamma,u^p]u_x) = u^pu_x - \Gamma^{-1}([\Gamma,u^p]u_x).
	\end{equation*}
	Therefore, the equation under consideration can be rewritten as
	\begin{equation*}
		u_t + u^pu_x = \Gamma^{-1}\Big([\Gamma, u^p]u_x - bu^{p-1}u_x\Gamma u - (g(u))_x + (b+1)u^pu_x \Big),
	\end{equation*}
	or equivalently
	\begin{equation*}
		u_t + A(u)u = f(u)
	\end{equation*}
	where
	\begin{equation}
		A(u)=u^p\partial_x,
	\end{equation}
	and
	\begin{equation}
		f(u)=\Gamma^{-1}\Big([\Gamma, u^p]u_x - bu^{p-1}u_x\Gamma u - (g(u))_x + (b+1)u^pu_x \Big).
		\label{f}
	\end{equation}
	
	To prove this result, we will apply Kato's theorem \ref{kato} with $X=H^{s-1}, Y=H^s$, and $Q=\Lambda=(1-\partial_x^2)^{1/2}$.
	\subsection{Proof of Theorem \ref{thm-local}}
	%\begin{proof}
	\subsubsection*{Proof of Assumption (A1):}
	\begin{lemma}\label{quasi-m-accretive}
		For each $u\in H^s$ with $s>2(k-1)+3/2$, the operator $A(u)=u^p\partial_x$  is quasi-$m$-accretive in $L^2(\R)$.
	\end{lemma}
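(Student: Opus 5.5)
We prove that $A(u)=u^p\partial_x$, with domain $D(A)=\{w\in L^2:\ u^p w_x\in L^2\}$, is quasi-$m$-accretive in $L^2(\R)$ by following Kato's classical argument for first-order operators with Lipschitz coefficients. The first step is to record the regularity of the coefficient $a:=u^p$. Since $s>2(k-1)+3/2\ge 3/2$, the embedding $H^s\hookrightarrow C^1$ gives $u\in W^{1,\infty}$. Hence $a\in W^{1,\infty}$ with
\[
\|a_x\|_{L^\infty}=p\|u^{p-1}u_x\|_{L^\infty}\le p\|u\|_{L^\infty}^{p-1}\|u_x\|_{L^\infty}\le c\|u\|_{H^s}^p .
\]
For non-integer $p$ we interpret $u^p$ as $|u|^p$ or $|u|^{p-1}u$. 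This is still $C^1$ because $p\ge1$; the borderline case $p=1$ is fine since then $a=u$. Set $\beta:=\tfrac12\|a_x\|_{L^\infty}$.

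The second step is quasi-accretivity. For $w\in D(A)$, integrate by parts:
\[
(A(u)w,w)_{L^2}=\int a\, w_x w\,dx=-\tfrac12\int a_x w^2\,dx\ \ge\ -\beta\|w\|_{L^2}^2 .
\]
This identity is immediate for $w\in H^1$. For general $w\in D(A)$, where only $a w_x\in L^2$ is known, we justify it by Friedrichs mollification $J_\varepsilon$. The needed fact is that $[J_\varepsilon,a]\partial_x w\to 0$ in $L^2$ for $a\in W^{1,\infty}$, which is Friedrichs' lemma. Consequently $A(u)+\beta$ is accretive.

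The third step is the range condition: we must show that $(A(u)+\lambda)$ has dense range, indeed is onto $L^2$, for some $\lambda>\beta$. Duality is the cleanest route. Suppose $z\in L^2$ is orthogonal to the range of $A(u)+\lambda$. Then in the distributional sense
\[
\lambda z-(a z)_x=0, \qquad\text{that is,}\qquad \lambda z - a z_x - a_x z=0 .
\]
The adjoint $-\partial_x(a\,\cdot)$ has the same structure as $A(u)$, so the mollification argument above applies to it. It gives
\[
0=(\lambda z-(az)_x,z)=\lambda\|z\|^2-\tfrac12\int a_x z^2\,dx\ \ge(\lambda-\beta)\|z\|^2 ,
\]
hence $z=0$. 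So the range is dense. Range closedness follows from the a priori estimate $\|(A+\lambda)w\|\ge(\lambda-\beta)\|w\|$ together with the closedness of $A(u)$ on its maximal domain, and the range is therefore all of $L^2$.

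The main obstacle is justifying these integrations by parts for the maximal domain and for $z\in L^2$ without extra smoothness. That is precisely where the Friedrichs commutator lemma and $a\in W^{1,\infty}$ (from $s>3/2$) are used. Alternatively, we could cite Kato's result that $a\partial_x$ with $a\in W^{1,\infty}$ is quasi-$m$-accretive on $L^2$, which reduces the whole lemma to the Lipschitz bound on $u^p$ from the first step.
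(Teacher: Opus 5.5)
Your proof is correct and follows the same route as the paper. Quasi-accretivity comes from $\int u^p w_x w\,dx=-\tfrac12\int (u^p)_x w^2\,dx$ with $\beta\sim\|(u^p)_x\|_{L^\infty}$, and surjectivity of $\lambda+A(u)$ comes from showing, via the adjoint $-\partial_x(u^p\,\cdot)$, that the annihilator of the range is trivial, combined with closedness of the range. Your use of the maximal domain together with Friedrichs' lemma is more careful than the paper, which asserts $D(A(u)^*)=H^1$ and relies on $A(u)$ being closed with domain $H^1$; both claims fail in general because $u^p\to 0$ at infinity, so your version supplies the justification the paper's argument needs.
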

	\begin{proof}
		Since $L^2$ is a Hilbert space, $A(u)$ is quasi-m-accretive if and only if there is a real number $\beta$ such that
		\begin{itemize}
			\item[(a)] $(A(u)v, v)_{L^2} \geq -\beta\|v\|^2_{L^2}$,
			\item[(b)] The range of $\lambda I+A(u)$ is all of $L^2$ for $\lambda > \beta$.
		\end{itemize}
		For part $(a)$ we have, with $v \in H^1(\mathbb{R})$,
		\begin{equation*}
			|(A(u)v, v)_{L^2}| = |(u^p\partial_xv, v)_{L^2}| = \Bigg|\int u^p\partial_xvv\ dx\Bigg| = \frac 12 \left|\int (u^p)_xv^2\right|
		\end{equation*}
		where $(\cdot,\cdot)_{L^2}$ denotes the inner product in $L^2(\R)$. Since $u \in H^s$ with $s>3/2$, by Sobolev embedding, $u, u_x \in L^{\infty}(\mathbb{R})$ and thus
		\begin{equation*}
			\|(u^p)_x\|_{L^\infty} = \|pu^{p-1}u_x\|_{L^\infty} \leq p\|u\|^{p-1}_{L^{\infty}}\|u_x\|_{L^{\infty}} \leq p\|u\|_{H^s}^p.
		\end{equation*}
		Thus, we get
		\begin{equation*}
			|(A(u)v, v)_{L^2}| \le \frac{1}{2}\|(u^p)_x\|_{L^\infty}\|v\|_{L^2}^2\le \frac{p}{2}\|u\|^p_{H^s}\|v\|^2_{L^2}.
		\end{equation*}
		By setting $\beta=\frac{p}{2}\|u\|^p_{H^s}$, it follows that $(A(u)v, v)_{L^2} \geq -\beta\|v\|_{L^2}^2$.
		
		\medskip
		For part $(b)$, we consider the adjoint operator $A(u)^*$, which is defined by
		\begin{equation*}
			(A(u)v, w)_{L^2} = (v, A(u)^*w)_{L^2},\quad \forall v \in D(A(u)), w\in D(A(u)^*).
		\end{equation*}
		For $v \in H^1(\R), w\in C^\infty_c(\R)$,
		\begin{equation*}
			(A(u)v,w) = \int (u^p\partial_xv)w\ dx = -\int v\partial_x(u^pw)\ dx = (v, -\partial_x(u^pw))_{L^2}.
		\end{equation*}
		Thus, $A(u)^*w= -\partial_x(u^pw)$. The domain $D(A(u)^*)=H^1(\R)$ since
		\begin{equation*}
			\partial_x(u^pw) = (\partial_xu^p)w + u^p\partial_xw,
		\end{equation*}
		and $\partial_xw \in L^2(\R)$, $u^p, \partial_x u^p\in L^\infty(\R)$ due to $w\in H^1(\R)$ and $u\in H^s$ with $s>3/2$. So, $\partial_x(u^pw) \in L^2(\R)$, which implies that $D(A(u))=D(A(u)^*)$.\\~\\
		Also, since $A(u)$ is closed and satisfies (a), the operator $\lambda I + A(u)$ has closed range for $\lambda > \beta$. To show surjectivity, suppose $w \in L^2$ satisfies
		\begin{equation*}
			((\lambda I + A(u)) v, w)_{L^2} = 0 \quad \forall v \in H^1(\mathbb{R}).
		\end{equation*}
		As $H^1(\mathbb{R})$ is dense in $L^2$, this implies $w \in {D}(A(u)^*)$ and $(\lambda I + A(u)^*) w = 0$. 
		For the adjoint operator, we have from part $(a)$
		\begin{equation*}
			(A(u)^*w, w)_{L^2} = (A(u) w, w)_{L^2} \geq -\beta\|w\|^2_{L^2}.
		\end{equation*}
		Thus,
		\begin{equation*}
			0 = ((\lambda I + A(u)^*) w, w)_{L^2} \geq \lambda\|w\|^2_{L^2}-\beta\|w\|^2_{L^2} = ((\lambda - \beta) \|w\|_{L^2}^2.
		\end{equation*}
		For $\lambda > \beta$, this implies $w = 0$. Hence, $\lambda I +A(u)$ is surjective.
	\end{proof}
	\begin{lemma}\cite{pazy2012semigroups}
		Let $\widehat X$ and $\widehat Y$ be two Banach spaces such that $\widehat Y$ is continuously and densely embedded in $\widehat X$. Let $-A$ be the infinitesimal generator of the $C_{0}$-semigroup $T(t)$ on $\widehat X$ and let \(\widehat Q\) be an isomorphism from $\widehat Y$ onto $\widehat X$. Then $\widehat Y$ is $-A$-admissible (i.e. $T (t)\widehat Y \subset \widehat Y$ for all $t \geq 0$, and the restriction of $T(t)$ to $\widehat Y$ is a \(C_{0}\)-semigroup on $\widehat Y$ ) if and only if $-A_{1} = - \widehat QA\widehat Q^{-1}$ is the infinitesimal generator of the $C_{0}$-semigroup $T_{1}(t) = \widehat QT(t)\widehat Q^{-1}$ on $X$. Moreover, if $\widehat Y$ is $-A$-admissible, then the part of $- A$ in $\widehat Y$ is the infinitesimal generator of the restriction $T(t)$ to $\widehat Y$.
		\label{pazy}
	\end{lemma}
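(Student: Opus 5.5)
This is the classical similarity/admissibility result from \cite{pazy2012semigroups}. I would prove it directly, by transporting semigroups along the isomorphism $\widehat Q$ and identifying generators through resolvents. Throughout, $-A$ generates $T(t)$ on $\widehat X$ with $\|T(t)\|\le Me^{\omega t}$. I write $A_{\widehat Y}$ for the part of $A$ in $\widehat Y$, that is, $D(A_{\widehat Y})=\{y\in D(A)\cap \widehat Y:\ Ay\in \widehat Y\}$ with $A_{\widehat Y}y=Ay$. I read $A_1=\widehat QA\widehat Q^{-1}$ on its natural domain $D(A_1)=\{x:\ \widehat Q^{-1}x\in D(A_{\widehat Y})\}$, so that $A_1=\widehat Q A_{\widehat Y}\widehat Q^{-1}$.

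\emph{Step 1 (the generator of the restriction is the part of $-A$).} Suppose $\widehat Y$ is $-A$-admissible, and let $-G$ be the generator of $T(t)|_{\widehat Y}$ on $\widehat Y$.
\begin{itemize}
\item If $y\in D(G)$, then $(T(t)y-y)/t$ converges in $\widehat Y$, hence in $\widehat X$ by the continuous embedding. So $y\in D(A)$ and $Ay=Gy\in\widehat Y$, which gives $G\subset A_{\widehat Y}$.
\item For the reverse inclusion, take $\lambda>\omega$ large enough to lie in the resolvent sets of both generators. On $\widehat Y$, both $(\lambda+G)^{-1}$ and $(\lambda+A)^{-1}|_{\widehat Y}$ are given by the Laplace transform $\int_0^\infty e^{-\lambda t}T(t)\,dt$. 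The first integral converges in $\widehat Y$, hence also in $\widehat X$, so the two resolvents coincide on $\widehat Y$.
\item Now let $y\in D(A_{\widehat Y})$ and set $z=(\lambda+A)y\in\widehat Y$. Then $y=(\lambda+A)^{-1}z=(\lambda+G)^{-1}z\in D(G)$.
\end{itemize}
Hence $G=A_{\widehat Y}$, which is the ``moreover'' statement.

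\emph{Step 2 (admissible $\Rightarrow$ $T_1$ is generated by $-A_1$).} This follows from Step 1 by similarity. Define $T_1(t)=\widehat QT(t)\widehat Q^{-1}$; it is well defined because $T(t)\widehat Y\subset\widehat Y$. Since $\widehat Q:\widehat Y\to\widehat X$ is an isomorphism, the semigroup property and strong continuity of $T(t)|_{\widehat Y}$ on $\widehat Y$ transfer to $T_1$ on $\widehat X$. Its generator is $-\widehat Q G\widehat Q^{-1}=-\widehat QA_{\widehat Y}\widehat Q^{-1}=-A_1$, with domain $\widehat Q D(G)$.

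\emph{Step 3 (converse).} Suppose $-A_1$ generates a $C_0$-semigroup $S(t)$ on $\widehat X$. Set $\tilde S(t)=\widehat Q^{-1}S(t)\widehat Q$. This is a $C_0$-semigroup on $\widehat Y$ with generator $-\widehat Q^{-1}A_1\widehat Q=-A_{\widehat Y}$; in particular $D(A_{\widehat Y})$ is dense in $\widehat Y$. Fix $y\in D(A_{\widehat Y})$ and compare the two semigroups on it:
\begin{itemize}
\item $t\mapsto \tilde S(t)y$ is $C^1$ into $\widehat Y$, hence into $\widehat X$. It stays in $D(A_{\widehat Y})\subset D(A)$ and solves $u'=-Au$, $u(0)=y$, in $\widehat X$.
\item $T(t)y$ solves the same Cauchy problem.
\item Uniqueness of classical solutions for a generator gives $T(t)y=\tilde S(t)y$. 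The standard argument differentiates $s\mapsto T(t-s)u(s)$.
\end{itemize}
Both sides are continuous from $\widehat Y$ to $\widehat X$, so density of $D(A_{\widehat Y})$ in $\widehat Y$ gives $T(t)y=\tilde S(t)y$ for every $y\in\widehat Y$. Therefore $T(t)\widehat Y\subset\widehat Y$, the restriction is the $C_0$-semigroup $\tilde S$ on $\widehat Y$, and $S(t)=\widehat QT(t)\widehat Q^{-1}=T_1(t)$.

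The main obstacle is the bookkeeping of domains. One must make precise which realization of $\widehat QA\widehat Q^{-1}$ is meant; the natural one is the transported part of $A$. The point needing care is the identification $G=A_{\widehat Y}$: the inclusion $A_{\widehat Y}\subset G$ is not visible from difference quotients alone, and I would obtain it from the resolvent argument above. The uniqueness step in the converse is routine once the domain is identified.
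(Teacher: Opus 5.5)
Your proof is correct. It is essentially the standard argument behind the result the paper cites from Pazy without proof: identify the generator of the restricted semigroup with the part of $-A$ in $\widehat Y$, transport by similarity along $\widehat Q$ (reading $A_1$ on its natural domain $\widehat Q\,D(A_{\widehat Y})$, exactly as needed), and for the converse pull the semigroup back to $\widehat Y$ and identify it with $T(t)$.

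One small remark: the inclusion $A_{\widehat Y}\subset G$ can in fact be seen directly. For $y\in D(A_{\widehat Y})$ one has $T(t)y-y=-\int_0^t T(s)Ay\,ds$ with $s\mapsto T(s)Ay$ continuous into $\widehat Y$, so the difference quotient converges in $\widehat Y$. Your resolvent argument is equally valid.
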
 
	\begin{lemma}
		For each $u\in H^s$ with $s>2(k-1)+3/2$, the operator $A(u)=u^p\partial_x$  is quasi-$m$-accretive in $H^{s-1}$.
		\label{lemma 2.6}
	\end{lemma}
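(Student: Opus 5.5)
Via Lemma \ref{pazy}, the plan is to reduce the claim in $H^{s-1}$ to a statement in $L^2$. Since $\Lambda^{s-1}$ is an isomorphism from $H^{s-1}$ onto $L^2$, showing that $A(u)$ is quasi-$m$-accretive in $H^{s-1}$ amounts to showing that the conjugated operator
\begin{equation*}
A_1(u) := \Lambda^{s-1}A(u)\Lambda^{1-s} = A(u) + B_1(u), \qquad B_1(u) := [\Lambda^{s-1},u^p\partial_x]\Lambda^{1-s}
\end{equation*}
is quasi-$m$-accretive in $L^2$. By Lemma \ref{quasi-m-accretive}, $A(u)$ is quasi-$m$-accretive in $L^2$, since $s>3/2$. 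A bounded perturbation of a quasi-$m$-accretive operator is again quasi-$m$-accretive (standard bounded perturbation theorem, cf. \cite{pazy2012semigroups}). It therefore suffices to show that $B_1(u)\in\mathcal L(L^2)$, with norm controlled by a function of $\|u\|_{H^s}$.

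To prove this boundedness, write, for $w\in L^2$,
\begin{equation*}
B_1(u)w = [\Lambda^{s-1},u^p]\,\partial_x\Lambda^{1-s}w ,
\end{equation*}
which holds because $\partial_x$ commutes with $\Lambda^{s-1}$. We then apply Lemma \ref{commutator_estimate} with $r=s-1>0$, $f=u^p$ and $g=\partial_x\Lambda^{1-s}w$. This gives
\begin{equation*}
\|B_1(u)w\|_{L^2}\le c\big(\|\partial_x(u^p)\|_{L^\infty}\|\Lambda^{s-2}\partial_x\Lambda^{1-s}w\|_{L^2}+\|\Lambda^{s-1}u^p\|_{L^2}\|\partial_x\Lambda^{1-s}w\|_{L^\infty}\big).
\end{equation*}
The first term is harmless. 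Indeed, $\Lambda^{-1}\partial_x$ is bounded on $L^2$, and $\|\partial_x(u^p)\|_{L^\infty}\le p\|u\|_{H^s}^p$ exactly as in Lemma \ref{quasi-m-accretive}.

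The main obstacle is the second term, because $\partial_x\Lambda^{1-s}w$ must be placed in $L^\infty$. For $w\in L^2$ we have $\partial_x\Lambda^{1-s}w\in H^{s-2}$, which embeds in $L^\infty$ only if $s-2>1/2$, i.e.\ $s>5/2$. This holds automatically when $k\ge 2$, since then $s>2(k-1)+3/2\ge 7/2$. In that case $\|\Lambda^{s-1}u^p\|_{L^2}\le \|u^p\|_{H^s}\le \widetilde F(\|u\|_{L^\infty})\|u\|_{H^s}$ by Lemma \ref{composition}, and the bound closes.

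For the borderline case $k=1$, $3/2<s\le 5/2$, I would not use Lemma \ref{commutator_estimate}. I would instead invoke the Kato--Ponce/Lannes estimate, Lemma \ref{estimate}, with $n=s-1$, $\tilde s=0$ and $\sigma=s$. This requires $\tilde s+n=s-1>3/2$, so it also fails near the endpoint. The fallback is therefore the sharper form of the Kato--Ponce commutator estimate,
\begin{equation*}
\|[\Lambda^{r},f]g\|_{L^2}\le c\|f\|_{H^{s}}\|g\|_{H^{r-1}}, \qquad 0<r\le s,\ s>3/2,
\end{equation*}
applied with $r=s-1$ and $g=\partial_x\Lambda^{1-s}w\in H^{s-2}$. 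This gives $\|B_1(u)w\|_{L^2}\le c\|u^p\|_{H^s}\|w\|_{L^2}$, uniformly for $s>3/2$, and $\|u^p\|_{H^s}$ is again bounded by Lemma \ref{composition}.

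Once $\|B_1(u)\|_{\mathcal L(L^2)}\le C(\|u\|_{H^s})$ is established, $A_1(u)$ satisfies condition (a) with $\beta+C$, and condition (b) follows by the same adjoint argument or by perturbation. Hence $A_1(u)$ is quasi-$m$-accretive in $L^2$. Lemma \ref{pazy} then transfers this back, and we conclude that $A(u)$ is quasi-$m$-accretive in $H^{s-1}$.
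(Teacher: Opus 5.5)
Your argument is correct and has the same skeleton as the paper's proof of part (b). You conjugate by $\Lambda^{s-1}$, show $B_1(u)=[\Lambda^{s-1},u^p]\partial_x\Lambda^{1-s}\in\mathcal L(L^2)$, perturb the $L^2$ generator from Lemma~\ref{quasi-m-accretive}, and transfer back with Lemma~\ref{pazy}. There are two differences worth recording.

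\emph{The direct check of (a).} The paper also verifies $(A(u)v,v)_{H^{s-1}}\ge-\beta\|v\|_{H^{s-1}}^2$ directly. It splits $\Lambda^{s-1}(u^p\partial_x v)=[\Lambda^{s-1},u^p]\partial_x v+u^p\partial_x\Lambda^{s-1}v$ and integrates by parts in the second term. You rightly skip this, because
\[
\|T(t)v\|_{H^{s-1}}=\|T_1(t)\Lambda^{s-1}v\|_{L^2}\le e^{(\beta+\|B_1(u)\|)t}\|v\|_{H^{s-1}}
\]
already gives quasi-contractivity in $H^{s-1}$.

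\emph{The commutator bound.} This is the more substantive difference. In both (a) and (b), the paper bounds the commutator with Lemma~\ref{estimate} at $\tilde s=0$, $n=s-1$, $\sigma=s$. That lemma's hypothesis $3/2<\tilde s+n$ is exactly $s>5/2$. So, as written, the paper's proof covers $k\ge2$ but not $k=1$ with $3/2<s\le 5/2$.

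Your route closes that gap. For $s>5/2$ you use Lemma~\ref{commutator_estimate}. In the remaining range you use the sharp estimate $\|[\Lambda^r,f]g\|_{L^2}\lesssim\|f\|_{H^s}\|g\|_{H^{r-1}}$ for $0<r\le s$, $s>3/2$. Since that estimate is not among the paper's lemmas, you must cite it explicitly. One source is Lannes' commutator estimate $\|[\Lambda^r,f]g\|_{L^2}\lesssim\|\partial_x f\|_{H^{t_0}}\|g\|_{H^{r-1}}$ for $t_0>1/2$, $0<r\le t_0+1$, applied with $t_0=s-1$. Another is the classical Kato bound showing $[\Lambda^{s-1},M_f]\Lambda^{2-s}\in\mathcal L(L^2)$ with norm $\lesssim\|f\|_{H^s}$ for $s>3/2$.

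Note that this sharp estimate alone handles every $k$, so your case split via Lemma~\ref{commutator_estimate} is unnecessary.
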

	\begin{proof}
		Similarly to Lemma \ref{lemma 2.6},  $A(u)=u^p\partial_x$ is quasi-$m$-accretive if and only if there is a real number $\beta$ such that
		\begin{itemize}
			\item[(a)] $(A(u) v, v)_{s-1} \geq -\beta \|v\|_{H^{s-1}}^2$,
			\item[(b)] $-A(u)$ is the infinitesimal generator of a $C_0$-semigroup on $H^{s-1}$.
		\end{itemize}
		First, we will prove part $(a)$. Since $u\in H^s$ with $s>2(k-1)+3/2$, $u$ and $u_x$ belong to $L^\infty$, which follows that $\|u\|_{L^\infty} \leq \|u\|_{s}$ and $\|u_x\|_{L^\infty} \leq \|u\|_{s}$. Then, according to the fact that
		\begin{align*}
			\Lambda^{s-1} (u^p \partial_x v) &= [\Lambda^{s-1}, u^p] \partial_x v + u^p \Lambda^{s-1}(\partial_x v) =[\Lambda^{s-1}, u^p] \partial_x v + u^p \partial_x\Lambda^{s-1} v,
		\end{align*}
		where we recall $\Lambda=(I-\partial_x^2)^{1/2}$. We have
		\begin{align*}
			(A(u) v, v)_{s-1} &= (u^p\partial_xv, v)_{s-1},&\\
			&=(\Lambda^{s-1} (u^p \partial_x v), \Lambda^{s-1} v )_{L^2},&\\
			&=([\Lambda^{s-1}, u^p] \partial_x v, \Lambda^{s-1}v)_{L^2} + (u^p \partial_x\Lambda^{s-1} v, \Lambda^{s-1}v)_{L^2}.
		\end{align*}
		For the commutator term, we apply the commutator estimate Lemma \ref{estimate} with $\sigma = s$, $\tilde s = 0$ and $n = s-1$ to obtain 
		\begin{align*}
			\|[ \Lambda^{s-1}, u^p ] v_x \|_{L^2} &\leq c \|u^p\|_{H^s}\|\partial_x v\|_{H^{s-2}} \leq \tilde{c}\|v\|_{H^{s-1}},
		\end{align*}
		where $\tilde{c}$ is independent of $v$. For the second term, we use the integration by parts to get
		\begin{align*}
			|(u^p \partial_x \Lambda^{s-1} v, \Lambda^{s-1} v )_{L^2}| &= \Big| -\frac{1}{2} \int (u^p)_x (\Lambda^{s-1} v )^2 dx\Big|,&\\
			&\leq c\|(u^p)_x\|_{L^\infty}\|\Lambda^{s-1}v\|_{L^2}^2,&\\
			&\leq \|(u^p)_x\|_{L^\infty}\|v\|^2_{H^{s-1}},&\\
			&\leq \tilde{c} \|v\|^2_{H^{s-1}},
		\end{align*}
		where $\tilde{c}$ is independent of $v$.
		Then, by setting $\beta = \tilde{c}\|u\|_{H^{s}}$ we have get $(A(u) v, v)_{H^{s-1}} \geq -\beta \|v\|_{H^{s-1}}^2$ as required.
		
		\medskip
		Now, we will prove part $(b)$. Let $\widehat Q=\Lambda^{s-1}$ be an isomorphism defined as
		\begin{equation*}
			\widehat Q=\Lambda^{s-1}=(I-\partial_x^2)^{s/2}: H^{s-1} \mapsto L^2.
		\end{equation*}
		Define
		\begin{align*}
			A_1(u):=\widehat QA(u)\widehat Q^{-1} &= \Lambda^{s-1} (u^p\partial_x) \Lambda^{1-s},&\\
			&= u^p\partial_x + [\Lambda, u^p]\partial_x\Lambda^{-1}.
		\end{align*}
		and
		\begin{equation*}
			B_1 := A_1(u)-A(u).
		\end{equation*}
		For $v \in L^2$ and $u\in H^{s-1}$, where $s>2(k-1)+3/2$, we have
		\begin{align*}
			\|B_1(u)\|_{L^2} &= \|[\Lambda^{s-1}, A(u)]\Lambda^{1-s}v\|_{L^2},&\\
			&= \|[\Lambda^{s-1}, u^p]\Lambda^{1-s}\partial_xv\|_{L^2},&\\
			&\leq c \|u^p\|_{H^s}\|\Lambda^{1-s}\partial_xv\|_{H^{s-2}},&\\
			&\leq c(\|u\|_{H^s})|v\|_{L^2},
		\end{align*}
		where we again used the commutator estimate Lemma \ref{estimate} with $\tilde{s}=0, n=s-1$, and $\sigma=s$. So, we obtain that $B_1(u) \in \mathcal{L}(L^2)$.
		
		\medskip
		Recall that from Lemma \ref{quasi-m-accretive} that $-A(u)$ is quasi-$m$-accretive in $L^2$, i.e., $-A(u)$ is the infinitesimal generator of a $C_0$-semigroup on $L^2$. Thus, $A_1(u)=A(u)+B_1(u)$ is also the infinitesimal generator of a $C_0$-semigroup on $L^2$ by the perturbation theorem for semigroup \cite{pazy2012semigroups}. Then, by using the Lemma \ref{pazy} with $\widehat X=L^2, \widehat Y=H^{s-1}$, and $\widehat Q=\Lambda^{s-1}$, we can conclude that $H^{s-1}$ is $-A$-admissible. Therefore, $-A(u)$ is the infinitesimal generator of a $C_0$-semigroup on $H^{s-1}$, which completes the proof of Lemma \ref{lemma 2.6}, and thus assumption $(A1)$.
	\end{proof}
	
	\subsubsection*{Proof of Assumption (A2):}
	
	\begin{lemma}
		Let the operator $A(u)=u^p\partial_x$ with $u \in H^s$, where $s > 2(k - 1) + 3/2$. Then, $A(u) \in \mathcal{L}(H^s, H^{s-1})$ for any $u \in H^s$. Moreover,
		\begin{equation*}
			\|(A(u) - A(v))w\|_{H^{s-1}} \leq \lambda_1 \|u - v\|_{H^{s-1}} \|w\|_{H^s},
		\end{equation*}
		where $u,v,w \in H^s$ and $\lambda_1$ is a constant independent of $w$.
		\label{lemma 2.7}
	\end{lemma}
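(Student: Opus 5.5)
The plan is to handle the two parts of Lemma \ref{lemma 2.7} separately, using only the algebra property of $H^{s-1}$ and the fact that $u^p-v^p$ factors through $u-v$. Since $s>2(k-1)+3/2\ge 3/2$, we have $s-1>1/2$. Hence $H^{s-1}$ is a Banach algebra: by Lemma \ref{leibniz} with $\alpha=\beta=s-1>1/2$,
\begin{equation*}
\|fg\|_{H^{s-1}}\le c\|f\|_{H^{s-1}}\|g\|_{H^{s-1}}.
\end{equation*}

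\textbf{Boundedness.} Take $u\in H^s$ and $w\in H^s$. Then
\begin{equation*}
\|A(u)w\|_{H^{s-1}}=\|u^p w_x\|_{H^{s-1}}\le c\|u^p\|_{H^{s-1}}\|w_x\|_{H^{s-1}}\le c\|u\|_{H^{s-1}}^p\|w\|_{H^s}.
\end{equation*}
Here $\|u^p\|_{H^{s-1}}$ is controlled by iterating the algebra property, or alternatively by Lemma \ref{composition} with $F(x)=x^p$, whose bound is finite because $u\in L^\infty$. This gives $A(u)\in\mathcal L(H^s,H^{s-1})$.

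\textbf{Lipschitz estimate.} If $p$ is an integer, write
\begin{equation*}
u^p-v^p=(u-v)\sum_{j=0}^{p-1}u^{j}v^{p-1-j}.
\end{equation*}
Applying the algebra property twice gives
\begin{equation*}
\|(A(u)-A(v))w\|_{H^{s-1}}\le c\|u-v\|_{H^{s-1}}\Big\|\sum_{j}u^jv^{p-1-j}\Big\|_{H^{s-1}}\|w\|_{H^s}.
\end{equation*}
The middle factor is bounded by $C(\|u\|_{H^s},\|v\|_{H^s})$, since $\|\cdot\|_{H^{s-1}}\le\|\cdot\|_{H^s}$. So $\lambda_1$ depends only on the radius of the ball containing $u$ and $v$, and not on $w$. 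For $p=1$ the middle factor is just $1$.

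\textbf{Main obstacle: non-integer $p$.} For non-integer $p\ge1$ the map $x\mapsto x^p$ is not smooth at $0$, so the factorization and the composition lemma are less direct. I would first write
\begin{equation*}
u^p-v^p=(u-v)\int_0^1 p\,(\theta u+(1-\theta)v)^{p-1}\,d\theta,
\end{equation*}
and then bound the integrand in $H^{s-1}$ using Lemma \ref{composition}. This needs enough smoothness of $x^{p-1}$, which effectively requires $p$ to be an integer or the nonlinearity to be smooth. I would therefore state the estimate for integer $p$, the setting implicitly used throughout (e.g.\ $u^{p-1}$ appearing in $f$), and remark that general $p$ requires this smoothness hypothesis.
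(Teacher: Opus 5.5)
Your proposal is correct and follows the paper's proof: both rest on the algebra property of $H^{s-1}$ (valid since $s-1>1/2$) and the factorization $u^p-v^p=(u-v)\sum_{j=0}^{p-1}u^{j}v^{p-1-j}$, with the paper getting boundedness by setting $v=0$ rather than proving it directly. Your caveat about non-integer $p$ is fair, since the paper's finite-sum factorization likewise tacitly assumes $p$ is an integer.
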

	\begin{proof}
		For given $u,v,w \in H^s$ with $s>2(k-1)+3/2$, we have
		\begin{align*}
			\|(A(u) - A(v))w\|_{H^{s-1}} = \|(u^p-v^p)\partial_xw\|_{H^{s-1}} &\leq c\|u^p-v^p\|_{H^{s-1}}\|\partial_xw\|_{H^{s-1}},&\\
			&= c \left\|(u - v) \sum_{j=0}^{p-1} u^{p-1-j} v^j\right\|_{H^{s-1}}\|\partial_xw\|_{H^{s-1}},&\\
			&\leq c \|u-v\|_{H^{s-1}} \sum_{j=0}^{p-1} \|u\|_{H^{s-1}}^{p-1-j}\|v\|_{H^{s-1}}^{i}\|w\|_{H^s},&\\
			&\leq c \|u-v\|_{H^{s-1}} \sum_{j=0}^{p-1} \|u\|_{H^s}^{p-1-j}\|v\|_{H^s}^{i} \|w\|_{H^s},&\\
			&= \lambda_1 \|u-v\|_{H^{s-1}}\|w\|_{H^s},
		\end{align*}
		where $\lambda_1$ is a constant. Note that we used the fact that $H^{s-1}$ is an algebra for the estimates. Then, take $v=0$ in the above inequality to obtain $A(u) \in \mathcal{L}(H^s, H^{s-1})$, which completes the proof of Lemma \ref{lemma 2.7}, and thus assumption $(A2)$.
	\end{proof}
	
	\subsubsection*{Proof of Assumption (A3):}
	
	\begin{lemma}
		For any $u \in H^s$, there exists a bounded linear operator $B(u) \in \mathcal{L}(H^{s-1})$ satisfying $B(u)=\Lambda A(u)\Lambda^{-1}-A(u)$, where $B: H^s \mapsto \mathcal{L}(H^{s-1})$ is uniformly bounded on bounded sets in $H^{s}$. Moreover,
		\begin{equation*}
			\|(B(u) - B(v))w\|_{H^{s-1}} \leq \lambda_2 \|u - v\|_{H^s} \|w\|_{H^{s-1}},
		\end{equation*}
		where $u,v \in H^s, w\in H^{s-1}$, and $\lambda_2$ is a constant independent of $w$.
		\label{lemma 2.8}
	\end{lemma}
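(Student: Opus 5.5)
We first compute $B(u)$ explicitly. Since $\Lambda$ commutes with $\partial_x$,
\begin{equation*}
B(u)w=\Lambda\big(u^p\partial_x\Lambda^{-1}w\big)-u^p\partial_x w=[\Lambda,u^p]\,\partial_x\Lambda^{-1}w .
\end{equation*}
The whole lemma therefore reduces to estimating the commutator $[\Lambda,f]$ in $H^{s-1}$ with $f=u^p$, or $f=u^p-v^p$ for the Lipschitz part. Note that $B(u)$ is linear in $f$, so
\begin{equation*}
(B(u)-B(v))w=[\Lambda,u^p-v^p]\,\partial_x\Lambda^{-1}w .
\end{equation*}

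The key step is to apply the commutator estimate, Lemma \ref{estimate}, with $n=1$, $\tilde s=s-1$ and $\sigma=s$. The hypothesis $3/2<\tilde s+n=s\le\sigma$ holds because $s>2(k-1)+3/2\ge 3/2$. This gives
\begin{equation*}
\|[\Lambda,f]g\|_{H^{s-1}}\le c\,\|f\|_{H^s}\,\|g\|_{H^{s-1}} .
\end{equation*}
We take $g=\partial_x\Lambda^{-1}w$. Since $\partial_x\Lambda^{-1}$ is a Fourier multiplier with symbol of modulus at most $1$, we have $\|g\|_{H^{s-1}}\le\|w\|_{H^{s-1}}$.

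\textbf{Boundedness.} With $f=u^p$, we obtain $\|B(u)w\|_{H^{s-1}}\le c\|u^p\|_{H^s}\|w\|_{H^{s-1}}$. Because $s>1/2$, $H^s$ is an algebra (Lemma \ref{product} together with Sobolev embedding). Hence $\|u^p\|_{H^s}\le c\|u\|_{H^s}^p$, so $B(u)\in\mathcal L(H^{s-1})$ and $B$ is uniformly bounded on bounded sets of $H^s$.

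\textbf{Lipschitz estimate.} We factor
\begin{equation*}
u^p-v^p=(u-v)\sum_{j=0}^{p-1}u^{p-1-j}v^j .
\end{equation*}
Using the algebra property of $H^s$ again,
\begin{equation*}
\|u^p-v^p\|_{H^s}\le c\,\|u-v\|_{H^s}\sum_{j=0}^{p-1}\|u\|_{H^s}^{p-1-j}\|v\|_{H^s}^{j}.
\end{equation*}
This yields the claim with $\lambda_2$ depending on the radius of the ball containing $u,v$, and independent of $w$.

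\textbf{Non-integer $p$.} If $p$ is not an integer, the factorization is unavailable. In that case we write
\begin{equation*}
u^p-v^p=(u-v)\int_0^1 p\,(\theta u+(1-\theta)v)^{p-1}\,d\theta
\end{equation*}
and control the integrand using the composition estimate, Lemma \ref{composition}. This step is the only place with a genuine difficulty, because $x\mapsto x^{p-1}$ may fail to be smooth at $0$. We expect the paper intends integer $p$, and we would treat that case as the main line of the proof. Everything else is a direct application of Lemma \ref{estimate}, with the indices checked as above.
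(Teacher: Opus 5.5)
Your proposal is correct and follows the paper's own argument. You write $B(u)=[\Lambda,u^p]\Lambda^{-1}\partial_x$, apply Lemma \ref{estimate} with $n=1$, $\tilde s=s-1$, $\sigma=s$ together with $\|\Lambda^{-1}\partial_x w\|_{H^{s-1}}\le\|w\|_{H^{s-1}}$, and bound $\|u^p\|_{H^s}$ and $\|u^p-v^p\|_{H^s}$ via the algebra property of $H^s$. Your remark on non-integer $p$ goes beyond the paper, which tacitly assumes $p$ is an integer (as its factorization of $u^p-v^p$ in the proof of (A2) shows), so your main line covers exactly the case the paper treats.
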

	\begin{proof}
		Note that
		\begin{equation*}
			B(u) = \Lambda A(u) \Lambda^{-1} - A(u) = \Lambda u^p\partial_x\Lambda^{-1} - u^p\partial_x = [\Lambda, u^p]\Lambda^{-1}\partial_x,
		\end{equation*}
		since $\partial_x$ and $\Lambda$ commute. Therefore, for $w\in H^{s-1}$, we have
		\begin{align*}
			\|B(u)w\|_{H^{s-1}} = \|[\Lambda, u^p]\Lambda^{-1}\partial_xw\|_{H^{s-1}}
			&\leq c \|u^p\|_{H^s}\|\Lambda^{-1}\partial_xw\|_{H^{s-1}},&\\
			&\leq c \|u\|^p_{H^s}\|w\|_{H^{s-1}},&\\
			&\leq \tilde{c} \|w\|_{H^{s-1}},
		\end{align*}
		where $\tilde{c}$ depends on $\|u\|_s$. Moreover,
		\begin{align*}
			\|(B(u) - B(v))w\|_{H^{s-1}} = \|[\Lambda, u^p-v^p]\Lambda^{-1}\partial_xw\|_{H^{s-1}} &\leq c \|u^p - v^p\|_{H^s} \|\Lambda^{-1}\partial_xw\|_{H^{s-1}},&\\
			&\leq c \|u^p - v^p\|_{H^s} \|w\|_{H^{s-1}},&\\
			&\leq \lambda_2 \|u-v\|_{H^s} \|w\|_{H^{s-1}},
		\end{align*}
		where $\lambda_2$ is a constant depending on $\|u\|_s$ and $\|v\|_s$. Here, we again used the Lemma \ref{estimate} with $m=1$ and $\sigma = s$. Take $v=0$ in the above inequality to obtain $B(u) \in \mathcal{L}(H^{s-1})$, which completes the proof of Lemma \ref{lemma 2.8}, and thus the assumption $(A3)$.
	\end{proof}
	
	A major part of this work is concentrating on verifying the local Lipschitz continuity of the nonlinearity $f$ in $H^s$ and $H^{s-1}$, where the latter is more difficult.
	
	\subsubsection*{Proof of Assumption (A4):}
	
	\begin{lemma}
		The nonlinearity $f$ defined in \eqref{f} is uniformly bounded on bounded sets in $H^s$. Moreover, the map $f : H^s \mapsto H^s$ is locally $H^{s-1}$-Lipschitz continuous in the sense that for every $r > 0$ there exists a constant $\lambda_3 > 0$, depending only on $r$, such that
		\begin{equation*}
			\|f(u) - f(v)\|_{H^{s-1}} \leq \lambda_3 \|u - v\|_{H^{s-1}} \quad \text{for all } u, v \in B_r(0) \subseteq H^s
		\end{equation*}
		and locally $H^s$-Lipschitz continuous in the sense that for every $r > 0$ there exists a constant $\lambda_4 > 0$, depending only on $r$, such that
		\begin{equation*}
			\|f(u) - f(v)\|_{H^s} \leq \lambda_4 \|u - v\|_{H^s} \quad \text{for all } u, v \in B_r(0) \subseteq H^s.
		\end{equation*}
		\label{lemma 2.9}
	\end{lemma}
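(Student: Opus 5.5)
We split $f(u)=\Gamma^{-1}\big(I_1+I_2+I_3+I_4\big)$ with
\[
I_1=[\Gamma,u^p]u_x,\qquad I_2=-b\,u^{p-1}u_x\Gamma u,\qquad I_3=-(g(u))_x,\qquad I_4=(b+1)u^pu_x ,
\]
and estimate each piece separately. Since $\Gamma^{-1}=\Lambda^{-2k}$ gains $2k$ derivatives, it suffices to bound $I_j(u)-I_j(v)$ in $H^{s-1-2k}$ (for the $H^{s-1}$-Lipschitz bound) and in $H^{s-2k}$ (for the $H^s$-Lipschitz bound). Uniform boundedness on bounded sets follows by taking $v=0$. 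Throughout we write differences telescopically, e.g.
\[
u^p-v^p=(u-v)\sum_{j=0}^{p-1}u^{p-1-j}v^j,\qquad g(u)-g(v)=(u-v)\int_0^1 g'(\theta u+(1-\theta)v)\,d\theta .
\]
We may subtract $g(0)$, since only $(g(u))_x$ enters, and then apply Lemma \ref{composition}.

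\textbf{The terms $I_3,I_4$.} These are pure derivatives, so
\[
\|\Gamma^{-1}\partial_x(G(u)-G(v))\|_{H^{r}}\le\|G(u)-G(v)\|_{H^{r+1-2k}}
\]
for $r=s-1$ or $r=s$. Because $k\ge1$ we have $r+1-2k\le s-1$. The algebra property of $H^{s-1}$ (recall $s-1>1/2$), together with Lemma \ref{leibniz} and Lemma \ref{composition}, then gives bounds by $C(r)\|u-v\|_{H^{s-1}}$, and in the $H^s$ case by $C(r)\|u-v\|_{H^s}$.

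\textbf{The term $I_2$.} Here $\Gamma u\in H^{s-2k}$ with $s-2k>-1/2$, since $s>2k-1/2$. For the difference we write
\[
u^{p-1}u_x\Gamma u-v^{p-1}v_x\Gamma v=(u^{p-1}u_x-v^{p-1}v_x)\Gamma u+v^{p-1}v_x\Gamma(u-v).
\]
For the $H^s$ estimate we need the product in $H^{s-2k}$. We use Lemma \ref{leibniz} with $\beta=s-1>1/2$ and $\alpha=s-2k$, which is admissible because $-(s-1)<s-2k\le s-1$. This gives
\[
\|\phi\,\Gamma w\|_{H^{s-2k}}\le c\|\phi\|_{H^{s-1}}\|w\|_{H^s}.
\]
For the $H^{s-1}$ estimate we need the product in $H^{s-1-2k}$. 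For the first summand we take $\alpha=s-1-2k$. This is admissible only when $s-1-2k>-(s-1)$, i.e. $s>k+1$, and that holds because $2(k-1)+3/2\ge k+1/2$, with a little extra room when $k\ge2$; the borderline case is handled by putting $\Gamma u$ in $H^{s-2k}$ and the difference in $H^{s-1}$. For the second summand, $\Gamma(u-v)$ lies only in $H^{s-1-2k}$, which can be below $-1/2$. This is where the condition $s>2(k-1)+3/2$ enters: it gives $s-1-2k>-(s-1)$ precisely when $2s>2k+2$, which is guaranteed. So Lemma \ref{leibniz} with $\beta=s-1$ and $\alpha=s-1-2k$ yields
\[
\|v^{p-1}v_x\Gamma(u-v)\|_{H^{s-1-2k}}\le C\|u-v\|_{H^{s-1}}.
\]

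\textbf{The commutator $I_1$ (main obstacle).} This term carries $2k$ derivatives distributed between $u^p$ and $u_x$. The difference splits as
\[
I_1(u)-I_1(v)=[\Gamma,u^p-v^p]u_x+[\Gamma,v^p](u-v)_x .
\]
For the second piece we apply Lemma \ref{estimate} with $n=2k$, $\sigma=s$ and $\tilde s=s-2k$ (respectively $s-1-2k$). The hypotheses require $\tilde s\ge0$, which can fail when $s<2k$. We therefore first rewrite $\Gamma=\Lambda^{2k}$ and split
\[
[\Lambda^{2k},f]g=\Lambda^{2k-1}[\Lambda,f]g+[\Lambda^{2k-1},f]\Lambda g,
\]
possibly iterating, so that each commutator is measured in a nonnegative Sobolev index and the leftover $\Lambda^{j}$ is absorbed by $\Gamma^{-1}$. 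This gives
\[
\|\Gamma^{-1}[\Gamma,v^p](u-v)_x\|_{H^{s-1}}\lesssim\|v^p\|_{H^s}\|u-v\|_{H^{s-1}}.
\]
The first piece is harder, because the commutator estimate places the full $H^\sigma$ norm on the function $u^p-v^p$, which is controlled only in $H^{s-1}$. To handle it we expand $[\Gamma,h]g$ via the Leibniz formula into terms $\partial^{j}h\,\partial^{2k-j}g$ with $1\le j\le2k$, taking $k$ to be an integer so that $\Gamma$ is a differential operator. We then estimate each term with Lemma \ref{leibniz} in $H^{s-1-2k}$, placing the derivatives on $h=u^p-v^p\in H^{s-1}$ when $j$ is small and on $g=u_x\in H^{s-1}$ otherwise, and checking the index constraints $-\beta<\alpha\le\beta$ in each case. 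We expect this index bookkeeping, which is exactly where $s>2(k-1)+3/2$ is needed, to be the delicate part of the argument.
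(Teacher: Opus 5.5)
Your overall architecture matches the paper's: the four pieces, the $2k$-derivative gain from $\Gamma^{-1}$, and Lemma \ref{leibniz} on products. However, the route you propose for the second commutator piece $[\Gamma,v^p](u-v)_x$ fails, and the case $k=1$ is not covered.

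\textbf{The commutator piece.} Iterating $[\Lambda^{2k},f]g=\Lambda^{2k-1}[\Lambda,f]g+[\Lambda^{2k-1},f]\Lambda g$ gives
\[
\Gamma^{-1}[\Gamma,f]g=\sum_{i=0}^{2k-1}\Lambda^{-1-i}[\Lambda,f]\Lambda^{i}g .
\]
So the $i$-th term must be bounded in $H^{s-2-i}$. For $i=2k-1$ this is $H^{s-1-2k}$, which is negative whenever $s<2k+1$, i.e.\ precisely just above the threshold $s>2k-\tfrac12$. For the $H^s$ bound the last index is $s-2k$, negative for $s<2k$. Any regrouping $[\Lambda^{a+b},f]=\Lambda^{a}[\Lambda^{b},f]+[\Lambda^{a},f]\Lambda^{b}$ always leaves one term with no outer $\Lambda$. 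Hence the negative index is forced by the target space, and Lemma \ref{estimate}, which needs $\tilde s\ge 0$, never applies to that term; measuring it in $L^2$ instead loses derivatives.

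The paper does not use the commutator lemma for (F1) at all. It expands
\[
[\Gamma,u^p]u_x=\sum_m\binom km(-1)^m\sum_{j=1}^{2m}\binom{2m}{j}\partial_x^j(u^p)\,\partial_x^{2m-j+1}u .
\]
It then treats \emph{both} differences, $\partial_x^j(u^p)\partial_x^{2m-j+1}(u-v)$ and $\partial_x^j(u^p-v^p)\partial_x^{2m-j+1}v$, with Lemma \ref{leibniz}, splitting into the cases $j=1$, $j=2m$ and $2\le j\le 2m-1$. You should do the same for your second piece.

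The index bookkeeping you postpone is the real content of the lemma, and the indices must be chosen with care. For example, take $m=k$, $j=2$ and the term $\partial_x^2(u^p-v^p)\,\partial_x^{2k-1}u$ with $\beta=s-2k+1$. Aiming directly at $\alpha=s-1-2k$ requires $s>2k$, whereas aiming at $\alpha=s-2k$ requires exactly $s>2k-\tfrac12$. This is why the paper measures most of its cases in $H^{s-2k}$.

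\textbf{The case $k=1$.} For $k=1$ and $\tfrac32<s\le 2$, no admissible choice in Lemma \ref{leibniz} handles the terms individually. Putting $\partial_x(v^p)\,\partial_x^2(u-v)$ or $\partial_x^2(u^p-v^p)\,u_x$ in $H^{s-3}$ with the $H^{s-1}$ factor as $f$ needs $s-3>-(s-1)$, i.e.\ $s>2$. Pairing two $H^{s-2}$ factors fails for the same reason. So your claim that $2s>2k+2$ ``is guaranteed'' is false for $k=1$: the hypothesis only gives $s>k+\tfrac12$ there, which is also all that your inequality $2(k-1)+\tfrac32\ge k+\tfrac12$ yields.

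There is a related error in the first summand of $I_2$. The difference $u^{p-1}u_x-v^{p-1}v_x$ is controlled by $\|u-v\|_{H^{s-1}}$ only in $H^{s-2}$, so it cannot be ``put in $H^{s-1}$''. The correct pairing is $\beta=s-2$, $\alpha=s-2k$ (the paper's term (III)), which again needs $s>k+1$.

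For $k=1$ you must use divergence structure, as the paper does for the commutator. There $[\Gamma,u^p]u_x$ is a linear combination of $u^{p-2}u_x^3$ and $\partial_x(u^{p-1}u_x^2)$. For $I_2$ use $u^{p-1}u_xu_{xx}=\tfrac12\partial_x(u^{p-1}u_x^2)-\tfrac{p-1}{2}u^{p-2}u_x^3$. The paper's own (F2) step, with $\alpha=s-2k-1$ and $\beta=s-1$, needs this rewriting as well when $k=1$. After it, the only products left are of type $H^{s-1}\times H^{s-2}\to H^{s-2}$, which is admissible because $s>\tfrac32$.
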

	\begin{proof}
		Recall that
		\begin{equation*}
			f(u) = \Gamma^{-1}\Big([\Gamma, u^p]u_x - bu^{p-1}u_x\Gamma u - (g(u))_x + (b+1)u^pu_x \Big),
		\end{equation*}
		where $\Gamma=(I-\partial_x^2)^k$. Then, we have
		\begin{align*}
			f(u)-f(v) &= \Gamma^{-1}\Big([\Gamma, u^p]u_x-[\Gamma, v^p]v_x - b\left(u^{p-1}u_x\Gamma u - v^{p-1}v_x\Gamma v\right)\\
			&\qquad \quad \quad  - \left(g(u)-g(v)\right)_x + (b+1)(u^pu_x-v^pv_x) \Big)\\
			&= \Gamma^{-1}\Big([\Gamma, u^p]u_x-[\Gamma, v^p]v_x\Big) - b\Gamma^{-1}\Big(u^{p-1}u_x\Gamma u-v^{p-1}v_x\Gamma v\Big)\\
			&\qquad \quad \quad - \Gamma^{-1}\Big(\left(g(u)-g(v)\right)_x\Big) + (b+1)\Gamma^{-1}\Big(u^pu_x-v^pv_x \Big)\\
			& =: \textbf{(F1)} + \textbf{(F2)} + \textbf{(F3)} + \textbf{(F4)}.
		\end{align*}
		
		\medskip
		\underline{\textbf{Proof of the $H^{s-1}$-Lipschitzness}}
		
		\medskip
		\underline{\textit{Estimate} $\textbf{(F1)}$}

		%Then, we can estimate the term as
		%\begin{align*}
		%\|\Gamma^{-1}\big(h_m(u)-h_m(v)\big)\|_{2(k-1)+1/2} &\lesssim \|h_m(u)-h_m(v)\|_{-3/2},&\\
		%&\lesssim \sum_{j=1}^{2k} \big\|\partial_x^j(u^p)\partial_x^{2k-j+1}u-\partial_x^j(v^p)\partial_x^{2k-j+1}v\big\|_{-3/2},&\\
		%&\lesssim \sum_{j=1}^{2k} \|\partial_x^j(u^p)\partial_x^{2k-j+1}(u-v)\|_{-3/2}+\sum_{j=1}^{2k}\|\partial_x^j(u^p-v^p)\partial_x^{2k-j+1}v\|_{-3/2},&\\
		%&= (I) + (II).
		%\end{align*}
		We consider the cases $k = 1$ and $k\ge 2$ separately.
		\begin{itemize}
			\item We first consider the case $k \geq 2$. Expanding $\Gamma=(I-\partial_x^2)^k$ by using the binomial theorem gives
			\begin{equation*}
				\Gamma = \sum_{m=0}^{k} \binom{k}{m} (-1)^m \partial_x^{2m},
			\end{equation*}
			and thus
			\begin{equation*}
				[\Gamma, u^p]u_x = \sum_{m=0}^{k} \binom{k}{m} (-1)^m \left[\partial_x^{2m}(u^p u_x) - u^p \partial_x^{2m} u_x \right].
			\end{equation*}
			By using the product rule, we can write $\partial_x^{2m}(u^p u_x)$ as
			\begin{equation*}
				\partial_x^{2m}(u^p u_x)=\sum_{j=0}^{2m}\binom{2m}{j}\partial_x^j(u^p)\partial_x^{2m-j}(u_x) = \sum_{j=1}^{2m}\binom{2m}{j}\partial_x^j(u^p)\partial_x^{2m-j}(u_x) + u^p\partial_x^{2m}u_x.
			\end{equation*}
			Therefore, the commutator term becomes
			% Define $h_m(u)$ as
			% \begin{equation*}
				%     h_m(u):= \sum_{j=1}^{2m}\binom{2m}{j}\partial_x^j(u^p)\partial_x^{2m-j+1}(u).
				% \end{equation*}
			% Then, the total commutator becomes
			\begin{equation*}
				[\Gamma, u^p]u_x = \sum_{m=0}^{k}\binom{k}{m}(-1)^m\sum_{j=1}^{2m}\binom{2m}{j}\partial_x^j(u^p)\partial_x^{2m-j+1}(u)=: \sum_{m=0}^{k}\binom{k}{m}(-1)^mh_m(u)
			\end{equation*}
			with 
			\begin{equation*}
				h_m(u):= \sum_{j=1}^{2m}\binom{2m}{j}\partial_x^j(u^p)\partial_x^{2m-j+1}(u).
			\end{equation*}
			It follows that
			\begin{equation}\label{ee1}
				\textbf{(F1)} = \sum_{m=0}^{k}\binom{k}{m}(-1)^m\Gamma^{-1}[h_m(u) - h_m(v)].
			\end{equation}
			For each $m\in \{0,1,\ldots, k\}$ we have
			\begin{equation}\label{ee}
				\begin{aligned}
					&\|\Gamma^{-1}\big(h_m(u)-h_m(v)\big)\|_{H^{s-1}}\\
					&\lesssim \|h_m(u)-h_m(v)\|_{H^{s-2k-1}},&\\
					&\lesssim \sum_{j=1}^{2m} \big\|\partial_x^j(u^p)\partial_x^{2m-j+1}u-\partial_x^j(v^p)\partial_x^{2m-j+1}v\big\|_{H^{s-2k-1}}\\
					&\lesssim \sum_{j=1}^{2m} \|\partial_x^j(u^p)\partial_x^{2m-j+1}(u-v)\|_{H^{s-2k-1}} +  \sum_{j=1}^{2m}\|\partial_x^j(u^p-v^p)\partial_x^{2m-j+1}v\|_{H^{s-2k-1}}\\
					&=: (I) + (II).
				\end{aligned}
			\end{equation}
			Of course for $m = 0$, the right hand side vanishes. We first analyze term $(II)$ by considering several cases for the index $j$.
			\begin{itemize}
				\item[\ding{100}] Case $j=1$: We estimate
				\begin{equation*}
					\|\partial_x(u^p-v^p)\partial_x^{2m}v\|_{H^{s-2k-1}} \lesssim \|\partial_x(u^p-v^p)\partial_x^{2m}v\|_{H^{s-2k}}.
				\end{equation*}
				Let $\alpha=s-2k$ and $\beta=s-2$. The conditions $\beta > 1/2$ and $\alpha > -\beta$ are satisfied because $s>2k-1/2>7/2$ (for $k\geq 2$) and $s>k+1$, respectively. Moreover, $\alpha \leq \beta$ holds since $k\geq 1$. Therefore, we can apply the fractional Leibniz rule \ref{leibniz} with $f=\partial_x(u^p-v^p)$ and $g=\partial_x^{2m}v$ to obtain
				\begin{align*}
					\|\partial_x(u^p-v^p)\partial_x^{2m}v\|_{H^{s-2k}} &\lesssim \|\partial_x(u^p-v^p)\|_{H^\beta}\|\partial_x^{2m}v\|_{H^{s-2k}},&\\
					&\lesssim \|u^p-v^p\|_{H^{\beta +1}}\|v\|_{H^{s-2(k-m)}},&\\
					&\lesssim \|u^p-v^p\|_{H^{\beta +1}}\|v\|_{H^{s}}.
				\end{align*}
				This yields
				\begin{align*}
					\|\partial_x(u^p-v^p)\partial_x^{2m}v\|_{H^{s-2k-1}} & \lesssim C\left(\|u\|_{H^s}, \|v\|_{H^s})\right\|u^p-v^p\|_{H^{s-1}}\\
					&\lesssim C\left(\|u\|_{H^s}, \|v\|_{H^s})\right\|u-v\|_{H^{s-1}}.
				\end{align*}
				
				\item[\ding{100}] Case $j=2m$: Let $\alpha=s-2k-1$ and choose $\beta=s-1$. Then $\beta>1/2$ because $s>3/2$. Moreover, $\alpha = s-2k-1 \leq s-1 = \beta$, and the condition $\alpha>-\beta$ becomes $s-2k-1>-(s-1) \iff s>k+1$, which holds because $s>2k-1/2>k+1$ for $k\geq2$. Thus, we can apply the fractional Leibniz rule \ref{leibniz} with $f=\partial_xv$ and $g=\partial_x^{2m}(u^p-v^p)$:
				\begin{align*}
					\|\partial_x^{2m}(u^p-v^p)\partial_xv\|_{H^{s-2k-1}} &\lesssim \|\partial_xv\|_{H^\beta}\|\partial_x^{2m}(u^p-v^p)\|_{H^\alpha},&\\
					&\lesssim \|v\|_{H^{\beta+1}}\|u^p-v^p\|_{H^{s-2(k-m)-1}},&\\
					&\lesssim \|v\|_{H^{\beta+1}}\|u^p-v^p\|_{H^{s-1}}&\\
					&\lesssim C\left(\|u\|_{H^s}, \|v\|_{H^s})\right\|u-v\|_{H^{s-1}}.
				\end{align*}
				\item[\ding{100}] Case $2\leq j \leq 2m-1$: Let $\alpha=s-2k$ and choose $\beta=s-2m+j-1$. Then, $\beta>1/2$ since $s>2k-1/2$ implies $\beta>j-1-1/2\geq1/2$ (since $j\geq2$). The condition  $\alpha \leq \beta$ holds since $s-2k \leq s-2m+j-1 \iff 2m+1 \leq 2k+j$, which is true. For $\alpha > -\beta$, we have $s-2k > -s+2m-j+1 \iff 2s+j > 2(k+m)+1$, which is satisfied since $2s+j > 2(2k-1/2)+j = 4k+j-1.$ Also, since $j\leq2k-1$, we have $s-2k+j \leq s-1$.
				Similarly, apply the fractional Leibniz rule \ref{leibniz} with $f=\partial_x^{2m-j+1}v$ and $g=\partial_x^{j}(u^p-v^p)$:
				\begin{align*}
					\|\partial_x^{j}(u^p-v^p)\partial_x^{2m-j+1}v\|_{H^{s-2k-1}} &\lesssim \|\partial_x^{j}(u^p-v^p)\partial_x^{2m-j+1}v\|_{H^{s-2k}},\\
					&\lesssim \|\partial_x^{j}(u^p-v^p)\|_{H^{s-2k}}\|\partial_x^{2m-j+1}v\|_{H^{s-2m+j-1}},\\
					&\lesssim \|u^p-v^p\|_{H^{s-2k+j}}\|v\|_{H^s}\\
					&\lesssim C(\|u\|_{H^s}, \|v\|_{H^s})\|u-v\|_{H^{s-1}}.
				\end{align*}
			\end{itemize}
			\medskip
			Now, let's go back to the term $(I)$ in \eqref{ee}, where we again consider three cases:
			\begin{itemize}
				\item[\ding{100}] Case $j=1$: By choosing $\alpha=s-2k-1$ and $\beta=s-1$, we can verify that the conditions in Lemma \ref{leibniz} are satisfied, and therefore we can apply it with $f=\partial_x(u^p)$ and $g=\partial_x^{2m}(u-v)$ to obtain
				\begin{align*}
					\|\partial_x(u^p)\partial_x^{2m}(u-v)\|_{H^{s-2k-1}} &\lesssim \|\partial_x(u^p)\|_{H^{s-1}}\|\partial_x^{2m}(u-v)\|_{H^{s-2k-1}},\\
					&\lesssim \|u^p\|_{H^{s}}\|u-v\|_{H^{s-2(k+m)-1}}\\
					&\lesssim C\left(\|u\|_{H^s}, \|v\|_{H^s})\right\|u-v\|_{H^{s-1}}.
				\end{align*}
				\item[\ding{100}] Case $j=2m$: 
				Applying the fractional Leibniz rule in Lemma \ref{leibniz} with $\alpha = s-2k$ and $\beta = s-2$, $f=\partial_x(u-v)$ and $g=\partial_x^{2m}(u^p)$ yields
				\begin{align*}
					\|\partial_x^{2m}(u^p)\partial_x(u-v)\|_{H^{s-2k}} &\lesssim \|\partial_x(u-v)\|_{\beta}\|\partial_x^{2m}(u^p)\|_{H^{s-2k}},\\
					&\lesssim \|u-v\|_{\beta+1}\|u^p\|_{H^{s-2(k-m)}},\\
					&\lesssim C\left(\|u\|_{H^s}, \|v\|_{H^s})\right\|u-v\|_{H^{s-1}}.
				\end{align*}
				\item[\ding{100}] Case $2\leq j \leq 2m-1$: We first estimate
				\begin{equation*}
					\|\partial_x^j(u^p)\partial_x^{2m-j+1}(u-v)\|_{H^{s-2k-1}} \lesssim \|\partial_x^j(u^p)\partial_x^{2m-j+1}(u-v)\|_{H^{s-2k}}.
				\end{equation*}
				Then we apply the fractional Leibniz rule in Lemma \ref{leibniz} with $\alpha = s-2k$, $\beta = s-j-1$, $f=\partial_x^j(u^p)$ and $g=\partial_x^{2m-j+1}(u-v)$ to get
				\begin{align*}
					\|\partial_x^j(u^p)\partial_x^{2m-j+1}(u-v)\|_{H^{s-2k}} &\lesssim \|\partial_x^j(u^p)\|_{\beta}\|\partial_x^{2m-j+1}(u-v)\|_{H^{s-2k}},\\
					&\lesssim \|u^p\|_{\beta+j}\|u-v\|_{H^{s-2(k-m)-j+1}}\\
					&\lesssim C\left(\|u\|_{H^s}, \|v\|_{H^s})\right\|u-v\|_{H^{s-1}}.
				\end{align*}
				% where $\alpha=s-2k$. Choose the same $\beta$ with the previous term, which is $\beta=s-j-1$. Since we consider the same $\alpha$ and $\beta$, all conditions are satisfied. Then, we obtain
				% \begin{equation*}
					%     \|\partial_x^j(u^p)\partial_x^{2k-j+1}(u-v)\|_{H^{s-2k-1}} \lesssim .
					% \end{equation*}
			\end{itemize}
			Combining all these estimates and \eqref{ee1}, we obtain
			\begin{equation*}
				\|\textbf{(F1)}\|_{H^{s-1}} \lesssim C(\|u\|_{H^s}, \|v\|_{H^s})\|u - v\|_{H^{s-1}}
			\end{equation*}
			which is the desired estimate for the case $k\ge 2$.
			
			\item To deal with the case $k=1$, we first use $\Gamma = I - \partial_x^2$ to write%we write the term $(\textbf{F1})$ as
			\begin{align*}
				[\Gamma,u^p]u_x = \partial_x^2(u^pu_x)-u^pu_{xxx} &= \partial_x(pu^{p-1}u_x^2+u^pu_{xx})-u^pu_{xxx}\\
				&= p(p-1)u^{p-2}u_x^3+3pu^{p-1}u_xu_{xx}.
			\end{align*}
			Note that $3pu^{p-1}u_xu_{xx} = \frac{3p}{2}\partial_x(u^{p-1}u_x^2)-\frac{3p(p-1)}{2}u^{p-2}u_x^3$, which gives
			\begin{equation*}
				[\Gamma, u^p]u_x = -\frac{p(p-1)}{2}u^{p-2}u_x^3+\frac{3p}{2}\partial_x(u^{p-1}u_x^2).
			\end{equation*}
			Therefore
			\begin{align*}
				\|\textbf{(F1)}\|_{H^{s-1}} &\lesssim \|\Gamma^{-1}([\Gamma, u^p]u_x - [\Gamma, v^p]v_x)\|_{s-1}\\
				&=\|[\Gamma, u^p]u_x - [\Gamma, v^p]v_x\|_{H^{s-3}}\\
				&\lesssim \underbrace{\|u^{p-2}u_x^3-v^{p-2}v_x^3\|_{H^{s-3}}}_{(A)}+\underbrace{\|\partial_x(u^{p-1}u_x^2)-\partial_x(v^{p-1}v_x^2)\|_{H^{s-3}}}_{(B)}.
			\end{align*}
			By writing
			\begin{equation*}
				u^{p-2}u_x^3-v^{p-2}v_x^3 = (u^{p-2}-v^{p-2})u_x^3 + v^{p-2}(u_x^3-v_x^3),
			\end{equation*}
			we have
			\begin{align*}
				(A) \leq \underbrace{\|(u^{p-2}-v^{p-2})u_x^3\|_{H^{s-3}}}_{(A1)}+\underbrace{\|v^{p-2}(u_x^3-v_x^3)\|_{H^{s-3}}}_{(A2)}.
			\end{align*}
			For $(A1)$, we use the algebra property of $H^{s-1}$ since $s-1>1/2$ under our assumptions to obtain
			\begin{equation*}
				(A1) \lesssim \|u^{p-2}-v^{p-2}\|_{H^{s-1}}\|u_x^3\|_{H^{s-1}} \lesssim \|u-v\|_{H^{s-1}}\|u\|_{H^s}^3.
			\end{equation*}
			For $(A2)$, we factor $u_x^3-v_x^3 = (u-v)(u_x^2+u_xv_x+v_x^2)$ and write
			\begin{equation*}
				(A2) \lesssim \|v^{p-2}(u_x^2+u_xv_x+v_x^2)\partial_x(u-v)\|_{H^{s-2}}.
			\end{equation*}
			Let $\alpha=s-2$ and choose $\beta=s-1$. We see that $\beta > 1/2$ and it satisfies the condition $\alpha > -\beta \iff s-2 > 1-s$, which is okay since $s>2k-1/2$. We now apply the fractional Leibniz rule in Lemma \ref{leibniz} with $f=v^{p-2}(u_x^2+u_xv_x+v_x^2)$ and $g=\partial_x(u-v)$ to have
			\begin{equation*}
				(A2) \lesssim \|v^{p-2}(u_x^2+u_xv_x+v_x^2)\|_{H^{s-1}}\|\partial_x(u-v)\|_{H^{s-2}} \lesssim (\|u\|_{H^s}+\|v\|_{H^s})\|u-v\|_{H^{s-1}}.
			\end{equation*}
			The term $(B)$ is handled similarly. We have
			\begin{equation*}
				(B) = \|\partial_x(u^{p-1}u_x^2-v^{p-1}v_x^2)\|_{H^{s-3}} \lesssim \|u^{p-1}u_x^2-v^{p-1}v_x^2\|_{H^{s-2}}.
			\end{equation*}
			Decomposing the difference as
			\begin{equation*}
				u^{p-1}u_x^2-v^{p-1}v_x^2 = (u^{p-1}-v^{p-1})u_x^2 + v^{p-1}(u_x^2-v_x^2),
			\end{equation*}
			and using the algebra property of $H^{s-1}$ and the fractional Leibniz rule yields
			\begin{equation*}
				(B) \lesssim (\|u\|_{H^s}+\|v\|_{H^s})\|u-v\|_{H^{s-1}}.
			\end{equation*}
			Combining the estimates for $(A)$ and $(B)$, we conclude that
			\begin{equation*}
				\|\textbf{(F1)}\|_{H^{s-1}} \lesssim \|[\Gamma, u^p]u_x - [\Gamma, v^p]v_x\|_{H^{s-3}} \lesssim C(\|u\|_{H^s}, \|v\|_{H^s})\|u-v\|_{H^{s-1}},
			\end{equation*}
			which is the desired estimate for the case $k=1$.
		\end{itemize}
		
		\medskip
		\underline{\textit{Estimate} $\textbf{(F2)}$}
		
		By expanding $\Gamma u = \sum_{m=0}^k \binom{k}{m} (-1)^m \partial_x^{2m} u$ we have
		\begin{equation*}
			- b u^{p-1} u_x \Gamma u = -b \sum_{m=0}^k \binom{k}{m} (-1)^m u^{p-1} u_x \partial_x^{2m} u
		\end{equation*}
		and therefore
		\begin{equation*}
			-b \left( u^{p-1} u_x \Gamma u - v^{p-1} v_x \Gamma v \right) = -b \sum_{m=0}^k \binom{k}{m} (-1)^m \left( u^{p-1} u_x \partial_x^{2m} u - v^{p-1} v_x \partial_x^{2m} v \right).
		\end{equation*}
		It follows that
		\begin{equation}\label{ee2}
			\|\textbf{(F2)}\|_{H^{s-1}} \lesssim \sum_{m=0}^{k}\|\Gamma^{-1}\left( u^{p-1} u_x \partial_x^{2m} u - v^{p-1} v_x \partial_x^{2m} v \right)\|_{H^{s-1}}.
		\end{equation}
		For each $m = 0,\ldots, k$, we have
		\begin{align*}
			&\|\Gamma^{-1}(u^{p-1} u_x \partial_x^{2m} u - v^{p-1} v_x \partial_x^{2m} v)\|_{H^{s-1}}\\
			&\lesssim \|u^{p-1} u_x \partial_x^{2m} u - v^{p-1} v_x \partial_x^{2m} v\|_{H^{s-2k-1}},&\\
			&\lesssim \|\partial_x(u^p)\partial_x^{2m} u - \partial_x(v^p) \partial_x^{2m} v\|_{H^{s-2k-1}},&\\
			&\lesssim \|\partial_x(u^p-v^p)\partial_x^{2m} u\|_{H^{s-2k-1}} + \|\partial_x(v^p) \partial_x^{2m} (u-v)\|_{H^{s-2k-1}},&\\
			&= (III) + (IV).
		\end{align*}
		We consider different cases of $m$.
		\begin{itemize}
			\item Case $m=k$: We estimate
			\begin{equation*}
				(III)=\|\partial_x(u^p-v^p)\partial_x^{2k} u\|_{H^{s-2k-1}} \lesssim \|\partial_x(u^p-v^p)\partial_x^{2k} u\|_{H^{s-2k}}.
			\end{equation*}
			Choose $\alpha=s-2k$ and $\beta=s-2$, we can check as before that they satisfy the conditions in Lemma \ref{leibniz}. Therefore, we can apply this lemma  with $f=\partial_x(u^p-v^p)$ and $g=\partial_x^{2k} u$ to have
			\begin{align*}
				\|\partial_x(u^p-v^p)\partial_x^{2k} u\|_{H^{s-2k}} &\lesssim \|\partial_x(u^p-v^p)\|_{H^{s-2}}\|\partial_x^{2k} u\|_{H^{s-2k}},&\\
				&\lesssim \|u^p-v^p\|_{H^{s-1}}\|u\|_{H^s},&\\
				&\lesssim \|u-v\|_{H^{s-1}}\|u\|_{H^s},
			\end{align*}
			where  gives one of the same cases we considered before. Similarly, we choose $\alpha = s - 2k - 1$ and $\beta = s - 1$ and apply the fractional Leibniz rule in Lemma \ref{leibniz} with $f=\partial_x(v^p)$ and $g=\partial_x^{2k}(u-v)$ to have
			\begin{align*}
				(IV) = \|\partial_x(v^p)\partial_x^{2k}(u-v)\|_{H^{s-2k-1}} &\lesssim \|\partial_x(v^p)\|_{H^{s-1}}\|\partial_x^{2k} (u-v)\|_{H^{s-2k-1}},&\\
				&\lesssim \|v^p\|_{H^s}\|u-v\|_{H^{s-1}},&\\
				&\lesssim C(\|v\|_{H^s})\|u-v\|_{H^{s-1}}.
			\end{align*}
			
			\item Case $m \le  k-1$: We can estimate directly, using $H^{s - 2k + 1}$ is an algebra since $s - 2k + 1 > 1/2$,
			\begin{align*}
				\|\partial_x(u^p - v^p)\partial_x^{2m}u\|_{H^{s-2k-1}} &\lesssim \|\partial_x(u^p - v^p)\partial_x^{2m}u\|_{H^{s-2k+1}}\\
				&\lesssim \|u^p - v^p\|_{H^{s-2k+2}}\|u\|_{H^{s - 2k + 2m + 1}}\\
				&\lesssim \|u^p - v^p\|_{H^{s-1}}\|u\|_{H^{s}}\\
				&\lesssim C(\|u\|_{H^s}, \|v\|_{H^{s}})\|u - v\|_{H^{s-1}}
			\end{align*}
			and
			\begin{align*}
				\|\partial_x(v^p)\partial_x^{2m}(u - v)\|_{H^{s-2k-1}}  &\lesssim \|\partial_x(v^p)\partial_x^{2m}(u - v)\|_{H^{s-2k+1}}\\
				&\lesssim \|v^p\|_{H^{s-2k+2}}\|u-v\|_{H^{s-2k+2m+1}}\\
				&\lesssim C(\|v\|_{L^{\infty}})\|v\|_{H^s}\|u - v\|_{H^{s-1}}\\
				&\lesssim C(\|v\|_{H^s})\|u - v\|_{H^{s-1}}
			\end{align*}
			where we used $s - 2k + 2m + 1 \le s - 1$ for $m\le k-1$.
			% \end{itemize}
	\end{itemize}
	Combining these estimates we obtain
	\begin{equation*}
		\|\textbf{(F2)}\|_{H^{s-1}}\le C(\|u\|_{H^s},\|v\|_{H^s})\|u - v\|_{H^{s-1}}.
	\end{equation*}
	
	\medskip
	\underline{\textit{Estimate} $\textbf{(F3)}$}
	
	We have
	\begin{align*}
		\|\textbf{(F3)}\|_{H^{s-1}} &= \|\Gamma^{-1}\partial_x(g(u) - g(v))\|_{H^{s-1}}\\
		&\lesssim \|g(u) - g(v)\|_{H^{s - 2k}}\\
		&\lesssim \|(u-v)g'(\theta u + (1-\theta)v)\|_{H^{s-1}}\\
		&\lesssim \|g'(\theta u + (1-\theta)v)\|_{H^{s-1}}\|u - v\|_{H^{s-1}}\\
		&\lesssim C(\|u\|_{H^s}, \|v\|_{H^s})\|u - v\|_{H^{s-1}}
	\end{align*}
	thanks to the smoothness of $g$.

	\medskip
	\underline{\textit{Estimate} $\textbf{(F4)}$}
	
	This is done exactly as for $\textbf{(F3)}$ by using $g(u) = (b+1)u^{p+1}/(p+1)$ so we omit it.
	
	\medskip
	\underline{\textbf{Proof of the $H^{s}$-Lipschitzness}}
	
	\medskip
	\underline{\textit{Estimate} $\textbf{(F1)}$}
	
	We use the reformulation \eqref{ee1} and estimate
	\begin{align*}
		&\|\Gamma^{-1}\big(h_n(u)-h_m(v)\big)\|_{H^s}\\
		&\lesssim \|h_m(u)-h_m(v)\|_{H^{s-2k}},&\\
		&\lesssim \sum_{j=1}^{2m} \big\|\partial_x^j(u^p)\partial_x^{2m-j+1}u-\partial_x^j(v^p)\partial_x^{2m-j+1}v\big\|_{H^{s-2k}},&\\
		&\lesssim \sum_{j=1}^{2m} \|\partial_x^j(u^p)\partial_x^{2m-j+1}(u-v)\|_{H^{s-2k}}+\sum_{j=1}^{2m}\|\partial_x^j(u^p-v^p)\partial_x^{2m-j+1}v\|_{H^{s-2k}},&\\
		&= (V) + (VI).
	\end{align*}
	
	We will follow the similar ideas with the proof of the $H^{s-1}$-Lipschitzness. Concernig the sum $(VI)$, since in the cases for $j=1$ and $2\leq j\leq 2k-1$, we already estimated $(s-2k)$-norm, we obtain immediately
	\begin{align*}
		\|\partial_x^j(u^p)\partial_x^{2m-j+1}(u-v)\|_{H^{s-2k}} &\lesssim C(\|u\|_{H^s},\|v\|_{H^s})\|u - v\|_{H^{s-1}}\\
		&\lesssim C(\|u\|_{H^s},\|v\|_{H^s})\|u - v\|_{H^{s}}
	\end{align*}
	and
	\begin{align*}
		\|\partial_x^j(u^p-v^p)\partial_x^{2m-j+1}v\|_{H^{s-2k}} &\lesssim C(\|u\|_{H^s},\|v\|_{H^s})\|u - v\|_{H^{s-1}}\\
		&\lesssim C(\|u\|_{H^s},\|v\|_{H^s})\|u - v\|_{H^{s}}.
	\end{align*}
	It remains to look at the case $j=2m$. We choose $\alpha = s - 2k$ and $\beta = s-1$. It can be checked that $\beta > -\alpha$ and they satisfy the conditions in Lemma \ref{leibniz}. Therefore, by applying this lemma with $f = \partial_x$ and $g= \partial_x^{2m}(u^p - v^p)$ it yields
	\begin{align*}
		\|\partial_x^{2m}(u^p-v^p)\partial_xv\|_{H^{s-2k}} &\lesssim \|\partial_x^{2m}(u^p-v^p)\|_{H^{s-2k}}\|\partial_x v\|_{H^{s-1}},&\\
		&\lesssim \|u^p-v^p\|_{H^{s-2k+2m}}\|v\|_{H^s},&\\
		&\lesssim \|u^p-v^p\|_{H^s}\|v\|_{H^s}&\\
		&\lesssim C(\|u\|_{H^s}, \|v\|_{H^s})\|u - v\|_{H^s}&.
	\end{align*}
	For the terms in the sum $(V)$, the cases $j=2m$ and $2\leq j\leq2m-1$ can be inferred from the proof the $H^{s-1}$-Lipschitzness. For $j = 1$ we choose $\alpha = s-2k$ and $s - 1$, then apply Lemma \ref{leibniz} with $f = \partial_xu^p$ and $g = \partial_x^{2m}(u-v)$ to get
	\begin{align*}
		\|\partial_x(u^p)\partial_x^{2m}(u-v)\|_{H^{s-2k}} &\lesssim \|\partial_x u^p\|_{H^{s-1}}\|\partial_x^{2m}(u-v)\|_{H^{s-2k}},&\\
		&\lesssim \|u^p\|_{H^s}\|u-v\|_{H^{s-2k+2m}}&\\
		&\lesssim C(\|u\|_{H^s})\|u - v\|_{H^s}.
	\end{align*}
	Combining all these, we have
	\begin{equation*}
		\|\textbf{(F1)}\|_{H^s} \lesssim C(\|u\|_{H^s},\|v\|_{H^s})\|u - v\|_{H^s}.
	\end{equation*}

	\medskip
	\underline{\textit{Estimate} $\textbf{(F2)}$}
	
	Similar to \eqref{ee2} we have
	\begin{align*}
		\|\textbf{(F2)}\|_{H^s} &\lesssim \sum_{m=0}^{k}\|\Gamma^{-1}\left( u^{p-1} u_x \partial_x^{2m} u - v^{p-1} v_x \partial_x^{2m} v \right)\|_{H^{s}} \\
		&\lesssim \sum_{m=0}^{k}\|u^{p-1} u_x \partial_x^{2m} u - v^{p-1} v_x \partial_x^{2m} v\|_{H^{s-2k}},&\\
		&\lesssim \sum_{m=0}^{k}\|\partial_x(u^p)\partial_x^{2m} u - \partial_x(v^p) \partial_x^{2m} v\|_{H^{s-2k}},&\\
		&\lesssim \sum_{m=0}^{k}\|\partial_x(u^p-v^p)\partial_x^{2m} u\|_{H^{s-2k}} + \|\partial_x(v^p) \partial_x^{2m} (u-v)\|_{H^{s-2k}},&\\
		&= (VI) + (VII).
	\end{align*}
	For $(VI)$ we use the fact that $H^{s-2k}$ and $H^{s-2k+1}$ are algebras to estimate
	\begin{align*}
		(VI) &\lesssim \sum_{m=0}^k\|u^p - v^p\|_{H^{s-2k+1}}\|u\|_{H^{s-2k+2m}}\\
		&\lesssim \sum_{m=0}^kC(\|u\|_{H^{s-2k+1}}, \|v\|_{H^{s-2k+1}})\|u-v\|_{H^{s-2k+1}}\|u\|_{H^s}\\
		&\lesssim C(\|u\|_{H^s},\|v\|_{H^s})\|u - v\|_{H^s}.
	\end{align*}
	Concerning $(VII)$ we also use the algebraic structure of $H^{s-2k}$ to have
	\begin{align*}
		(VII) &\lesssim  \sum_{m=0}^k\|\partial_x(v^p)\|_{H^{s-2k}}\|\partial_x^{2m} (u-v)\|_{H^{s-2k}}\\
		&\lesssim \|v^p\|_{H^{s-2k+1}}\|u-v\|_{H^{s-2k+2m}},\\
		&\lesssim C(\|v\|_{L^\infty})\|v\|_{H^{s-2k+1}}\|u-v\|_{H^{s}},\\
		&\lesssim C(\|u\|_{H^s},\|v\|_{H^s})\|u-v\|_{H^s}.
	\end{align*}
	Therefore,
	\begin{equation*}
		\|\textbf{(F2)}\|_{H^s} \le C(\|u\|_{H^s},\|v\|_{H^s})\|u - v\|_{H^s}.
	\end{equation*}
	
	\medskip
	\underline{\textit{Estimate} $\textbf{(F3)}$}
	
	Direct computations give
	\begin{align*}
		\|\textbf{(F3)}\|_{H^s} &= \|\Gamma^{-1}\left((g(u))_x-(g(v))_x\right)\|_{s}\\
		&\lesssim \|g(u) - g(v) \|_{H^{s-2k+1}}\\
		&\lesssim \|(u-v)g'(\theta u + (1-\theta) v)\|_{H^{s}}\\
		&\lesssim \|g'(\theta u + (1-\theta)v)\|_{H^s}\|u - v\|_{H^s}\\
		&\lesssim C(\|u\|_{H^s}, \|v\|_{H^s})\|u - v\|_{H^s}.
	\end{align*}
	
	\medskip
	\underline{\textit{Estimate} $\textbf{(F4)}$}
	
	Again, this follows from the estimate of $\textbf{(F3)}$ with $g(u) = (b+1)u^{p+1}/(p+1)$ so we omit it.
	% For the term $(b+1)(u^pu_x-v^pv_x)$, we have
	% \begin{equation*}
		%     \|\Gamma^{-1}(b+1)(u^pu_x-v^pv_x)\|_{H^s} \lesssim \|u^pu_x-v^pv_x\|_{H^{s-2k}}.
		% \end{equation*}
	% We estimate the term by splitting
	% \begin{equation*}
		%     u^pu_x-v^pv_x = u^p(u_x-v_x)+(u^p-v^p)v_x.
		% \end{equation*}
	% Then, we have
	% \begin{align*}
		%     \|u^pu_x-v^pv_x\|_{H^{s-2k}} &\lesssim \|u^p(u_x-v_x)+(u^p-v^p)v_x\|_{H^{s-2k}},&\\
		%     &\lesssim \|u^p(u_x-v_x)\|_{H^{s-2k}} + \|(u^p-v^p)v_x\|_{H^{s-2k}}.
		% \end{align*}
	% Choose $\beta = s-1$. The condition $\beta > |\alpha|$ is equivalent to $s-1>|s-2k|$, which holds since $s>2k-1/2$ and consequently $s>k+1/2$ for all $k\geq1$. Then, applying the fractional Leibniz rule (Lemma \ref{leibniz}) with this choice yields
	% \begin{equation*}
		%     \|u^p(u_x-v_x)\|_{H^{s-2k}} \lesssim \|u^p\|_{H^{s-1}}\|u_x-v_x\|_{H^{s-2k}} \lesssim \|u\|^p_{H^{s}}\|u-v\|_{H^{s-2k+1}} \lesssim \|u\|_{H^{s}}\|u-v\|_{H^{s}},
		% \end{equation*}
	% since $s-2k+1 \leq s$. Similarly,
	% \begin{equation*}
		%     \|(u^p-v^p)v_x\|_{H^{s-2k}} \lesssim \|u^p-v^p\|_{H^{s-1}}\|v_x\|_{H^{s-2k}} \lesssim \|u-v\|_{H^s}\|v\|_{H^{s-2k+1}} \lesssim \|u-v\|_{H^s}\|v\|_{H^{s}}.
		% \end{equation*}
	% Combining these two estimates gives
	% \begin{equation*}
		%      \|u^pu_x-v^pv_x\|_{H^{s-2k}} \lesssim C(\|u\|_{H^s}, \|v\|_{H^s}) \|u-v\|_{H^s}.
		% \end{equation*}
\end{proof}

Up to this point, we have verified all assumptions $(A1)-(A4)$ in Theorem \ref{Kato} with $X = H^{s-1}$ and $Y = H^s$. This implies the local existence of a unique solution to \eqref{gch} and thus completes the proof of Theorem \ref{thm-local}.
%\end{proof}
\section{Global Existence}\label{global}
\subsection{Conservation Laws and Uniform Bounds}
\begin{lemma}
	Let $u$ be the local solution obtained in Theorem \ref{thm-local} in the maximal interval $(0,T_{\max})$ and assume that $b = p+1$. Then we have the following conversation law
	\begin{align*}
		I_1(t) := \int u(t)m(t)\ dx = \int u_0m_0dx \quad \text{ for all } t\in (0,T_{\max}).
	\end{align*}
\end{lemma}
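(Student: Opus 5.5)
The plan is a direct energy computation: differentiate $I_1(t)$ in time, substitute the equation \eqref{gch}, and check that every term integrates to zero precisely when $b=p+1$. First observe that $I_1(t)=\int u\,\Gamma u\,dx=\|\Lambda^k u\|_{L^2}^2\sim\|u\|_{H^k}^2$. This is well defined and continuous in time along the solution, because $u\in C([0,T_{\max});H^s)$ and $s>2(k-1)+3/2\ge k$. Since $\Gamma$ is self-adjoint, formally
\begin{equation*}
\frac{d}{dt}I_1(t)=\int u_t m\,dx+\int u m_t\,dx=2\int u\, m_t\,dx .
\end{equation*}

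Next I would insert $m_t=-u^p m_x-b\,u^{p-1}u_x m-(g(u))_x+(b+1)u^pu_x$ and treat the four resulting integrals separately.
\begin{itemize}
\item The term with $g$ is an exact derivative: $\int u\,g'(u)u_x\,dx=\int \partial_x G(u)\,dx=0$, where $G(r)=\int_0^r \tau g'(\tau)\,d\tau$.
\item Similarly $(b+1)\int u^{p+1}u_x\,dx=\frac{b+1}{p+2}\int\partial_x(u^{p+2})\,dx=0$.
\item For the transport term, integration by parts gives $-\int u^{p+1}m_x\,dx=(p+1)\int u^p u_x m\,dx$.
\end{itemize}
Combining the last identity with the remaining term $-b\int u^pu_x m\,dx$ yields
\begin{equation*}
\frac{d}{dt}I_1(t)=2(p+1-b)\int u^pu_x m\,dx,
\end{equation*}
which vanishes exactly when $b=p+1$. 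Integrating in time then gives $I_1(t)=I_1(0)=\int u_0m_0\,dx$.

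The main obstacle is justifying this computation at the regularity of Theorem \ref{thm-local}. There $m\in H^{s-2k}$ only, with $s-2k>-1/2$ possibly negative, so $m$ need not be a function and $m_t\in H^{s-2k-1}$. My first attempt is to read all integrals as $H^{-\sigma}$–$H^{\sigma}$ duality pairings. The condition $2s>2k+1$ guarantees $s-2k-1\ge -s$, so $\langle m_t,u\rangle$ makes sense with $u\in H^s$. Moreover $u^{p+1},\,u^pu_x\in H^{s-1}$ by the algebra property, and $s-1\ge 2k+1-s$. The identities $\frac{d}{dt}\langle \Gamma u,u\rangle=2\langle \Gamma u_t,u\rangle$ and $\langle m_x,u^{p+1}\rangle=-\langle m,\partial_x(u^{p+1})\rangle$ then follow by density of smooth functions, using the continuity of these pairings. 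The time differentiation is justified because $u\in C^1([0,T_{\max});H^{s-1})$, so $u_t\in H^{s-1}$ and $\Gamma u_t\in H^{s-1-2k}$, which pairs with $H^s$.

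If this duality bookkeeping becomes delicate, the fallback is a regularization argument. Fix $T'<T_{\max}$ and approximate $u_0$ in $H^s$ by smooth data $u_0^n$. The computation above is classical for the corresponding smooth solutions. By the continuous dependence in Theorem \ref{thm-local}, these solutions exist on $[0,T']$ for large $n$ and converge in $C([0,T'];H^s)$. Since $I_1$ is continuous in $H^k\supset H^s$, the conservation law passes to the limit.
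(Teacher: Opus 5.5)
Your argument is the paper's own: differentiate $I_1$, use self-adjointness of $\Gamma$ to reduce to $2\int u\,m_t\,dx$, discard the $g$- and $(b+1)$-terms as exact derivatives, and integrate the transport term by parts to reach $2(p+1-b)\int u^pu_xm\,dx$. Your factor $\frac{b+1}{p+2}$ corrects a small slip in the paper, and the duality/regularization justification you add is something the paper omits entirely; it is sound, except that for $k=1$ and $s<2$ the inequality $s-1\ge 2k+1-s$ fails, which is harmless because $u^{p+1}\in H^s$ already pairs with $m_x\in H^{s-2k-1}$ since $2s>2k+1$.
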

\begin{proof}
	Compute
	\begin{equation*}
		\frac{d}{dt} \int I_1\ dx = \int (u_tm + um_t)\ dx.
	\end{equation*}
	From the definition of $m = (I-\partial_x^2)^ku$ and integration by parts we have
	\begin{equation*}
		\int u_tm\ dx = \int u_t (I-\partial_x^2)^ku\ dx = \int (I-\partial_x^2)^ku_tu\ dx = \int m_tu\ dx.
	\end{equation*}
	Thus,
	\begin{align*}
		\frac{d}{dt}I_1 &= 2\int um_t\ dx\\
		&=  2\int u(- m_xu^p - bmu^{p-1}u_x -(g(u))_x + (b+1)u^pu_x)\ dx\\
		&= - 2\int u^{p+1}m_x\ dx - 2b\int u^pu_xm\ dx - 2\int u(g(u))_x\ dx + 2(b+1)\int u^{p+1}u_x\ dx.
	\end{align*}
	For the terms on the right hand side, we evaluate them as
	\begin{equation*}
		\int u^{p+1}m_x\ dx = - \int (u^{p+1})_xm\ dx = - \int (p+1)u^pu_xm\ dx,
	\end{equation*}
	\begin{equation*}
		-\int u(g(u))_x\ dx = \int \frac{d}{dx}G(u)\ dx = 0,
	\end{equation*}
	where $G(u)$ is an antiderivative of $g$, and
	\begin{equation*}
		(b+1)\int u^{p+1}u_x\ dx = \frac{b+1}{p+1} \int (u^{p+1})_x\ dx = 0.
	\end{equation*}
	Collecting these terms, we have
	\begin{equation*}
		\frac{d}{dt}I_1 = 2(p+1-b)\int u^pu_xm\ dx = 0
	\end{equation*}
	due to the assumption $b = p+1$.
\end{proof}
\begin{remark}
	The paper \cite{mclachlan2009well} considered $p=1$ and $b=2$, which is a special case of our results.
\end{remark}
\begin{lemma}
	Under the same assumptions as in Theorem \ref{global-wellposedness}, there exists a constant $M>0$ such that for $t\geq0$,
	\begin{equation*}
		\|u(t)\|_{H^k} \leq M.
	\end{equation*}
	Moreover, by Sobolev embedding, we have
	\begin{equation*}
		\|u(t)\|_{L^\infty} \leq CM, \quad \quad\|u_x(t)\|_{L^\infty} \leq CM,
	\end{equation*}
	where $C$ is a constant depending on $k$.
\end{lemma}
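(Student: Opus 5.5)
The bound comes directly from the conservation law $I_1(t)=I_1(0)$ of the preceding lemma. The key observation is that $I_1$ is exactly the squared $H^k$-norm of $u$. Using the self-adjointness of $(I-\partial_x^2)^k$ on $L^2$, or Plancherel,
\begin{equation*}
	I_1(t)=\int u\,(I-\partial_x^2)^k u\,dx=\int_{\R}(1+|\xi|^2)^k|\widehat{u}(t,\xi)|^2\,d\xi=\|u(t)\|_{H^k}^2 .
\end{equation*}
Hence $\|u(t)\|_{H^k}^2=\|u_0\|_{H^k}^2$ for all $t\in[0,T_{\max})$, and we may take $M:=\|u_0\|_{H^k}$. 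This is finite because $u_0\in H^s$ with $s>2(k-1)+3/2\ge k$ when $k\ge 2$. The hypothesis $m_0\in L^2$ guarantees that $\int u_0m_0\,dx$ is well defined as an $L^2$ pairing, although this already follows from $s\ge 2k$.

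To justify the integrations by parts, I first record that $u(t)\in H^s$ with $s\ge 2k$ on $[0,T_{\max})$. Then $m(t)\in H^{s-2k}\subset L^2$, so $I_1(t)=(u,m)_{L^2}$ makes sense, and the Plancherel identity holds pointwise in $t$. If needed, I would rigorously justify the time differentiation in the conservation lemma by a density argument: approximate $u_0$ by smooth data, use the continuous dependence from Theorem \ref{thm-local}, and pass to the limit in the identity $\|u(t)\|_{H^k}=\|u_0\|_{H^k}$.

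For the $L^\infty$ bounds, since $k\ge 2$ we have $H^k\hookrightarrow W^{1,\infty}$; indeed $k-1>1/2$ already suffices. This gives $\|u\|_{L^\infty}\le C\|u\|_{H^1}\le C\|u\|_{H^k}\le CM$ and $\|u_x\|_{L^\infty}\le C\|u\|_{H^2}\le C\|u\|_{H^k}\le CM$, with $C$ the Sobolev constant, which depends only on $k$.

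The only delicate point is regularity: making sure the conservation law is rigorous for $H^s$ solutions rather than smooth ones. Since $s>2k-1/2$ gives $m\in H^{s-2k}$ with $s-2k>-1/2$, the pairing is fine, but $m_t$ lies only in $H^{s-2k-1}$. I would therefore handle $\frac{d}{dt}I_1$ through the smooth approximation described above, instead of differentiating $(u,m)_{L^2}$ directly.
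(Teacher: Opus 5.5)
Your argument is correct and is essentially the paper's: both take the conservation of $I_1$ from the preceding lemma, identify $I_1$ with $\|u\|_{H^k}^2$, and then use $H^k\hookrightarrow W^{1,\infty}$ for $k\ge 2$. For the identification you use Plancherel, which gives an exact identity and $M=\|u_0\|_{H^k}$, whereas the paper expands binomially and integrates by parts, getting equivalence up to constants. One small slip: the standing hypothesis is $s>2k-\tfrac12$, not $s\ge 2k$, so $m(t)\in L^2$ is not available a priori (the paper only proves it afterwards, using this very lemma); this is harmless, since $\int u\,(I-\partial_x^2)^k u\,dx$ is still a well-defined $H^{k}$--$H^{-k}$ duality pairing to which Plancherel applies, as your final paragraph in effect recognizes.
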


\begin{proof}
	The conservation of $I_1$ implies that $I_1(t) = I_1(0)$ for all $t$. Since $m=(I-\partial_x^2)^k$, we have
	\begin{equation*}
		I_1 = \int um\ dx = \int u(I-\partial_x^2)^ku\ dx
	\end{equation*}
	By  using the binomial theorem
	we obtain
	\begin{equation*}
		I_1 = \sum_{j=0}^k (-1)^j \binom{k}{j} \int u\partial_x^{2j}u\ dx.
	\end{equation*}
	After integration by parts $j$ times, we get
	\begin{equation*}
		I_1 = \sum_{j=0}^k \binom{k}{j} \int (\partial_x^ju)^2\ dx,
	\end{equation*}
	which is equivalent to the $H^k$-norm squared, i.e, there exists constants $c, C > 0$ such that
	\begin{equation*}
		c\|u\|^2_{H^k} \leq I_1 \leq C\|u\|^2_{H^k},
	\end{equation*}
	where $\|u\|^2_{H^k} = \sum_{j=0}^{k}\int (\partial_x^ju)^2\ dx$. Thus, conservation of $I_1$ implies a uniform bound
	\begin{equation*}
		\|u(t)\|_{H^k} \leq M.
	\end{equation*}
	By the Sobolev embedding theorem, for $k\geq2$ we have $H^k(\mathbb{R}) \hookrightarrow C^1(\mathbb{R})$, which implies that there exists a constant $C_S>0$ such that for any $u \in H^k(\mathbb{R})$
	\begin{equation*}
		\|u(t)\|_{L^\infty} \leq C_S \|u(t)\|_{H^k}, \quad \|u_x(t)\|_{L^\infty} \leq C_S \|u(t)\|_{H^k}.
	\end{equation*}
	Therefore, unifrom bound on $\|u(t)\|_{H^k}$ yields uniform bounds
	\begin{equation*}
		\|u(t)\|_{L^\infty} \leq C_S M, \quad \|u_x(t)\|_{L^\infty} \leq C_S M \quad \text{for all } t \geq 0.
	\end{equation*}
	These uniform controls over $u$ and $u_x$ in $L^\infty(\R)$, which are crucial for controlling nonlinear terms in the energy estimates later.
\end{proof}
\begin{proposition}
	Let $u$ be the local solution of \eqref{gch} obtained in Theorem \ref{thm-local}. If the initial data $m_0 \in L^2(\R)$, then $m(t) \in  L^2(\R)$ for all $t\in [0,T_{\max})$. Moreover, we have the estimate
	\begin{equation*}
		\|m\|_{L^2} \leq e^{C_0t}\|m_0\|_{L^2},
	\end{equation*}
	where $C_0$ is a constant depending only on the norm of the initial value $u_0$.
\end{proposition}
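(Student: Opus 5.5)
We propose an $L^2$ energy estimate on the equation for $m$, using the uniform $L^\infty$ bounds on $u$ and $u_x$ from the previous lemma (which rest on the conservation of $I_1$ and $k\ge 2$, $b=p+1$). Multiplying \eqref{gch} by $m$ and integrating over $\R$ gives
\begin{equation*}
\frac12\frac{d}{dt}\|m\|_{L^2}^2 = -\int u^p m_x m\,dx - b\int u^{p-1}u_x m^2\,dx - \int (g(u))_x m\,dx + (b+1)\int u^pu_x m\,dx .
\end{equation*}
For the transport term we integrate by parts:
\begin{equation*}
-\int u^p m m_x\,dx = \frac p2\int u^{p-1}u_x m^2\,dx .
\end{equation*}
Hence the first two terms combine into $(\frac p2 - b)\int u^{p-1}u_x m^2\,dx$. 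This is bounded by $C\|u\|_{L^\infty}^{p-1}\|u_x\|_{L^\infty}\|m\|_{L^2}^2\le C(M)\|m\|_{L^2}^2$.

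The lower order terms are handled with Cauchy--Schwarz. We have
\begin{equation*}
\Big|\int g'(u)u_x m\,dx\Big| \le \sup_{|y|\le C_SM}|g'(y)|\,\|u_x\|_{L^2}\|m\|_{L^2} \le C(M)\,M\,\|m\|_{L^2}.
\end{equation*}
Similarly,
\begin{equation*}
(b+1)\Big|\int u^pu_xm\,dx\Big|\le C(M)\,\|m\|_{L^2}.
\end{equation*}
These are linear in $\|m\|_{L^2}$, so they do not give the pure exponential form directly. To restore it we note that $\|u_x\|_{L^2}\le\|u\|_{H^k}\le \|\Gamma u\|_{L^2}=\|m\|_{L^2}$; since $\Gamma$ is a Fourier multiplier with symbol $(1+\xi^2)^k\ge1$, this holds with constant $1$. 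Each term is therefore bounded by $C(M)\|m\|_{L^2}^2$. Summing up,
\begin{equation*}
\frac{d}{dt}\|m\|_{L^2}^2\le 2C_0\|m\|_{L^2}^2, \qquad C_0=C_0(M),
\end{equation*}
where $M$ depends only on $\|u_0\|_{H^k}$ through $I_1(0)$. Gronwall's lemma then gives $\|m(t)\|_{L^2}\le e^{C_0t}\|m_0\|_{L^2}$.

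The main obstacle is justification, not the algebra. For $s>2(k-1)+3/2$ we only have $m\in H^{s-2k}$, and $s-2k$ may be negative, so neither $m\in L^2$ nor the integration by parts $\int u^pm_xm$ is a priori meaningful. We will first carry out the computation for smooth data, say $u_0\in H^{\sigma}$ with $\sigma\ge 2k+1$. In that case $m\in C([0,T);H^1)$ and $m_t\in C([0,T);L^2)$ by Theorem \ref{thm-local} applied at the higher regularity, so all the integrals make sense.

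We then pass to general data with $m_0\in L^2$, i.e. $u_0\in H^{2k}$. We approximate $u_0$ by mollified data $u_0^\varepsilon$ in $H^{2k}$. The continuous dependence in Theorem \ref{thm-local} gives convergence of the solutions in $C([0,T'];H^s)$ on compact subintervals. The bound $\|m^\varepsilon(t)\|_{L^2}\le e^{C_0t}\|m_0^\varepsilon\|_{L^2}$ holds with $C_0$ uniform in $\varepsilon$, because $\|u_0^\varepsilon\|_{H^k}$ is uniformly bounded. Weak lower semicontinuity of the $L^2$ norm then yields $m(t)\in L^2$ together with the stated estimate.

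One must also check that the smooth solutions exist on the same interval. We will argue this via the standard persistence-of-regularity fact for Kato solutions, or alternatively by noting that the $L^2$ bound on $m$ itself precludes blow-up of the $H^{2k}$ norm.
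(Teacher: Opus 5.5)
Your proposal is correct and follows the paper's own argument: the $L^2$ energy estimate for $m$, integration by parts on the transport term, the uniform bounds on $\|u\|_{L^\infty}$ and $\|u_x\|_{L^\infty}$ coming from conservation of $I_1$ (which needs $b=p+1$ and $k\ge 2$), and then Gronwall. Your version improves on the paper in two respects. First, absorbing the linear terms via $\|u_x\|_{L^2}\le\|m\|_{L^2}$ yields a homogeneous differential inequality and hence exactly $\|m(t)\|_{L^2}\le e^{C_0t}\|m_0\|_{L^2}$; the paper's Young-plus-affine-Gronwall route only gives $\|m(t)\|_{L^2}^2\le(\|m_0\|_{L^2}^2+L/K)e^{Kt}-L/K$, which is weaker than the stated bound. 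Second, your smooth-data approximation with weak lower semicontinuity justifies $m(t)\in L^2$, whereas the paper only carries out the computation formally.
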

\begin{proof}
	% With $b = p+1$, the quantity $I_1(t) = \int u m  dx$ is conserved, so
	%     \begin{equation*}
		%         I_1(t) = I_1(0) \quad \text{for all } t \geq 0.
		%     \end{equation*}
	% Since $m = (I - \partial_x^2)^k u$, we have
	%     \begin{equation*}
		%         I_1 = \int u m\ dx = \int u (I - \partial_x^2)^k u\ dx.
		%     \end{equation*}
	% This is equivalent to the $H^k$-norm squared of $u$, so there exists constants $c,C >0$ such that
	%     \begin{equation*}
		%         c\|u\|_{H^k}^2 \leq I_1 \leq C\|u\|_{H^k}^2.
		%     \end{equation*}
	% Thus,
	%     \begin{equation*}
		%         \|u(t)\|_{H^k} \leq M, \quad \text{where } M = \sqrt{\frac{I_1(0)}{c}}.
		%     \end{equation*}
	% Since $k \geq 2$, Sobolev embedding gives
	%     \begin{equation*}
		%          \|u(t)\|_{L^\infty} \leq C_S\|u\|_{H^k} \leq C_SM, \quad \|u_x(t)\|_{L^\infty} \leq C_S\|u\|_{H^k} \leq C_SM,
		%     \end{equation*}
	% where $C_S$ is the Sobolev constant. Set $C_M=C_SM$, so that
	%     \begin{equation*}
		%         \|u(t)\|_{L^\infty} \leq C_M, \quad \|u_x(t)\|_{L^\infty} \leq C_M.
		%     \end{equation*}    
	%(\textcolor{red}{\textbf{UP TO HERE!}}) 
	Multiply equation (\ref{gch}) by $m$ and integrate to get, noting that $b = p+1$,
	\begin{equation*}
		\int m m_t  dx + \int m m_x u^p  dx + (p+1) \int m^2 u^{p-1} u_x  dx = - \int m (g(u))_x  dx + (p+2) \int m u^p u_x  dx.
	\end{equation*}
	By using integration by parts
	\begin{align*}
		\int m m_x u^p  dx &= -\frac{p}{2} \int u^{p-1} u_x m^2  dx,
	\end{align*}
	and it leads to
	\begin{equation}
		\frac{1}{2} \frac{d}{dt} \|m\|_{L^2}^2 + (\frac{p}{2} + 1) \int u^{p-1} u_x m^2  dx = - \int m (g(u))_x\ dx + (p+2) \int m u^p u_x\ dx.
		\label{eqn_estimate}
	\end{equation}
	For the term involving $g(u)$, it holds
	\begin{equation*}
		\left|- \int m (g(u))_x  dx\right| = \left|\int m g'(u) u_x dx\right| \leq \|m\|_{L^2} \|g'(u) u_x\|_{L^2}.
	\end{equation*}
	Since $\|u\|_{L^\infty} \leq C_M$, we can define $G_M = \sup_{|z| \leq C_M} |g'(z)| < +\infty$. Also, $\|u_x\|_{L^2} \leq \|u\|_{H^1} \leq \|u\|_{H^k} \leq M$. Hence,
	\begin{equation*}
		\|g'(u) u_x\|_{L^2} \leq G_MM,
	\end{equation*}
	and so,
	\begin{equation*}
		\left|- \int m (g(u))_x  dx\right| \leq G_MM \|m\|_{L^2}.
	\end{equation*}
	Using the bounds of $\|u\|_{L^\infty}$ and $\|u_x\|_{L^\infty}$, we can also estimate
	\begin{equation*}
		\left|\left(\frac{p}{2} + 1\Big) \int u^{p-1} u_x m^2  dx \right| \leq \left(\frac{p}{2} + 1\right) \|u^{p-1}\|_{L^\infty}\|u_x\|_{L^\infty} \|m\|_{L^2}^2 \leq \Big(\frac{p}{2} + 1 \right) C_M^p \|m\|_{L^2}^2,
	\end{equation*}
	\begin{equation*}
		\left|(p+2) \int m u^p u_x\ dx\right| \leq (p+2) \|u^p\|_{L^\infty} \|u_x\|_{L^\infty} \|m\|_{L^2} \leq (p+2) C_M^{p+1} \|m\|_{L^2}.
	\end{equation*}
	Substituting into \eqref{eqn_estimate} yields
	\begin{equation*}
		\frac{1}{2} \frac{d}{dt} \|m\|_{L^2}^2 \leq \Big(\frac{p}{2} + 1\Big) C_M^{p} \|m\|_{L^2}^2 + G_MM\|m\|_{L^2} + (p+2) C_M^{p+1} \|m\|_{L^2}.
	\end{equation*}
	% Multiplying by 2 gives
	%     \begin{equation*}
		%         \frac{d}{dt} \|m\|_{L^2}^2 \leq (p+2) C_M^{p} \|m\|_{L^2}^2 + 2(p+2) C_M^{p+1} \|m\|_{L^2} + 2C G_M\|m\|_{L^2}.
		%     \end{equation*}
	% Let
	or
	\begin{equation*}
		\frac{d}{dt} \|m\|_{L^2}^2 \leq K_1 \|m\|_{L^2}^2 + K_2 \|m\|_{L^2} \le K \|m\|_{L^2}^2 + L
	\end{equation*}
	with
	\begin{align*}
		K_1 = (p+2) C_M^p, \quad 
		K_2 = 2(p+2) C_M^{p+1} + 2CG_M, \quad K = K_1 + 1, \quad L = \frac{K_2^2}{4}.
	\end{align*}
	% Use Young's inequality to get
	%     \begin{equation*}
		%         K_2 \|m\|_{L^2} \leq \frac{K_2^2}{4} + \|m\|_{L^2}^2.
		%     \end{equation*}
	% So, we have
	%     \begin{equation*}
		%         \frac{d}{dt} \|m\|_{L^2}^2 \leq (K_1 + 1) \|m\|_{L^2}^2 + \frac{K_2^2}{4}.
		%     \end{equation*}
	% Let $K = K_1 + 1$ and $L = \frac{K_2^2}{4}$. Then,
	% \begin{equation*}
		%     \frac{d}{dt} \|m\|_{L^2}^2 \leq K \|m\|_{L^2}^2 + L.
		% \end{equation*}
	By Gronwall's inequality, we get
	\begin{equation*}
		\|m(t)\|_{L^2}^2 \leq \Big( \|m_0\|_{L^2}^2 + \frac{L}{K} \Big) e^{K t} - \frac{L}{K}
	\end{equation*}
	which finishes the proof of the lemma with $C_0 = K/2$. 
\end{proof}
\begin{corollary}
	Assume $b=p+1$, $m_0 \in L^2(\R)$, and $u_0 \in H^s(\R)$ for $s>2k-\frac{1}{2}$. Then, we have the following estimate
	\begin{equation*}
		\|u(t)\|_{H^{2k}} \leq C(t),\quad \text{for all}\ t \in (0,T_{max})
	\end{equation*}
	where $C(t)$ depends continuously in $t\in (0,\infty)$.
\end{corollary}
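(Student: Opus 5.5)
The corollary should follow directly from the preceding Proposition, because $m=(I-\partial_x^2)^k u=\Gamma u$ is, up to an isomorphism, the same object as $u$ in $H^{2k}$. On the Fourier side,
\begin{equation*}
\|u\|_{H^{2k}}^2=\int (1+|\xi|^2)^{2k}|\widehat u(\xi)|^2\,d\xi=\int |\widehat{m}(\xi)|^2\,d\xi=\|m\|_{L^2}^2 .
\end{equation*}
With the Fourier-based norm from the Notation section this is an exact identity; with the equivalent norm $\|\Lambda^{2k}u\|_{L^2}$ it holds up to harmless constants. So $\|u(t)\|_{H^{2k}}\sim\|m(t)\|_{L^2}$ for every $t\in(0,T_{\max})$.

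\textbf{Step 1 (regularity needed for the Proposition).} The local solution from Theorem \ref{thm-local} lies in $C([0,T_{\max});H^s)$ with $s>2k-\tfrac12$. For the multiplier–integration-by-parts argument in the Proposition to be justified, I need $m\in L^2$ along the flow and $m_t$ meaningful. Since $s<2k$ is allowed, $m$ need not a priori be in $L^2$, even though $m_0\in L^2$ by assumption.

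I would handle this by regularization. Take smooth data $u_0^\varepsilon\to u_0$ in $H^s$ with $\Gamma u_0^\varepsilon\to m_0$ in $L^2$; for instance, mollify and use $u_0\in H^{\max(s,2k)}$, which is exactly the content of $m_0\in L^2$. For these data the solutions are smooth enough for the energy identity \eqref{eqn_estimate}. The Proposition then gives
\begin{equation*}
\|m^\varepsilon(t)\|_{L^2}\le e^{C_0t}\|m_0^\varepsilon\|_{L^2}.
\end{equation*}
The constant $C_0$ depends only on the uniform $H^k$ bound $M$, which comes from the conserved $I_1$. Hence $C_0$ is uniform in $\varepsilon$, because $I_1(u_0^\varepsilon)\to I_1(u_0)$. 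Continuous dependence from Theorem \ref{thm-local} gives $u^\varepsilon(t)\to u(t)$ in $H^s$ on compact subintervals of $[0,T_{\max})$. Weak lower semicontinuity of the $H^{2k}$ norm then passes the bound to the limit.

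\textbf{Step 2 (conclusion).} Combining the identity above with the Proposition gives
\begin{equation*}
\|u(t)\|_{H^{2k}}\le c\,\|m(t)\|_{L^2}\le c\,e^{C_0t}\|m_0\|_{L^2}=:C(t),
\end{equation*}
and $C(t)$ is clearly continuous on $(0,\infty)$. Here $C_0$ depends only on $p$, $M$ and $\sup_{|z|\le C_SM}|g'(z)|$, not on $T_{\max}$.

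The main obstacle is the justification in Step 1: ensuring that the $L^2$ estimate on $m$ is legitimate for the actual $H^s$ solution. The quantities entering $C_0$ are already uniform by the $H^k$ bound lemma, so the only real work is the approximation argument and the uniformity of $C_0$ in $\varepsilon$. If the paper regards the Proposition as already valid for the local solution, the corollary reduces to the one-line norm equivalence.
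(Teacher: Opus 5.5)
Your proof is correct, but its key step differs from the paper's.

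The paper's route is as follows:
\begin{itemize}
\item It writes $\partial_x^{2k}u=(-1)^k\bigl[m-\sum_{j=0}^{k-1}(-1)^j\binom{k}{j}\partial_x^{2j}u\bigr]$.
\item It bounds $\|\partial_x^{2j}u\|_{L^2}$ by the conserved $H^k$ bound $M$ when $2j\le k$, and by Gagliardo--Nirenberg interpolation between $H^k$ and $\dot H^{2k}$ when $2j>k$.
\item It absorbs the resulting sublinear powers of $N=\|\partial_x^{2k}u\|_{L^2}$ with Young's inequality, ending with $\|u\|_{H^{2k}}\lesssim M+\|m\|_{L^2}+C(M)$.
\end{itemize}
You notice instead that $\Gamma=\Lambda^{2k}$, so $\|u\|_{H^{2k}}=\|\Lambda^{2k}u\|_{L^2}=\|m\|_{L^2}$ exactly. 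With the $\Lambda$-norm there are not even the ``harmless constants'' you mention. This makes the interpolation and absorption unnecessary, and it uses $M$ only through $C_0$.

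What the paper's elliptic-regularity route buys is robustness. It only sees the leading part $(-1)^k\partial_x^{2k}$ of $\Gamma$ together with an a priori $H^k$ bound. It would therefore survive changing the lower-order terms of the momentum operator, where no isometry is available. For the operator actually treated here, your identity is the cleaner argument.

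Your Step 1 is extra rigor that the paper does not supply. The paper applies the Proposition to the $H^s$ solution without noting that $m(t)\in L^2$ is not automatic when $2k-\tfrac12<s<2k$. The point you gloss over is that the mollified solutions keep enough regularity for the $L^2$ energy identity for $m$ on all of $[0,T]$. This is a standard persistence argument, and it is easy here: the higher-order energy estimate, using the same splitting $\partial_x(u^p)\partial_x^{2k}u=\partial_x[\partial_x(u^p)\partial_x^{2k-1}u]-\partial_{xx}(u^p)\partial_x^{2k-1}u$ as in the paper's global proof, only involves $\|u\|_{W^{2k-1,\infty}}\lesssim\|u\|_{H^s}$.

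Finally, the Gronwall step in the Proposition actually gives $\|m(t)\|_{L^2}^2\le(\|m_0\|_{L^2}^2+L/K)e^{Kt}-L/K$, not $e^{2C_0t}\|m_0\|_{L^2}^2$. Your $C(t)$, like the paper's, should be taken from that bound; this does not affect continuity in $t$.
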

\begin{proof}
	Let $N=\|\partial_x^{2k}u(t)\|_{L^2}$. Using $m=(I-\partial_x^2)^ku$, we write
	\begin{equation*}
		\partial_x^{2k}u = (-1)^k\left[m-\sum_{j=0}^{k-1}(-1)^j\binom{k}{j}\partial_x^{2j}u \right]
	\end{equation*}
	Taking the $L^2$-norm gives
	\begin{equation}
		N \leq \|m\|_{L^2} + \sum_{j=0}^{k-1}\binom{k}{j}\|\partial_x^{2j}u\|_{L^2}.
		\label{eqn_L^2norm}
	\end{equation}
	We estimate the terms $\|\partial_x^{2j}u\|_{L^2}$ for $j=0, ..., k-1$.
	\begin{itemize}
		\item If $2j \leq k$, then by the uniform $H^k$ bound,
		\begin{equation*}
			\|\partial_x^{2j}u\|_{L^2} \leq \|u\|_{H^k} \leq M.
		\end{equation*}
		\item If $2j > k$, we use the Gagliardo–Nirenberg interpolation inequality
		\begin{equation*}
			\|\partial_x^{2j}u\|_{L^2} \leq C\|\partial_x^{2k}u\|^{\alpha_j}_{L^2}\|u\|^{1-\alpha_j}_{H^k},
		\end{equation*}
		where $\alpha_j = \frac{2j-k}{k} \in (0,1)$. Since $\|u\|_{H^k} \leq M$, we obtain
		\begin{equation*}
			\|\partial_x^{2j}u\|_{L^2} \leq CM^{1-\alpha_j}N^{\alpha_j}.
		\end{equation*}
		Let's define the index sets:
		\begin{equation*}
			J_1={j: 0 \leq j \leq k-1, 2j \leq k}, \quad J_2={j: 0 \leq j \leq k-1, 2j > k}.
		\end{equation*}
		Then, from \eqref{eqn_L^2norm},
		\begin{equation*}
			N \leq \|m\|_{L^2} + \sum_{j\in J_1}\binom{k}{j}M + \sum_{j \in J_2}\binom{k}{j}CM^{1-\alpha_j}N^{\alpha_j}.
			\label{eqn_estimate_N}
		\end{equation*}
		Let
		\begin{equation*}
			C_1 = \sum_{j\in J_1}\binom{k}{j}M, \quad C_2=\sum_{j \in J_2}\binom{k}{j}CM^{1-\alpha_j}
		\end{equation*}
		Then, \eqref{eqn_estimate_N} becomes
		\begin{equation*}
			N \leq \|m\|_{L^2} + C_1 + C_2\sum_{j \in J_2}N^{\alpha_j}.
		\end{equation*}
		For each $j \in J_2$, apply Young's inequality with $\epsilon_j >0$
		\begin{equation*}
			N^{\alpha_j} \leq \epsilon_jN + C(\epsilon_j).
		\end{equation*}
		Select $\epsilon_j$ so that $C_2\sum_{j \in J_2}\epsilon_j < \frac{1}{2}$. Then,
		\begin{equation*}
			N \leq \|m\|_{L^2} + C_1 + C_2\left(\left(\sum_{j \in J_2}\epsilon_j\right)N + \sum_{j \in J_2}C(\epsilon_j)\right)
		\end{equation*}
		Rearranging as
		\begin{equation*}
			\left(1-C_2\sum_{j \in J_2}\epsilon_j\right)N \leq \|m\|_{L^2} + C_1 + C_2\sum_{j \in J_2}C(\epsilon_j).
		\end{equation*}
		Since $1-C_2\sum_{j \in J_2}\epsilon_j > \frac{1}{2}$, we obtain
		\begin{equation*}
			N \leq 2\left(\|m\|_{L^2} + C_1 + C_2\sum_{j \in J_2}C(\epsilon_j)\right).
		\end{equation*}
		Finally, using $\|m\|_{L^2} \leq e^{C_0t}\|m_0\|_{L^2}$, we conclude
		\begin{equation*}
			\|u(t)\|_{H^{2k}} \leq C\left(\|u(t)\|_{H^{k}} + N\right) \leq C\left(M + e^{C_0t}\|m_0\|_{L^2} + C_1 + C_2\sum_{j \in J_2}C(\epsilon_j)\right) = C(t),
		\end{equation*}
		which completes the proof.
	\end{itemize}
\end{proof}
\subsection{Proof of Theorem \ref{global-wellposedness}}
\begin{proof}
	We use energy estimates to show that the $H^s$-norm stays finite as $t\to T_{\max}$, implying the global existence of the local solution obtained in Theorem \ref{thm-local}.
	% Differentiate $\|u\|_{H^s}^2 = \int (I-\partial_x^2)^{s/2}u \cdot (I-\partial_x^2)^{s/2}u\ dx$ with respect to $t$:
	% \begin{equation*}
		%     \frac{d}{dt}\|u\|_{H^s}^2 = 2\big((I-\partial_x^2)^{s/2}u, (I-\partial_x^2)^{s/2}u_t\big)_{L^2}.
		% \end{equation*}
	% Let $\Lambda = (I-\partial_x^2)^{1/2}$, which implies that $\Lambda^s=(I-\partial_x^2)^{s/2}$. Then,
	% \begin{equation*}
		%     \frac{d}{dt}\|u(t)\|_{H^s}^2 = 2(\Lambda^su, \Lambda^su_t)_{L^2}.
		% \end{equation*}
	Recall the operator $\Lambda = (I - \partial_x^2)^{1/2}$. We use the equivalent $H^s$-norm $\|u\|_{H^s}^2 = \|\Lambda^s u\|_{L^2}^2 = (\Lambda^s u, \Lambda^s u)_{L^2}$ and  the quasilinear form  $u_t + A(u)u = f(u)$ to get
	\begin{equation*}
		\frac{d}{dt}\|u(t)\|_{H^s}^2 = -2\underbrace{(\Lambda^su, \Lambda^s(A(u)u))_{L^2}}_{(*)} + 2\underbrace{(\Lambda^su, \Lambda^sf(u))_{L^2}}_{(**)}.
	\end{equation*}
	Write $\Lambda^s(A(u)u)$ as
	\begin{equation*}
		\Lambda^s(A(u)u) = \Lambda^s(u^pu_x) = u^p\Lambda^su_x + [\Lambda^s, u^p]u_x,
	\end{equation*}
	and thus
	\begin{equation*}
		(*) = (\Lambda^su, \Lambda^s(A(u)u))_{L^2} = \underbrace{(\Lambda^su, u^p\Lambda^su_x)_{L^2}}_{(*_1)} + \underbrace{(\Lambda^su, [\Lambda^s, u^p]u_x)_{L^2}}_{(*_2)}.
	\end{equation*}
	For $(*_1)$, since the operators $\Lambda^s$ and $\partial_x$ commute, we have
	\begin{equation*}
		(\Lambda^su, u^p\Lambda^su_x)_{L^2} = \int \Lambda^suu^p\partial_x(\Lambda^su)\ dx = \frac{1}{2}\int u^p\partial_x(\Lambda^su)^2\ dx = -\frac{1}{2}\int \partial_x(u^p)(\Lambda^su)^2\ dx.
	\end{equation*}
	Substituting $\partial_x(u^p)=pu^{p-1}u_x$ yields
	\begin{equation*}
		|(\Lambda^su, u^p\Lambda^su_x)_{L^2}| \leq \frac{p}{2}\|u^{p-1}u_x\|_{L^\infty}\|\Lambda^su\|^2_{L^2} \leq C\|u\|_{H^s}^2
	\end{equation*}
	thanks to the boundedness of $\|u\|_{L^\infty}$ and $\|u_x\|_{L^\infty}$. For $(*_2)$, applying Lemma \ref{commutator_estimate} gives
	\begin{align*}
		\|[\Lambda^s, u^p]u_x\|_{L^2} &\lesssim C\big(\|\partial_x(u^p)\|_{L^\infty}\|\Lambda^{s-1}u_x\|_{L^2} + \|\Lambda^s(u^p)\|_{L^2}\|u_x\|_{L^\infty}\big)\\
		&\lesssim \|u\|_{L^\infty}^{p-1}\|u_x\|_{L^\infty}\|u\|_{H^s} + \|u^p\|_{H^s}\|u_x\|_{L^\infty}\\
		&\lesssim C(\|u\|_{L^\infty}, \|u_x\|_{L^\infty})\|u\|_{H^s}
	\end{align*}
	where we used $\|\Lambda^s(u^p)\|_{L^2} \leq C(\|u\|_{L^\infty})\|u\|_{H^s}$ thanks to Lemma \ref{est1}, and the bounds of $\|u\|_{L^\infty}$ and $\|u_x\|_{L^\infty}$ at the last step. Therefore,
	\begin{equation*}
		|(*_2)| = |(\Lambda^su, [\Lambda^s, u^p]u_x)_{L^2}| \leq \|\Lambda^su\|_{L^2}\|[\Lambda^s, u^p]u_x\|_{L^2} \leq \|u\|_{H^s}\|u\|_{H^s} = C\|u\|_{H^s}^2.
	\end{equation*}
	Thus we get the desired estimate $|(*)| \leq C\|u\|_{H^s}^2$ for $(*)$. Now we turn to the second term $(**)$, which fulfills
	\begin{equation*}
		|(**)| = |(\Lambda^su, \Lambda^sf(u))| \leq \|u\|_{H^s}\|\Lambda^sf(u)\|_{L^2}.
	\end{equation*}
	It remains to show
	\begin{equation}\label{ee4}
		\|\Lambda^sf(u)\|_{L^2} \leq C(t)\|u\|_{H^s}.
	\end{equation}
	Recall the formulation of $f(u)$ in \eqref{f}, we have
	\begin{align*}
		\|\Lambda^sf(u)\|_{L^2} \lesssim \underbrace{\|\Lambda^{s-2k}([\Gamma, u^p]u_x)\|_{L^2}}_{(**_1)} +b \underbrace{\|\Lambda^{s-2k}(u^{p-1}u_x\Gamma u)\|_{L^2}}_{(**_2)}\\
		+ \underbrace{\|\Lambda^{s-2k}((g(u))_x)\|_{L^2}}_{(**_3)} + (b+1)\underbrace{\|\Lambda^{s-2k}(u^pu_x)\|_{L^2}}_{(**_4)}.
	\end{align*}
	We estimate each term separately:
	\begin{itemize}
		\item[i)] \textbf{Estimate of $(**_1)$}: We have
		\begin{align*}
			\|\Lambda^{s-2k}[\Gamma, u^p]u_x\|_{L^2} &= \|\Lambda^{s-2k}\big(\Gamma(u^pu_x) - u^p\Gamma(u_x)\big)\|_{L^2},\\
			&=\left\|\Lambda^{s-2k}\left(\sum_{j=0}^{k}\binom{k}{j}(-1)^j\partial_x^{2j}(u^pu_x) - \sum_{j=0}^{k}\binom{k}{j}(-1)^ju^p\partial_x^{2j+1}u\right)\right\|_{L^2},\\
			&\lesssim\ \sum_{j=0}^k\left\|\Lambda^{s-2k}\big(\partial_x^{2j}(u^pu_x) - u^p\partial_x^{2j+1}u\big)\right\|_{L^2}\\
			&\lesssim \sum_{j=1}^k\left\|\partial_x^{2j}(u^pu_x) - u^p\partial_x^{2j+1}u\right\|_{H^{s-2k}},
		\end{align*}
		where we used the binomial formula for the $\Gamma$ and the fact that when $j = 0$, $\partial_x^{2j}(u^pu_x) = u^p\partial_x^{2j+1}u$. Let's divide into cases:
		\begin{itemize}
			\item[(a)] For the case $1 \leq j \leq k-1$, the terms are lower order and can be controlled using Sobolev embeddings and the fact that $u\in H^s$ with $s>2k-\frac{1}{2}$. Each term involves at most $2j+1 \leq 2k-1$ derivatives, which is less than $s$, so the estimates are straightforward using product rules and the boundedness of $\|u\|_{H^{2k}}$ from the second conserved quantity. More precisely,
			\begin{align}\label{ee3}
				\left\|\partial_x^{2j}(u^pu_x) - u^p\partial_x^{2j+1}u\right\|_{H^{s-2k}} \lesssim \|u^pu_x\|_{H^{s-2k+2j}} + \|u^p\partial_x^{2j+1}u\|_{H^{s-2k}}.
			\end{align}
			If $s - 2k > 1/2$, then we estimate \eqref{ee3} further as
			\begin{align*}
				&\|u^pu_x\|_{H^{s-2k+2j}} + \|u^p\partial_x^{2j+1}u\|_{H^{s-2k}}\\
				&\lesssim \|u^{p+1}\|_{H^{s-2k+2j+1}} + \|u^p\|_{L^\infty}\|\partial_x^{2j+1}u\|_{H^{s-2k}} + \|u^p\|_{H^{s-2k}}\|\partial_x^{2j+1}u\|_{L^\infty}\\
				&\lesssim C(\|u\|_{L^\infty})\|u\|_{H^{s-2k+2j+1}} + \|u\|_{H^{s-2k+2j+1}} + \|u\|_{W^{2j+1,\infty}}C(\|u\|_{L^\infty})\|u\|_{H^{s-2k}}\\
				&\lesssim C(t)\|u\|_{H^s}
			\end{align*}
			where we used $\|u\|_{W^{2j+1,\infty}} \lesssim \|u\|_{H^{2j+2}} \lesssim \|u\|_{H^{2k}} \le C(t)$. If $1/2 \ge s - 2k$, we choose $\alpha = s - 2k$ and $\beta = 1/2 + \delta$ for a sufficiently small $\delta>0$. It can be easily checked that $\beta \ge \alpha > -\beta$, and thus we can apply the fractional Leibniz rule in Lemma \ref{leibniz} with $f = u^p$ and $g = \partial_x^{2j+1}u$ to estimate
			\begin{align*}
				\|u^p \partial_x^{2j+1}u\|_{H^{s-2k}} &\lesssim \|u^p\|_{H^{1/2+\delta}}\|\partial_x^{2j+1}u\|_{H^{s-2k}}\\
				&\lesssim C(\|u\|_{L^\infty})\|u\|_{H^{1/2+\delta}}\|u\|_{H^{s-2k+2j+1}}\\
				&\lesssim C(t)\|u\|_{H^s}.
			\end{align*}
			From both two cases, we conclude the estimate for \eqref{ee3}
			\begin{equation*}
				\left\|\partial_x^{2j}(u^pu_x) - u^p\partial_x^{2j+1}u\right\|_{H^{s-2k}} \lesssim C(t)\|u\|_{H^s}.
			\end{equation*}
			\item[(b)] For the case $j = k$, we need to estimate 
			\begin{align*}
				T_k &:= \partial_x^{2k}(u^pu_x)-u^p\partial_x^{2k+1}u.
			\end{align*}
			By writing $\partial_x^{2k}(u^pu_x) = \partial_x(\partial_x^{2k-1}(u^pu_x))$ and
			\begin{equation*}
				u^p\partial_{x}^{2k+1}u = \partial_x\big[u^p \partial_{x}^{2k}u - \partial_x^{2k-1}u\partial_x(u^p)\big] + \partial_x^{2k-1}u\partial_{xx}(u^p)
			\end{equation*}
			we have
			\begin{align*}
				\|T_k\|_{H^{s-2k}} &\lesssim \|\partial_x^{2k-1}(u^pu_x) - u^p \partial_{x}^{2k}u + \partial_x^{2k-1}u\partial_x(u^p)\|_{H^{s-2k+1}}\\
				&\qquad + \|\partial_x^{2k-1}u\partial_{xx}(u^p)\|_{H^{s-2k}} \lesssim (C) + (D). 
			\end{align*}
			For the part $(D)$, we use Lemma \ref{est1}
			\begin{align*}
				(D) &\lesssim \|\partial_x^{2k-1}u\|_{L^\infty}\|\partial_{xx}(u^p)\|_{H^{s-2k}} + \|\partial_x^{2k-1}u\|_{H^{s-2k}}\|\partial_{xx}(u^p)\|_{L^\infty}\\
				&\lesssim \|u\|_{H^{2k}}\|u^p\|_{H^{s-2k+2}} + \|u\|_{H^{s-1}}\|u^p\|_{H^3}\\
				&\lesssim C(t)\|u\|_{H^s}
			\end{align*}
			since $\|u\|_{H^3} \lesssim \|u\|_{H^{2k}} \le C(t)$. For the term $(C)$ we use product rule and Lemma \ref{est1} again to estimate
			\begin{align*}
				(C) &\lesssim \left\|\sum_{j=0}^{2k-1}\binom{2k-1}{j}\partial_x^{2k-1-j}(u^p)\partial_x^{j+1}u - u^p\partial_x^{2k}u + \partial_x^{2k-1}u\partial_x(u^p)\right\|_{H^{s-2k+1}}\\
				&\lesssim \sum_{j=0}^{2k-2}\|\partial_x^{2k-1-j}(u^p)\partial_x^{j+1}u\|_{H^{s-2k+1}}\\
				&\lesssim \sum_{j=0}^{2k-2}\left(\|\partial_x^{2k-1-j}(u^p)\|_{L^\infty}\|\partial_x^{j+1}u\|_{H^{s-2k+1}} + \|\partial_x^{2k-1-j}(u^p)\|_{H^{s-2k+1}}\|\partial_x^{j+1}u\|_{L^\infty}\right) \\
				&\lesssim \sum_{j=0}^{2k-2}\left(\|u^p\|_{H^{2k-j}}\|u\|_{H^{s-2k+j+2}} + \|u^p\|_{H^{s-j}}\|u\|_{H^{j+2}} \right)\\
				&\lesssim C(\|u\|_{L^\infty})\|u\|_{H^{2k}}\|u\|_{H^s} \lesssim C(t)\|u\|_{H^s}.
			\end{align*}
			Therefore, we have
			\begin{equation*}
				(**_1) \lesssim C(t)\|u\|_{H^s}.
			\end{equation*}
		\end{itemize}
		\item[ii)] \textbf{Estimate of $(**_2)$}:     We expand $\Gamma$ to have
		\begin{align*}
			\|\Lambda^{s-2k}(u^{p-1}u_x\Gamma u)\|_{L^2} &\lesssim \|u^{p-1}u_x\Gamma u\|_{H^{s-2k}},\\
			&\lesssim \Big\|\partial_x(u^p)\sum_{j=0}^{k}\binom{k}{j}(-1)^{j}\partial_x^{2j}u\Big\|_{H^{s-2k}},\\
			&\lesssim \sum_{j=0}^{k-1}\|\partial_x(u^p)\partial_x^{2j}u\|_{H^{s-2k}} + \|\partial_x(u^p)\partial_x^{2k}u\|_{H^{s-2k}},\\
			&= (E) + (F).
		\end{align*}
		Applying the fractional product estimate in Lemma \ref{est1} to $(E)$ yields
		\begin{align*}
			(E) &= \sum_{j=0}^{k-1}\|\partial_x(u^p)\partial_x^{2j}u\|_{H^{s-2k}}\\
			&\lesssim \sum_{j=0}^{k-1}\left(\|\partial_x(u^p)\|_{L^\infty}\|\partial_x^{2j}u\|_{H^{s-2k}} + \|\partial_x(u^p)\|_{H^{s-2k}}\|\partial_x^{2j}u\|_{L^\infty}\right)\\
			&\lesssim \sum_{j=0}^{k-1}\left(\|u^p\|_{H^2}\|u\|_{H^{s-2k+2j} + \|u^p\|_{H^{s-2k+1}}\|u\|_{H^{2j+1}}} \right)\\
			&\lesssim C(\|u\|_{L^\infty})\|u\|_{H^{2k}}\|u\|_{H^s} \lesssim C(t)\|u\|_{H^s}.
		\end{align*}
		It remains to estimate $(F)$, where we exploit
		\begin{equation*}
			\partial_x(u^p)\partial_x^{2k}u = \partial_x[\partial_x(u^p)\partial_x^{2k-1}u] - \partial_{xx}(u^p)\partial_x^{2k-1}u
		\end{equation*}
		and Lemma \ref{est1} to get
		\begin{align*}
			(F) &\lesssim \|\partial_x(u^p)\partial_x^{2k-1}u\|_{H^{s-2k+1}} + \|\partial_{xx}(u^p)\partial_x^{2k-1}u\|_{H^{s-2k}}\\
			&\lesssim \|\partial_x(u^p)\|_{L^\infty}\|\partial_x^{2k-1}u\|_{H^{s-2k+1}}+\|\partial_x(u^p)\|_{H^{s-2k+1}}\|\partial_x^{2k-1}u\|_{L^\infty}\\
			&\quad + \|\partial_{xx}(u^p)\|_{L^\infty}\|\partial_x^{2k-1}u\|_{H^{s-2k}} + \|\partial_{xx}(u^p)\|_{H^{s-2k}}\|\partial_x^{2k-1}u\|_{L^\infty}\\
			&\lesssim \|u^p\|_{H^2}\|u\|_{H^{s}} + \|u^p\|_{H^{s-2k+2}}\|u\|_{H^{2k}} + \|u^p\|_{H^{3}}\|u\|_{H^{s-1}} + \|u\|_{H^{s-2k+2}}\|u\|_{H^{2k}}\\
			&\lesssim C(\|u\|_{L^\infty})\|u\|_{H^{2k}}\|u\|_{H^s} \lesssim C(t)\|u\|_{H^s}.
		\end{align*}
		Combining the estimates of $(E)$ and $(F)$, we obtain
		\begin{equation*}
			(**_2) \le C(t)\|u\|_{H^s}.
		\end{equation*}
		\item[iii)] \textbf{Estimates of $(**_3)$ and $(**_4)$}. These are straightforward thanks to
		\begin{equation*}
			\|\Lambda^{s-2k}(g(u))_x\|_{L^2} \leq C\|g(u)\|_{H^{s-2k+1}} \leq C(\|u\|_{\infty})\|u\|_{H^s} \leq C\|u\|_{H^s},
		\end{equation*}
		by Lemma \ref{composition}, and the same for $(**_4)$.
	\end{itemize}
	Combining all these estimates, we get the desired estimate \eqref{ee4}. Thus, we have 
	\begin{equation*}
		\frac{d}{dt}\|u\|_{H^s}^2 \le C(t)\|u\|_{H^s}^2
	\end{equation*}
	which yields, due to Gronwall's lemma
	\begin{equation*}
		\|u(t)\|_{H^s}^2 \le \exp\Big(\int^t C(r)dr\Big)\|u_0\|_{H^s}^2
	\end{equation*}
	which concludes the global existence.
\end{proof}

\medskip
\section*{Acknowledgement}
Part of this work is completed during the visits of the second author to the University of Graz and the third author to Sabanc\i\,  University, and hospitality of the universities is greatly acknowledged. This work is supported by NAWI Graz.

%\nocite{*}

\end{document}